\theoremstyle{plain}
\newtheorem{Theorem}{Theorem}[section]
\newtheorem*{Theorem*}{Theorem}
\newtheorem*{Proposition*}{Proposition}
\newtheorem{Proposition}[Theorem]{Proposition}
\newtheorem{Lemma}[Theorem]{Lemma}
\newtheorem{Corollary}[Theorem]{Corollary}
\newtheorem*{Problem*}{Problem}
\theoremstyle{definition}
\newtheorem*{Example*}{Example}
\newtheorem*{Definition*}{Definition}
\theoremstyle{remark}
\newtheorem*{Remark}{Remark}
\newcommand{\CC}{\mathbb{C}}
\newcommand{\QQ}{\mathbb{Q}}
\newcommand{\FF}{\mathbb{F}}
\newcommand{\ZZ}{\mathbb{Z}}
\newcommand{\NN}{\mathbb{N}}
\newcommand{\KK}{\mathbb{K}}
\newcommand{\Fp}{\mathbb{F}_p}
\newcommand{\Mod}{\mathrm{mod} \ }
\DeclareMathOperator{\res}{Res}
\numberwithin{equation}{section} 
\newcommand\numberthis{\addtocounter{equation}{1}\tag{\theequation}} 
\title{Graded character rings of finite groups}
\author{B\'eatrice I. Chetard}
\date{\today}
\begin{document}

	\begin{abstract}
	Let $G$ be a finite group. The ring $R_\KK(G)$ of virtual characters of $G$ over the field $\KK$ is a $\lambda$-ring; as such, it is equipped with the so-called $\Gamma$-filtration, first defined by Grothendieck. We explore the properties of the associated graded ring $R^*_\KK(G)$, and present a set of tools to compute it through detailed examples. In particular, we use the functoriality of $R^*_\KK(-)$, and the topological properties of the $\Gamma$-filtration, to explicitly determine the graded character ring over the complex numbers of every group of order at most $8$, as well as that of dihedral groups of order $2p$ for $p$ prime.
	\end{abstract}

	\maketitle

	\section{Introduction}
Let $G$ be a finite group and $\KK$ a field of characteristic zero. The character ring $R_\KK(G)$ of $G$ is the abelian group generated by the irreducible characters of $G$ over $\KK$, which has a ring structure coming from the tensor product of representations. The ring $R_\KK(G)$, with the operations $\{\lambda^n: R_\KK(G) \to R_\KK(G)\}$ induced by exterior powers of representations, together satisfy the axioms of a $\lambda$-ring. \par
Grothendieck used the theory of $\lambda$-rings in the 1960s, to provide a categorical framework for the Riemann-Roch theorem (see \cite{berthelot-sga6}). With each $\lambda$-ring $R$, he associated a filtration (hereafter the Grothendieck filtration, or $\Gamma$-filtration); the associated graded ring $gr^* R$ is equipped with so-called algebraic Chern classes, which satisfy the properties of the eponymous construction in algebraic topology. To underline the importance of this construction, let us mention that, when $X$ is a smooth algebraic variety (say, over the complex numbers), there is an isomorphism
\[
	gr^*K(X) \otimes \QQ \cong CH^*X \otimes \QQ,
\]
where $CH^*X$ is the Chow ring of $X$ and $K(X)$ is the Grothendieck group of algebraic vector bundles over $X$ (see for example \cite[Ex. 15.2.16]{fulton-intersection}). If $X$ is a reasonable topological space and $K(X)$ is, this time, its topological $K$-theory, then
\[
	gr^*K(X) \otimes \QQ \cong H^{2*}(X,\QQ),
\]
where $H^{2*}(X,\QQ)$ is the even part of the singular cohomology of $X$ (see \cite[Prop. 3.2.7]{atiyah-k-theory}). Both of these isomorphisms are compatible with Chern classes.\par
Character rings are natural examples of $\lambda$-rings; in the sequel we write $R^*_\KK(G)$ for $gr^*(R_\KK(G))$. There only are a few examples of explicit computations of $R_\KK^*(G)$ in the literature: the first paper on the subject is a 2001 preprint by Beauville (\cite{beauville}), who showed that for a complex connected reductive group $G$, the graded ring $R_\CC^*(G)\otimes \QQ$ is simply described in terms of a maximal torus and its Weyl group.\par
It should be noted that in all of the above results, the graded ring is tensored with the rational field. However, we show in \Cref{R*G_G-torsion} that when $G$ is a finite group, the ring $R^*_\KK(G)$ is torsion, so that $R^*_\KK(G) \otimes \QQ$ is zero in positive degree. Computing $R^*_\KK(G)$ is harder but not hopeless, as is shown by Guillot and Min\'a\v{c} in \cite{guillot-minac}, where they determine explicitly $R^*_\CC(G)\otimes \FF_2$ for some $2$-groups. The present paper extends their work, by providing more explicit examples of graded character rings on the one hand, and by presenting general results about their structure and behaviour on the other hand.\par
The theory of graded character ring is intricate, and studying even the most elementary groups can lead to remarkable results. For instance, the following computation (presented below as \Cref{gradedrepring_Cpk}), which extends \cite[Prop. 3.11]{guillot-minac}, shows that there is no Künneth formula for $R^*_\CC(G)$:
\begin{Theorem}
Let $C_p$ be a cyclic group of prime order $p$. Then:
\[
	R^*_\CC(C_p^k) = \frac{\ZZ[x_1,\cdots,x_k]}{(px_i, x_i^p x_j - x_ix_j^p)}.
\]
\end{Theorem}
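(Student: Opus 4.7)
The plan is to construct a surjection $\phi : T \to R^*_\CC(C_p^k)$ from the proposed presentation
\[
T := \ZZ[x_1, \ldots, x_k] / (px_i,\ x_i^p x_j - x_i x_j^p),
\]
and to prove injectivity by restriction to all cyclic subgroups of order $p$. Writing $u_i := t_i - 1 \in R_\CC(C_p^k) = \ZZ[t_1, \ldots, t_k]/(t_i^p - 1)$, the augmentation ideal is $I = (u_1, \ldots, u_k)$, and I set $x_i := [u_i] \in I/I^2$. First, $(1+u_i)^p = 1$ gives $pu_i \in I^2$ so $px_i = 0$; then since $p \mid \binom{p}{s}$ for $1 \le s \le p-1$ and, by the preceding relation together with multiplicativity, $p$ annihilates every positive-degree piece of $R^*_\CC(C_p^k)$, the identity
\[
u_j\bigl((1+u_i)^p - 1\bigr) - u_i\bigl((1+u_j)^p - 1\bigr) = \sum_{s=2}^{p}\binom{p}{s}(u_i^s u_j - u_i u_j^s) = 0
\]
has only its $s = p$ term surviving in the associated graded, giving $x_i^p x_j = x_i x_j^p$ in degree $p+1$. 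For surjectivity, every irreducible character of $C_p^k$ is one-dimensional, so $\gamma^n(\chi - 1) = 0$ for $n \ge 2$, and multiplicativity of $\gamma_t$ then yields $\gamma^n(I) \subseteq I^n$ for all $n$; combined with the trivial reverse inclusion, this gives $\Gamma^n = I^n$, so $R^*_\CC(C_p^k)$ is generated in degree one by the classes $x_i$, and $\phi$ is surjective.

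For injectivity I exploit naturality of the $\Gamma$-filtration under group inclusions. View $V := \FF_p^k = C_p^k$. For each one-dimensional $\FF_p$-subspace $L \subset V$ with chosen generator $v = (v_1, \ldots, v_k) \in L \setminus 0$, the inclusion $L \hookrightarrow C_p^k$ induces a graded ring map $\rho_L : R^*_\CC(C_p^k) \to R^*_\CC(L) = \FF_p[y_L]$ (where the target is the $k=1$ case, a direct computation), and $\rho_L(x_i) = v_i y_L$ since $t_i$ restricts to $(1+u)^{v_i}$. The composition $\Phi := \bigl(\prod_L \rho_L\bigr) \circ \phi$ sends $x^a$ to $(v_L^a\, y_L^{|a|})_L$. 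A short combinatorial check shows that two monomials of fixed degree $n$ are equivalent in $T$ iff they have the same support $S$ and agreeing exponent residues modulo $p-1$ on $S$: writing $a_i = a^*_i + (p-1)m_i$ with $a^*_i \in \{1, \ldots, p-1\}$ on $S$, the generating move becomes $(m_i, m_j) \mapsto (m_i - 1, m_j + 1)$ (the constraint $a_i \ge p$ just reads $m_i \ge 1$), which generates all redistributions of the fixed sum $\sum m_i$. Now pick canonical representatives $a^* \in \{0, 1, \ldots, p-1\}^k$ for each class. If $f = \sum c_{[a]} [x^a] \in T_n$ lies in $\ker \Phi$, then the polynomial $P(v) = \sum c_{[a]} v^{a^*}$ vanishes on $V \setminus 0$, hence on all of $V$ (since $P(0) = 0$ for $n \ge 1$), hence vanishes in the function ring $\FF_p[v_1, \ldots, v_k]/(v_i^p - v_i)$. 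But distinct canonical $a^*$ give distinct standard monomial basis elements of this function ring, forcing all $c_{[a]} = 0$. Hence $\Phi$, and therefore $\phi$, is injective in every positive degree.

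The most delicate step is the combinatorial identification of equivalence classes in $T_n$, in particular verifying that the inequality $a_i \ge p$ in the generating move does not obstruct connecting two monomials with matching invariants; once this is handled, the injectivity of $\Phi$ reduces to the classical fact that distinct reduced monomials in $\FF_p[v_1, \ldots, v_k]/(v_i^p - v_i)$ are linearly independent as functions on $\FF_p^k$.
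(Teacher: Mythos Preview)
Your overall strategy coincides with the paper's: derive the relations from $(1+u_i)^p=1$, then rule out further relations by restricting to every cyclic subgroup $\langle v\rangle\subset C_p^k$ and reducing to a linear-independence statement for polynomial functions on $\FF_p^k$. The packaging differs only cosmetically---you invoke the reduced-monomial basis of the function ring $\FF_p[v_1,\dots,v_k]/(v_i^p-v_i)$, while the paper carries out an equivalent inductive Vandermonde argument; you choose canonical representatives $a^*\in\{0,\dots,p-1\}^k$, while the paper normalises by pushing all excess into the first nonzero exponent. Your explicit identification $\Gamma^n=I^n$ is a nice addition that the paper leaves implicit.

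There is, however, a genuine gap in your derivation of the relation $x_i^p x_j = x_i x_j^p$. From
\[
\sum_{s=2}^{p}\binom{p}{s}\bigl(u_i^s u_j - u_i u_j^s\bigr)=0
\]
you want the sum of the terms with $2\le s\le p-1$ to lie in $I^{p+2}$. Your justification, ``$p$ annihilates every positive-degree piece'', only says $pI^m\subseteq I^{m+1}$; applied to the $s$-th term this places it in $I^{s+2}$, not $I^{p+2}$. The paper sidesteps the issue by first writing $u_i^p = p\,u_i\bigl(-1+\phi(u_i)\bigr)$ with $\phi(0)=0$, and then observing
\[
u_i^p u_j\bigl(-1+\phi(u_j)\bigr)=p\,u_iu_j\bigl(-1+\phi(u_i)\bigr)\bigl(-1+\phi(u_j)\bigr)=u_iu_j^p\bigl(-1+\phi(u_i)\bigr),
\]
whence $u_i^p u_j - u_i u_j^p = u_i^p u_j\,\phi(u_j)-u_iu_j^p\,\phi(u_i)\in I^{p+2}$ in one step. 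Your identity can be pushed through as well, but it needs a bootstrap such as iterating $pI^m\subseteq I^{m+p-1}+pI^{m+1}$ together with the fact that $I^{p+2}$ is $I$-adically closed in the Noetherian ring $R(G)$; as written, the step is not justified.
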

This makes determining graded character rings of some "easy" groups surprisingly difficult, a consequence that is in turn illustrated by the rather sophisticated computation of $R^*_\CC(C_4\times C_4)$, the very last one that we present in this paper. It is part of our endeavour to compute the graded character rings of abelian $p$-groups, which are of special interest: we show in \Cref{GH_coprime} that if $G, H$ are of coprime orders, the Künneth formula does hold for $R^*_\CC(G\times H)$. This reduces the computation of graded character rings of abelian groups, to those of abelian $p$-groups. On that topic, we adapt the main theorem of \cite{quillen} to show that for an abelian group $G$ and for each prime $p$, there is an explicit, surjective morphism:
\[
	R_\CC^*(G)\otimes \Fp \to gr_\bullet \Fp G,
\]
where $gr_\bullet \Fp G$ is the graded ring associated to the filtration by powers of the augmentation ideal of $\Fp G$. In particular, the following result is a direct corollary of \Cref{theorem_quillenrelations}:
\begin{Theorem}
	Let $G$ be an abelian $p$-group of the form $C_{p^{i_1}}\times \cdots \times C_{p^{i_n}}$. Then $R^*_\CC(G)$ is generated by elements $x_1, \ldots, x_n$ of degree 1 such that any monomial in any relation between these in $R^*_\CC(G)\otimes
\Fp$ features some $x_k^{p^{i_k}}$, for some index $k$.
\end{Theorem}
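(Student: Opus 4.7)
The plan is to combine Theorem~\ref{theorem_quillenrelations} with an explicit description of the target ring $gr_\bullet \FF_p G$ for an abelian $p$-group, then read off both the generators and the structure of the relations.

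First I would compute $gr_\bullet \FF_p G$ directly. Writing $G = C_{p^{i_1}}\times\cdots\times C_{p^{i_n}}$ with generators $g_1,\ldots,g_n$ and setting $y_k := g_k - 1$ in $\FF_p G$, the identity $(g_k - 1)^{p^{i_k}} = g_k^{p^{i_k}} - 1 = 0$ (valid in characteristic $p$) gives an isomorphism $\FF_p G \cong \FF_p[y_1,\ldots,y_n]/(y_k^{p^{i_k}}: 1\le k\le n)$. This truncated polynomial ring is already graded by total $y$-degree, and this grading coincides with the augmentation-ideal filtration since the augmentation ideal is $(y_1,\ldots,y_n)$. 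Therefore
\[
	gr_\bullet \FF_p G \cong \FF_p[x_1,\ldots,x_n]/(x_k^{p^{i_k}}: 1\le k\le n),
\]
with each $x_k$ placed in degree~$1$.

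Next, on the $R_\CC^*(G)$ side, I would note that since $G$ is abelian, $R_\CC(G) = \ZZ[\hat G]$ is generated as a ring by the one-dimensional characters $\chi_1,\ldots,\chi_n$ dual to the chosen $g_k$. A standard fact from $\lambda$-ring theory ensures that when a $\lambda$-ring is generated by rank-one elements, its $\Gamma$-graded ring is generated in degree~$1$ by their first Chern classes. Hence $R_\CC^*(G)$ is generated by $x_k := c_1(\chi_k)$, and by construction the Quillen-style surjection of Theorem~\ref{theorem_quillenrelations} sends the mod-$p$ reduction of $x_k$ to the degree-$1$ generator $x_k$ of $gr_\bullet \FF_p G$ identified above. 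Consequently, any relation $f(x_1,\ldots,x_n)=0$ in $R_\CC^*(G)\otimes\FF_p$ corresponds to a polynomial $f\in\FF_p[x_1,\ldots,x_n]$ in the kernel of the composition
\[
	\FF_p[x_1,\ldots,x_n]\twoheadrightarrow R_\CC^*(G)\otimes\FF_p\twoheadrightarrow gr_\bullet \FF_p G,
\]
which is precisely the monomial ideal $(x_k^{p^{i_k}}:1\le k\le n)$. Since a polynomial lies in a monomial ideal if and only if each of its monomials does, every monomial appearing in $f$ must be divisible by some $x_k^{p^{i_k}}$, which is exactly the stated conclusion.

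The main obstacle I anticipate is checking that the morphism of Theorem~\ref{theorem_quillenrelations} really sends $c_1(\chi_k)$ to the class of $y_k = g_k-1$ in $gr_\bullet \FF_p G$; this should follow from unpacking its construction, but is the point that requires the most care. A secondary subtlety is justifying that $R_\CC^*(G)$ itself (not merely its mod-$p$ reduction) is generated in degree~$1$ by the $x_k$, which for abelian groups follows cleanly from the rank-one generation of $R_\CC(G)$ together with the standard $\lambda$-ring fact quoted above.
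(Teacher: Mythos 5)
Your proposal is correct and follows essentially the paper's own route: the statement is deduced exactly as a corollary of Theorem~\ref{theorem_quillenrelations}, with degree-one generation coming from the fact that $R_\CC(G)$ is generated by line elements (\Cref{R*G_generators}) and the relation statement coming from the kernel of the induced map to the truncated polynomial ring being the monomial ideal $(x_k^{p^{i_k}})$. Your direct identification $gr_\bullet \FF_p G \cong \FF_p[x_1,\ldots,x_n]/(x_k^{p^{i_k}})$ via $(g_k-1)^{p^{i_k}}=0$ is a harmless (and more elementary) re-derivation of what Theorem~\ref{theorem_quillenrelations} already packages, including where $c_1(\rho_k)$ is sent, so the point you flag as delicate is already settled by that theorem.
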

Again, this is illustrated by the example of $C_4 \times C_4$:
\begin{Theorem} The graded ring of $C_4 \times C_4$ is given by
\[
	R^*_\CC(C_4\times C_4) = \frac{\ZZ[x,y]}{(4x,4y, 2x^2y + 2xy^2, x^4y^2 - x^2y^4)},
\]
therefore
\[
	R^*_\CC(C_4 \times C_4)\otimes \FF_2 = \frac{\FF_2[x,y]}{(x^4y^2 + x^2y^4)}.
\]
\end{Theorem}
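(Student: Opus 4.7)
The starting point is the Quillen-adapted theorem stated just above: $R^*_\CC(C_4 \times C_4)$ is generated in degree one by two classes $x, y$, one for each cyclic factor, and every monomial in any relation of $R^*_\CC(C_4\times C_4) \otimes \FF_2$ must contain $x^4$ or $y^4$. The plan is to exhibit a concrete set of relations, verify each holds, and then show by a dimension count modulo~$2$ that the resulting presentation is complete.

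The relations $4x = 4y = 0$ follow from functoriality: the two projections $C_4 \times C_4 \twoheadrightarrow C_4$ induce ring maps $R^*_\CC(C_4) \to R^*_\CC(C_4 \times C_4)$ sending the degree-one generator to $x$ or $y$, and we invoke the previously established $R^*_\CC(C_4) = \ZZ[x]/(4x)$. Next, $2(x^2 y + xy^2) = 0$ is derived by filtration bookkeeping: for abelian groups the $\Gamma$-filtration equals the augmentation ideal filtration, so $(1+x)^4 = 1$ reads as $4x + 6x^2 + 4x^3 + x^4 = 0$ and gives $4x \equiv -6x^2 \pmod{\Gamma^3}$; multiplying by $y$ yields $4xy \equiv -6x^2 y \pmod{\Gamma^4}$, while the symmetric argument from $(1+y)^4 = 1$ produces $4xy \equiv -6 xy^2 \pmod{\Gamma^4}$. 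Comparing gives $6(x^2 y - xy^2) \in \Gamma^4$, and combined with the automatic $4(x^2 y - xy^2) \in \Gamma^4$ (since $4x \in \Gamma^2$), we obtain $2(x^2 y - xy^2) = 0$ in $\text{gr}_3$, equivalent to $2(x^2 y + xy^2) = 0$ since $4xy^2 = 0$.

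The principal obstacle is the top relation $x^4 y^2 = x^2 y^4$. A direct manipulation in $R_\CC(C_4\times C_4)$ using the expansions of $x^4$ and $y^4$ yields
\[
x^4 y^2 - x^2 y^4 \;=\; 4 \, xy (x-y)(1 - xy),
\]
so what must be shown is that this element, \emph{a priori} only in $\Gamma^4$, in fact lies in $\Gamma^7$. The engine driving the proof is the master identity
\[
xy(x-y) \bigl[\, 6 + 4(x + y) + (x^2 + xy + y^2) \,\bigr] \;=\; 0,
\]
obtained by subtracting the two expansions $(1+x)^4 = 1$ and $(1+y)^4 = 1$ after multiplication by $y$ and $x$ respectively. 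One iteratively uses this identity to promote $4xy$, $4xy(x-y)$, $xy(x-y)(x+y)$, and their partners into successively higher pieces of the $\Gamma$-filtration, keeping careful track of the $4$-torsion inherited from $4\overline{x} = 4\overline{y} = 0$. Organising this chain of substitutions so that $4 xy(x-y)(1-xy)$ lands in $\Gamma^7$ is delicate but mechanical, and is the longest calculation of the argument.

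Completeness is then checked modulo~$2$: combining the Quillen surjection
\[
R^*_\CC(C_4 \times C_4) \otimes \FF_2 \;\twoheadrightarrow\; \FF_2[x, y]/(x^4, y^4)
\]
with the constraint that every monomial of any additional mod-$2$ relation must be divisible by $x^4$ or $y^4$, and with the now-established relation $x^4 y^2 + x^2 y^4 = 0$, a degree-by-degree count pins down $R^*_\CC(C_4 \times C_4) \otimes \FF_2 = \FF_2[x, y]/(x^4 y^2 + x^2 y^4)$. Since every positive-degree piece of $R^*_\CC(C_4 \times C_4)$ is $2$-primary, this identification lifts, together with the integral relations $4x = 4y = 0$ and $2(x^2 y + xy^2) = 0$, to the claimed presentation over $\ZZ$.
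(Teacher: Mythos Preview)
Your derivation of the relations is sound (the identities $X^4Y^2 - X^2Y^4 = 4XY(X-Y)(1-XY)$ and $XY(X-Y)[6+4(X+Y)+X^2+XY+Y^2]=0$ are correct and can indeed be chained to push the former into $\Gamma^7$), and this part parallels the paper, which works instead with $X^6Y-XY^6$ but to the same effect.

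The completeness argument, however, has a genuine gap. The Quillen surjection $R^*(G)\otimes\FF_2 \twoheadrightarrow \FF_2[x,y]/(x^4,y^4)$ only gives a \emph{lower} bound on $\dim_{\FF_2} R^n(G)\otimes\FF_2$; it says the kernel of $\FF_2[x,y]\to R^*(G)\otimes\FF_2$ lies in the monomial ideal $(x^4,y^4)$, but this does not exclude further relations such as $x^4y=0$ in degree~$5$, or $x^5y+x^4y^2=0$ in degree~$6$, etc. There is no ``degree-by-degree count'' available from this data alone that forces the kernel to be exactly $(x^4y^2+x^2y^4)$. The paper closes this gap by a different method: it takes a putative extra relation $\sum a_i x^{n-i}y^i=0$, restricts to the three cyclic subgroups $\ZZ/4\times 1$, $1\times\ZZ/4$, and the diagonal $\langle(1,1)\rangle$ to constrain the $a_i$, then evaluates characters at $(1,2)$ and $(2,1)$ and compares $2$-adic valuations (the continuity technique of \S\ref{continuity}) to force $a_1,a_{n-1}$ into restricted residues, and finally applies the automorphism $y\mapsto -y$ to kill the last surviving possibility.

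Your final lifting step is also incomplete: knowing $R^*(G)\otimes\FF_2$ together with $4x=4y=0$ and $2(x^2y+xy^2)=0$ does not by itself determine the integral ring, since one must still verify that monomials like $xy$, $x^2y$, $x^3y^2$ genuinely have additive order~$4$ rather than~$2$. The paper handles this simultaneously with the mod-$2$ argument, because the restrictions to cyclic $\ZZ/4$ subgroups already detect order~$4$.
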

(This is \Cref{Z4Z4} in the text.) Notice how relations modulo $2$ involve either $x^4$ or $y^4$ in each monomial. \par
Most of our results pertain to representations over the complex numbers. The situation can become much more complicated over other fields, as the following result shows:
\begin{Theorem}
	When $G$ is a $p$-group, the graded ring $R^*_\QQ(G)$ is concentrated in degrees multiple of $(p-1)$. In particular, for any prime $p$:
	\[
		R^*_\QQ(C_p) = \frac{\ZZ[x]}{(px)}
	\]
	where $|x| = p-1$.
\end{Theorem}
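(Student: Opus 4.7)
The plan is to treat the two assertions in sequence: the concentration in degrees divisible by $(p-1)$ follows from an Adams-operation argument, while the computation of $R^*_\QQ(C_p)$ proceeds by explicitly determining all $\gamma$-operations on the natural generator of the augmentation ideal.

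For the first part, I use the standard fact that in any special $\lambda$-ring the Adams operation $\psi^k$ preserves the $\Gamma$-filtration and acts on $F^n/F^{n+1}$ as multiplication by $k^n$. For characters, $\psi^k(\chi)(g) = \chi(g^k) = \sigma_k(\chi(g))$, where $\sigma_k \in \Gal(\QQ(\zeta_{|G|})/\QQ)$ is the automorphism $\zeta \mapsto \zeta^k$, so for $k$ coprime to $|G|$, $\psi^k$ agrees with $\sigma_k$ on characters and fixes $R_\QQ(G)$ pointwise. Combining these, for any $y \in R^n_\QQ(G)$ and $k$ coprime to $p$ one has $(k^n - 1)y = 0$. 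By \Cref{R*G_G-torsion} the graded ring is $|G|$-torsion, so $R^n_\QQ(G)$ is a finitely generated $p$-power-torsion abelian group; if nonzero, $R^n_\QQ(G) \otimes \FF_p \neq 0$, and the condition $k^n \equiv 1 \pmod p$ for all $k \in (\ZZ/p)^\times$ forces $(p-1) \mid n$.

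For $G = C_p$, the only nontrivial $\QQ$-irreducible representation is $\rho = \chi + \chi^2 + \cdots + \chi^{p-1}$ of dimension $p-1$, and a direct character calculation gives $R_\QQ(C_p) = \ZZ[x]/(x^2 + px)$ with $x = \rho - (p-1)$, so $I = \ZZ x$ and $x^r = (-p)^{r-1} x$ for $r \geq 1$. In $R_\CC(C_p)$, the multiplicativity of $\gamma^t$ together with $\gamma^t(\ell - 1) = 1 + (\ell-1)t$ for a line $\ell$ gives $\gamma^t(x) = \prod_{i=1}^{p-1}(1 + (\chi^i - 1)t)$, a polynomial of degree $p-1$ in $t$; hence $\gamma^i(x) = 0$ for $i \geq p$, and each $\gamma^i(x)$, being Galois-invariant, lies in $R_\QQ(C_p)$. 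Its value at a nontrivial group element equals $e_i(\zeta-1,\ldots,\zeta^{p-1}-1)$, computed via $\prod_{i=1}^{p-1}(T + (\zeta^i - 1)) = ((T-1)^p - (-1)^p)/T$ to be $(-1)^i \binom{p}{i}$. Since $\gamma^i(x) \in I = \ZZ x$, one must have $\gamma^i(x) = c_i x$, and comparing values yields $c_i = (-1)^{i+1}\binom{p}{i}/p \in \ZZ$ for $1 \leq i \leq p-1$; in particular $c_{p-1} = (-1)^p$.

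Every generator of $F^n$ is then a product $\gamma^{i_1}(x)\cdots\gamma^{i_r}(x) = \bigl(\prod_j c_{i_j}\bigr)(-p)^{r-1} x$ with $i_j \in \{1,\ldots,p-1\}$ and $\sum i_j \geq n$. For $n = k(p-1)$ this forces $r \geq k$; the case $r = k$ requires every $i_j = p-1$, producing $\pm p^{k-1} x$, while $r > k$ contributes only multiples of $p^k x$. Hence $F^{k(p-1)} = (p^{k-1} x)$, whence $R^{k(p-1)} = p^{k-1}\ZZ x / p^k \ZZ x \cong \FF_p$, and combined with the concentration result $R^n = 0$ otherwise; the multiplication $x^k = (-p)^{k-1} x$ reads off as $R^*_\QQ(C_p) = \ZZ[X]/(pX)$ with $X$ the class of $\pm x$ in degree $p-1$. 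The subtle point is that the $\Gamma$-filtration on $R_\QQ(C_p)$ is strictly coarser than the $I$-adic one (already for $p=3$, $F^2 = I \supsetneq I^2 = (3x)$), so the upper bound $F^{k(p-1)} \subset (p^{k-1}x)$ genuinely requires the full list of $\gamma^i(x)$, and especially the vanishing $\gamma^i(x) = 0$ for $i \geq p$.
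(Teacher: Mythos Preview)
Your proof is correct. For the first assertion (concentration in degrees divisible by $p-1$), your Adams-operation argument is essentially identical to the paper's \Cref{prop_gradedringQQ}: both combine $\psi^k \equiv k^n$ on $\Gamma^n/\Gamma^{n+1}$ with $\psi^k = \mathrm{id}$ on $R_\QQ(G)$ for $k$ prime to $|G|$, and then invoke the $|G|$-torsion of the graded pieces.

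For the computation of $R^*_\QQ(C_p)$, however, you take a genuinely different route from the paper. The paper argues top-down: it observes that $R^*_\QQ(C_p)$ is generated by the Chern classes of the regular representation $\chi$, invokes the concentration result to reduce to the single generator $c_{p-1}(\chi)$, and then uses the comparison map $R^*_\QQ(C_p) \to R^*_\CC(C_p)$ together with the already-computed $R^*_\CC(C_p) = \ZZ[t]/(pt)$ to check that $c_{p-1}(\chi)$ has order exactly $p$ and is non-nilpotent (its image being $(p-1)!\,c_1(\rho)$). Your approach is bottom-up and self-contained at the level of the filtration: you determine $R_\QQ(C_p) \cong \ZZ[x]/(x^2+px)$, compute every $\gamma^i(x)$ explicitly via the factorisation $\prod_{i=1}^{p-1}(T+\zeta^i-1) = ((T-1)^p-(-1)^p)/T$, and then read off $\Gamma^{k(p-1)} = p^{k-1}\ZZ x$ directly. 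The paper's method is shorter and illustrates the power of the restriction/comparison technique developed in \Cref{restrictions}; yours has the merit of making the $\Gamma$-filtration on $R_\QQ(C_p)$ completely explicit (and, as you note, exhibiting that it is strictly coarser than the $I$-adic filtration), without relying on the prior computation of $R^*_\CC(C_p)$.
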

This statement combines \Cref{prop_gradedringQQ} and \Cref{R*GCp_QQ}. It is striking that, in general, the graded character ring of a cyclic group over the rationals is not known. By contrast, the graded ring $R^*_\CC(G)$ for any cyclic group $G$ was computed in \cite{guillot-minac}, and is generated by an explicit class of degree $1$. \par
The above results provide a convincing argument in favour of graded character rings: their theory, of which we barely scratched the surface, is rich and complex.\par

Another, perhaps more concrete argument in their favour, is that they are a fine invariant of groups: they take into account the $\lambda$-ring structure of $R_\KK(G)$, which can distinguish non-isomorphic groups with the same character table such as the quaternion group $Q_8$ and the dihedral group $D_4$ of order $8$. In fact, \Cref{gradedring_D4} and \Cref{Q8_gradedring} add up to the following:
\begin{Theorem}
\[
	R^*_\CC(D_4) = \frac{\ZZ\left[x,y,b\right]}{(2x,2y,4b,xy,xb-yb)}
\]
with $|x| = |y| = 1$ and $|u| = 2$, and
\[
	R^*_\CC(Q_8) =\frac{\ZZ[x,y,u]}{(2x,2y,8u,x^2,y^2, xy-4u)}
\]
where $\vert x \vert = \vert y \vert = 1$ and $\vert u \vert = 2$.
\end{Theorem}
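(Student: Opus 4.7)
The plan is to treat the two groups in parallel, exploiting the fact that they share the same character table while differing in the determinant of their unique faithful two-dimensional irreducible representation. Write $\chi_0, \chi_1, \chi_2, \chi_3 = \chi_1\chi_2$ for the four linear characters, all factoring through the common abelianization $C_2 \times C_2$, and $\rho$ for the two-dimensional irreducible. A direct check on the standard matrix realisations shows that $\det\rho = \chi_3$ for $D_4$, whereas $\det\rho = \chi_0$ for $Q_8$. The second Chern class
\[
c_2(\rho) = \gamma^2(\rho - 2) = \det\rho + 1 - \rho
\]
is therefore $\chi_3 + 1 - \rho$ in the first case and $2 - \rho$ in the second, and this single discrepancy is what drives the entire difference between $R^*_\CC(D_4)$ and $R^*_\CC(Q_8)$.

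Set $x = \chi_1 - 1$ and $y = \chi_2 - 1$ in both cases; for $D_4$ set $b = c_2(\rho)$, and for $Q_8$ set $u = c_2(\rho) = 2 - \rho$. From $\chi_i^2 = 1$ we obtain $2x = -x^2 \in F^2$ and similarly for $y$, so that $2x = 2y = 0$ in $R^1$ for both groups. For $Q_8$ the identity $\rho - 2 = -u$ shows that $\rho - 2 \in F^2$, so $\rho$ contributes no new class to $R^1_\CC(Q_8)$; by contrast, in $D_4$ the class of $\rho - 2$ in $R^1$ equals $x + y$, via $\rho - 2 \equiv c_1(\rho) = \chi_3 - 1 \pmod{F^2}$.

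All further relations come from the two identities read off from the character table,
\[
\chi_i \otimes \rho \cong \rho \quad (i=1,2,3), \qquad \rho \otimes \rho \cong \chi_0 + \chi_1 + \chi_2 + \chi_3,
\]
which translate to $x\rho = y\rho = 0$ and $\rho^2 = 4 + 2x + 2y + xy$ in $R_\CC(G)$. For $D_4$, expanding $xb = x(\chi_3 + 1 - \rho)$ using $\chi_1\chi_3 = \chi_2$ yields $xb = -xy$ and symmetrically $yb = -xy$; since $xb \in F^3$, this forces both $xy = 0$ in $R^2$ and $xb = yb$ in $R^3$. Substituting $4(\rho-2) = 2x + 2y + xy - (\rho-2)^2$ into $4b = 4(\chi_3 + 1 - \rho)$ gives $4b = 2x + 2y + 3xy + (\rho-2)^2$, which reduces modulo $F^3$ (using $2x \equiv -x^2$, $2y \equiv -y^2$, $(\rho-2)^2 \equiv x^2 + y^2$ and $xy \equiv 0$) to $4b \equiv 0$. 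For $Q_8$, substituting $\rho = 2 - u$ into $\rho^2 = 4 + 2x + 2y + xy$ gives $u^2 = 4u + 2x + 2y + xy$. Computing $xu = x(2-\rho) = 2x$ and observing that $xu \in F^3$ forces $2x \in F^3$, hence $x^2 = 0$ in $R^2$; symmetrically $y^2 = 0$. Reducing $u^2 = 4u + 2x + 2y + xy$ modulo $F^3$ then yields $xy + 4u = 0$ in $R^2$, and doubling, together with $4x, 4y, 2xy \in F^3$, gives $8u = 0$.

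The hard part is the converse: showing that the presentations above are exact, not merely quotients, of $R^*_\CC(D_4)$ and $R^*_\CC(Q_8)$. For this one establishes degree-by-degree lower bounds by restriction to well-chosen subgroups. Restricting to a Klein four-subgroup of $D_4$ reduces to $R^*_\CC(C_2 \times C_2) = \ZZ[x,y]/(2x,2y,x^2y - xy^2)$ computed earlier in the paper, which detects the classes $x$, $y$ (and their powers), while the independent generator $b$ is seen through the map to $H^4(BD_4; \ZZ)$; restricting to the cyclic subgroup $C_4 \subset Q_8$ realises $u$ as a nonzero class of degree $2$ in $R^*_\CC(C_4) = \ZZ[t]/(4t)$ with $|t|=1$, via $u|_{C_4} = 2 - \alpha - \alpha^{-1}$ for a generator $\alpha$ of $\widehat{C_4}$. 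Combined with the Chern class map into the (classical) mod-$2$ cohomology rings of $BD_4$ and $BQ_8$, this matches each graded piece against the proposed presentation and shows that no further relations are needed.
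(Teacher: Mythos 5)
Your derivation of the generators and relations is fine and is essentially the paper's argument in different clothing (you manipulate explicit lifts such as $C_2(\Delta)=\det\Delta+1-\Delta$ where the paper applies the total Chern class and the splitting principle). The genuine gap is in the completeness step, and for $Q_8$ it is fatal to your route. The decisive point there is that $u^n$ has additive order $8$ for every $n$, i.e.\ $4u^n=xyu^{n-1}\neq 0$ in $R^{2n}(Q_8)$, and neither of your proposed tools can see this. Every proper subgroup of $Q_8$ is cyclic, and $u$ restricts on each $C_4$ to the class of $(\alpha-1)(\alpha^{-1}-1)$, namely $3t^2$ in $\ZZ[t]/(4t)$, and on the centre to a class of order $2$; so $4u^n$ restricts to $0$ on every proper subgroup, and restriction can at best give order $4$. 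Mod-$2$ cohomology carries only mod-$2$ information, so it cannot distinguish order $8$ from order $4$ either; an integral class in $H^4(BQ_8;\ZZ)\cong\ZZ/8$ could in principle do it, but neither you nor the paper constructs a comparison map $R^*(G)\to H^{2*}(BG;\ZZ)$ compatible with the $\Gamma$-filtration, so this cannot simply be invoked. This is exactly why the paper develops the continuity machinery of its Section 5: an admissible approximation $\widetilde{\Gamma}^n$ spanned by the monomials $X^{\epsilon_1}Y^{\epsilon_2}U^k$, continuity of the evaluation maps $\phi_i,\phi_j,\phi_k,\phi_{-1}$ for the $2$-adic valuation, and a short congruence argument showing $4U^n\notin\Gamma^{2n+1}$ (the same method, or restriction to the three $C_4$'s, then handles $xu^n$, $yu^n$, $xu^n+yu^n$). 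Without some substitute for that argument, the $Q_8$ presentation is only shown to surject onto $R^*(Q_8)$.

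The $D_4$ half is also inaccurate as stated, though repairable by elementary means. Restricting to a single Klein four-subgroup does not detect both $x$ and $y$: on one of the two Klein subgroups $x$ restricts to $0$, and on the other $x$ and $y$ restrict to the same class, so you need both of them (and they still cannot rule out, e.g., relations involving $b$). Moreover the statement $2b^n\neq 0$ is invisible mod $2$, and your appeal to $H^4(BD_4;\ZZ)$ again rests on an unconstructed comparison map. The paper instead quotes the Guillot--Min\'a\v{c} computation of $R^*(D_4)\otimes\FF_2$ to exclude extra relations and nilpotence mod $2$, and restricts to $C_4=\langle r\rangle$, where $b\mapsto 3t^2$, to get that each power $b^i$ has order exactly $4$; alternatively one can check by hand that the combined restriction to the two Klein subgroups and to $C_4$ is injective on each graded piece of the candidate ring. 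Either way, the "no further relations" half has to be carried out explicitly; your proposal asserts it rather than proves it, and for $Q_8$ the asserted tools cannot prove it.
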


This paper is organized as follows: the main definitions are introduced in \Cref{definitions}; each of sections $3$ to $6$ is focused on a different computational tool. We show in \Cref{cyclicgroups} two elementary computations, concerning cyclic groups over any algebraically closed $\KK$ (after \cite{guillot-minac}), and over the rationals, and we will then restrict ourselves to the case $\KK = \CC$. In \Cref{restrictions}, we put the cyclic group example to good use: we show that restriction of characters is a well-defined homomorphism and apply it to elementary abelian groups as well as some dihedral groups. In \Cref{quillen}, using a result of Quillen in \cite{quillen}, we construct the aforementioned morphism $R_\CC^*(G)\otimes \Fp \to gr_\bullet \Fp G$. In \Cref{continuity}, we look at the continuity of evaluation of characters with respect to the topology induced by the Grothendieck filtration, and the $p$-adic topology. We apply our results to graded character rings of $p$-groups: first in \Cref{continuity} for the quaternion group of order $8$, and second in \Cref{abelian2groups} to some abelian $2$-groups.\par
In light of the functoriality of $R^*(-)$ and the existence of a restriction morphism, a natural question arising is whether induction of representations also induces a well-defined map between graded rings; in other words, whether $R^*(-)$ is a Mackey functor. This question will be the subject of an second paper, in which we show through an explicit example that $R^*(-)$ is not a Mackey functor, and explore a modified filtration and its associated graded ring.\par
Besides the aforementioned work of Guillot and Min\'{a}\v{c}, which lays much of the groundwork for this paper, here are our main references. All nontrivial statements of representation theory that we use can be found in \cite{serre}; the theory of $\lambda$-rings is outlined in a concise manner in \cite{atiyah-tall}, and the mechanics of the Grothendieck filtration and Chern characters are summed up in \cite{fulton-lang}.\par

\textbf{Acknowledgements.} I would like to thank both my Ph.D. advisors: Pierre Guillot for suggesting this problem in the first place, and for his kind guidance from the first to the last keystroke of this paper, and J\'{a}n Min\'{a}\v{c} for many an enthusiastic and encouraging discussion.

	\section{Definitions and first properties} \label{definitions}
We recall some facts about the Grothendieck filtration on $\lambda$-rings, in the context of character rings. A concise treatment of the basic facts about $\lambda$-rings can be found in \cite{atiyah-tall}. Let $G$ be a finite group, and let $\KK$ be a field of characteristic zero. The \textit{ring of virtual characters} (or character ring) $R_\KK(G)$ of $G$ is the augmented ring generated by irreducible characters of $G$ over $\KK$; the augmentation $\epsilon: R_\KK(G) \to \ZZ$ is the degree map. Note that, since $\KK$ has characteristic zero, representations up to isomorphism are determined by their characters. Thus $R(G)$ is also the Grothendieck ring on the category of $\KK G$-modules, and we use the terms "character" and "representation" interchangeably. For instance, if $\chi$ is a character of $G$, by "the $n$-th exterior power $\lambda^n(\chi)$ of $\chi$", we mean "the character associated to the $n$-th exterior power of the representation affording $\chi$". The maps $\{\lambda^n\}_n$ satisfy for all characters $\chi, \tau$:
\begin{enumerate}
	\item $\lambda^0(\chi) = 1$
	\item $\lambda^1(\chi) = \chi$
	\item $\lambda^k (\chi+\tau) = \sum_{i+j = k} \lambda^i(\chi)\lambda^j(\tau)$ \label{lambda_addition_formula}
\end{enumerate}
The addition formula above allows us to extend $\lambda^n$ to $R_\KK(G)$, by defining each $\lambda^n(-\chi)$ by the equation $\lambda^n(\chi+ (-\chi)) = 0$. We say that $R_\KK(G)$, together with the maps $\{ \lambda^n \}$, is a pre-$\lambda$-ring. Since the $\lambda$-operations also satisfy axioms \cite[\S 1 (12)-(14)]{atiyah-tall}, we see that $R_\KK(G)$ is a $\lambda$-ring. We define:
 \[
 	\lambda_T: \begin{cases} R(G) &\to 1 + T\cdot R(G)[[T]] \\
 		\rho &\mapsto 1 + \sum_{i = 1}^\infty \lambda^i (\rho)T^i
  	\end{cases}.
 \]
 Call $x$ a \textit{line element} if $\lambda_T(x) = 1 + xT$. Alternatively, $x$ is a line element whenever it is a one-dimensional representation of $G$.

\begin{Remark}
	In the terminology of \cite{atiyah-tall}, a ring with $\lambda$-operations satifying the first three axioms above is called a $\lambda$-ring, and the additional axioms make it a special $\lambda$-ring. These extra axioms describe in particular how $\lambda$-operations interact with the ring multiplication. As it turns out, they are equivalent to the so-called "splitting principle", stated below as \Cref{splittingprinciple}, and to which we refer in practice for calculations.
\end{Remark} 

For $x \in R_\KK(G)$ and $n\in \NN$, put
\[
	\gamma^n(x) = \lambda^n(x + n - 1),
\]
the $n$-th gamma operation. Let $I = \ker \epsilon$ be the augmentation ideal, and note that if $x \in I$ then $\gamma^n(x) \in I$. Let $\Gamma^n$ be the additive subgroup of $R_\KK(G)$ generated by the monomials
\[
	\gamma^{i_1}(x_1)\gamma^{i_2}(x_2)\cdots\gamma^{i_k}(x_k), \ x_i \in I, \  \sum_{j = 1}^{k} i_j \geq n.
\]
One can show that $\Gamma^0 = R(G)$, $\Gamma^1 = I$, and that each $\Gamma^n$ is a $\lambda$-ideal (see \cite[Prop. 4.1]{atiyah-tall}). Moreover, the $\Gamma$-filtration contains the $I$-adic filtration on $R_\KK(G)$, that is, $\Gamma^n \supseteq I^n$ for each $n$. These two filtrations contain the same topological information:
\begin{Proposition}[{\cite[Cor. 12.3]{atiyah-characters_cohomology}}] \label{gammatopology_coincides_idaictopology}
	The topology induced by the Grothendieck filtration coincides with the $I$-adic topology.
\end{Proposition}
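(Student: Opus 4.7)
The plan is to show the $\Gamma$-filtration and the $I$-adic filtration are cofinal in each other. Since $\Gamma^n \supseteq I^n$ has already been recorded (this follows from $\gamma^1(x) = x$ for $x \in I$), every $I$-adic neighbourhood of $0$ sits inside some $\Gamma^n$-neighbourhood, so the $I$-adic topology is at least as fine as the $\Gamma$-adic one. The substantive direction is the reverse: for each $n$, I must produce $m = m(n)$ with $\Gamma^m \subseteq I^n$.

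My main tool is the splitting principle. For an actual character $\chi$ of degree $d$, embed $R_\KK(G)$ in a $\lambda$-ring extension $R'$ in which $\chi = \ell_1 + \cdots + \ell_d$ decomposes as a sum of line elements $\ell_i$. Since $\gamma_T(\ell_i - 1) = 1 + (\ell_i - 1)T$ is linear in $T$, we have $\gamma^j(\ell_i - 1) = 0$ for $j \geq 2$, and the multiplicativity of $\gamma_T$ gives
\[
	\gamma^k(\chi - d) \;=\; e_k(\ell_1 - 1, \ldots, \ell_d - 1) \;\in\; (I_{R'})^k,
\]
the $k$-th elementary symmetric polynomial, which moreover vanishes for $k > d$. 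Combining this with the inversion formula $\gamma_T(-x) = \gamma_T(x)^{-1}$, a straightforward induction shows that for every $y \in I$ and every $i \geq 0$, $\gamma^i(y) \in (I_{R'})^i$. Consequently every generator of $\Gamma^m$ lies in $(I_{R'})^m \cap R_\KK(G)$.

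The remaining step, and the main obstacle, is to descend from the extension back to $R_\KK(G)$: to show that $(I_{R'})^m \cap R_\KK(G) \subseteq I^n$ for $m$ sufficiently large compared to $n$. This is genuinely nontrivial, as the inclusion $I^n \subseteq (I_{R'})^n \cap R_\KK(G)$ is in general strict; for instance in $R_\CC(S_3)$ one computes $\gamma^2(\rho - 2) = \sigma - \rho + 1$, which lies in $(I_{R'})^2$ but not in $I^2$. The descent exploits that $R_\KK(G)$ is a finitely generated $\ZZ$-module (hence Noetherian) and that the extension $R_\KK(G) \hookrightarrow R'$ is integral, together with an Artin--Rees type comparison lemma standard in $\lambda$-ring theory. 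The outcome is an explicit bound $m(n)$, linear in $n$ with slope controlled by the maximum degree of an irreducible character of $G$, such that $\Gamma^{m(n)} \subseteq I^n$, completing the proof that the two topologies coincide.
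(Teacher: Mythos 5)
The first thing to note is that the paper does not prove this statement at all: it is quoted from Atiyah (Cor.\ 12.3 of \emph{Characters and cohomology of finite groups}), so your write-up has to stand on its own — and it does not. The easy direction ($I^n\subseteq\Gamma^n$) and the splitting-principle computation $\gamma^k(\chi-d)=e_k(\ell_1-1,\dots,\ell_d-1)$ are fine, but they only yield $\Gamma^m\subseteq (I_{R'})^m\cap R_\KK(G)$, and the entire substance of the hard direction is then concentrated in the descent claim $(I_{R'})^m\cap R_\KK(G)\subseteq I^n$ for $m\gg n$, which you assert rather than prove. There is no ``Artin--Rees type comparison lemma standard in $\lambda$-ring theory'' one can simply invoke here: Artin--Rees applied to the Noetherian ring $R_\KK(G)$, the ideal $I$ and the module $R'$ controls $I^mR'\cap R_\KK(G)$, not $(I_{R'})^m\cap R_\KK(G)$, and $I_{R'}$ is genuinely larger than $IR'$ (the individual classes $\ell_i-1$ do not lie in $IR'$ — your own $S_3$ remark illustrates this). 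To bridge the two you would need additional facts about the splitting extension that you never state or verify: that $R'$ can be chosen module-finite over $R_\KK(G)$, and that each new line class $\ell-1$ is nilpotent modulo $IR'$ (for the Fulton--Lang construction this holds because $\ell$ satisfies a monic relation whose reduction mod $IR'$ is $(\ell-1)^d=0$), whence $(I_{R'})^M\subseteq IR'$ for some $M$ and only then Artin--Rees applies. The ``explicit bound $m(n)$, linear in $n$ with slope the maximal degree of an irreducible character'' is likewise conjured without argument. As written, the crux of the proposition is missing.

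The detour through $R'$ is also unnecessary, and avoiding it gives the standard (Atiyah-style) proof. Since $R_\KK(G)\hookrightarrow R'$ is injective, your computation already shows $\gamma^k(\chi-d)=0$ \emph{in} $R_\KK(G)$ for $k>d$ (alternatively, $\gamma_T(\chi-d)=\lambda_{T/(1-T)}(\chi)\cdot(1-T)^d$ is a polynomial of degree $\le d$ in $T$). The ideal $I$ is additively generated by the finitely many elements $x_j=\chi_j-d_j$, $\chi_j$ irreducible; using $\gamma_T(u+v)=\gamma_T(u)\gamma_T(v)$ and $\gamma_T(-u)=\gamma_T(u)^{-1}$, every generator $\gamma^{i_1}(y_1)\cdots\gamma^{i_k}(y_k)$ of $\Gamma^m$ is a $\ZZ$-linear combination of monomials in the $\gamma^a(x_j)$ with $1\le a\le N:=\max_j d_j$ and total weight at least $m$. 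Each such factor lies in $I$, so every nonzero monomial has at least $\lceil m/N\rceil$ factors, giving $\Gamma^m\subseteq I^{\lceil m/N\rceil}$. Together with $I^n\subseteq\Gamma^n$ this proves the two filtrations are mutually cofinal, with exactly the linear bound you guessed — but obtained inside $R_\KK(G)$, with no descent problem to solve.
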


Define the \textit{graded character ring of $G$} (with coefficients in $\KK$) as:
\[
	R^*_\KK(G) = \bigoplus_{i\geq 0}\Gamma^i/\Gamma^{i+1}.
\]
The definitions readily imply that $\Gamma^m\cdot\Gamma^n \subset \Gamma^{m+n}$, so this is indeed a graded ring. In the sequel, we simply write $R^*(G)$ whenever $\KK$ is clear from the context. Our aim is to compute examples of the graded ring $R^*(G)$ for some finite groups. \par
Determining generators for $R^*(G)$ is a completely straightforward process. For any $\rho \in R(G)$, let $C_n(\rho) = \gamma^n(\rho - \epsilon(\rho))$; we define the \textit{$n$-th algebraic Chern class} $c_n(\rho)$ of $\rho$ as the image of $C_n(\rho)$ by the quotient map $\Gamma^n(G) \to R^n(G)$.
Define 
 \[
 	c_T: \begin{cases} R(G) &\to 1 + T\cdot R^*(G)[[T]] \\
 		\rho &\mapsto 1 + \sum_{i = 1}^\infty c_i(\rho)T^i
  	\end{cases}.
 \]
We call $c_T(\rho)$ the \textit{total Chern class} of $\rho$. Note that if $x$ is a line element, then $c_T(x) = 1+c_1(x)T$.

\begin{Proposition}[{\cite[III.\S2]{fulton-lang}}] \label{chern_class_homomorphism}
	The total Chern class satisfies the axioms of a Chern class homomorphism as detailed in \cite[I.\S3]{fulton-lang}. In particular,
\begin{enumerate}[(i)]
	\item \label{chern_class_line_elements}If $\rho$ is the character of a representation of degree $n$, then $c_k(\rho) = 0$ for $k>n$.
	\item \label{chern_class_product} Whenever $\rho$ and $\sigma$ are line elements, we have $c_1(\rho\sigma) = c_1(\rho) + c_1(\sigma)$.
	\item \label{chern_class_addition}The map $c_T$ is a homomorphism, that is $c_T(x+y) = c_T(x)c_T(y)$. In particular 
	\[
		c_n(\rho+\sigma) = \sum_{i=0}^n c_i(\rho)c_{n-i}(\sigma).
	\]
\end{enumerate}
\end{Proposition}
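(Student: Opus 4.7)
The plan is to reduce each of the three listed properties to a short generating-function computation with $\gamma$-operations, and then to verify the remaining Fulton--Lang axioms by routine naturality checks. The linchpin I would first establish is the identity $\gamma_T(x-1) = 1 + (x-1)T$ for any line element $x$, which follows from $\lambda_T(x) = 1 + xT$ together with $\gamma_T(-1) = 1 - T$ by a direct expansion. Combined with the fact that $\gamma_T : (R,+) \to (1 + TR[[T]], \cdot)$ is a group homomorphism---an immediate consequence of the addition formula for $\lambda^n$ applied to $\gamma^n(y) = \lambda^n(y + n - 1)$---this preliminary lemma controls the rest of the argument.

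For (i), I would invoke the splitting principle referenced in the Remark above to pass to an extension in which $\rho$ of degree $n$ decomposes as $\rho = x_1 + \cdots + x_n$ with the $x_i$ line elements. The preliminary lemma then yields
\[
    \gamma_T(\rho - n) \;=\; \prod_{i=1}^n \gamma_T(x_i - 1) \;=\; \prod_{i=1}^n \bigl(1 + (x_i - 1)T\bigr),
\]
a polynomial of degree $n$ in $T$, so $\gamma^k(\rho - n) = 0$ for $k > n$ and hence $c_k(\rho) = 0$. Property (ii) is immediate once one notes that $\rho\sigma$ is again a line element, reducing the assertion to the observation that $(\rho\sigma - 1) - (\rho - 1) - (\sigma - 1) = (\rho - 1)(\sigma - 1)$ lies in $I^2 \subseteq \Gamma^2$. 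For (iii), the same homomorphism property of $\gamma_T$ gives, in $\Gamma^n$,
\[
    C_n(x + y) \;=\; \gamma^n\bigl((x - \epsilon(x)) + (y - \epsilon(y))\bigr) \;=\; \sum_{i+j=n} C_i(x)\, C_j(y),
\]
and reducing modulo $\Gamma^{n+1}$ produces both $c_T(x+y) = c_T(x)\, c_T(y)$ and the explicit addition formula for $c_n$.

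The main obstacle will be making (i) rigorous, since the decomposition $\rho = \sum x_i$ generally lives only in an extension $R_\KK(G) \subset R'$ and not in $R_\KK(G)$ itself. The standard remedy is to observe that $\gamma^k(\rho - n)$ is expressible as a universal integer polynomial in $\lambda^1(\rho), \ldots, \lambda^k(\rho)$, with coefficients independent of the ambient $\lambda$-ring; the vanishing established after splitting therefore descends to $R_\KK(G)$. Once (i)--(iii) are in place, the remaining axioms of a Chern class homomorphism from \cite[I.\S3]{fulton-lang}---chiefly naturality with respect to $\lambda$-ring morphisms---follow directly from the definitions of $\gamma^n$ and $\Gamma^n$.
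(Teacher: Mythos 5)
Your proposal is correct, but note that the paper does not actually prove this proposition: it is quoted from Fulton--Lang (III.\S2), and the only justification recorded in the text is the one-line observation for (ii), namely that $(\rho\sigma-1)-(\rho-1)-(\sigma-1)=(\rho-1)(\sigma-1)\in\Gamma^2$ --- which coincides exactly with your argument for (ii). What you add beyond the paper is a self-contained verification of (i) and (iii) via the $\gamma$-operation formalism (the identity $\gamma_T(x-1)=1+(x-1)T$ for line elements, multiplicativity of $\gamma_T$, and reduction modulo $\Gamma^{n+1}$ for the Whitney formula), and all of these steps are sound. Two small remarks. For (i), the splitting-principle detour and the accompanying worry about descending from $R'$ to $R_\KK(G)$ are unnecessary: since $\rho$ is an honest representation of degree $n$, $\lambda^k(\rho)=0$ for $k>n$ already in $R_\KK(G)$, so $\gamma_T(\rho-n)=\lambda_{T/(1-T)}(\rho)\cdot(1-T)^n$ is a polynomial of degree at most $n$ and $\gamma^k(\rho-n)=0$ outright; in any case the descent is automatic because $R_\KK(G)\subseteq R'$ is an extension of $\lambda$-rings, so an element vanishing in $R'$ vanishes in $R_\KK(G)$, and the ``universal polynomial'' remedy is not needed (you in fact prove more than required, vanishing of $\gamma^k(\rho-n)$ itself rather than merely of its class $c_k(\rho)$). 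Second, calling the multiplicativity of $\gamma_T$ ``immediate'' from the addition formula hides the short computation $\gamma^m(y)=\sum_{i\le m}\binom{m-1}{m-i}\lambda^i(y)$, equivalently $\gamma_T=\lambda_{T/(1-T)}$, which is what transports $\lambda_T(a+b)=\lambda_T(a)\lambda_T(b)$ to $\gamma_T$; this is standard but deserves the one line.
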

Note that (\ref{chern_class_product}) is seen by remarking that
\[
	\gamma^1(\rho\sigma - 1) - \left(\gamma^1(\rho-1) + \gamma^1(\sigma -1)\right) = \gamma^1(\rho-1)\gamma^1(\sigma-1) \in \Gamma^2.
\]

Much as it is the case for $\lambda$-operations, whenever we need to compute the Chern class of a product, we rely on the splitting principle below.
\begin{Proposition}[{\cite[III.\S1]{fulton-lang}}]\label{splittingprinciple}
	Given representations $\rho_1, \cdots, \rho_k$ of $G$ dimensions $d_1,\cdots,d_k$ respectively, there exists a $\lambda$-ring extension $R'$ of $R(G)$ such that $\Gamma^nR'\cap R(G) = \Gamma^nR(G)$ and each $\rho_i = x_{i,1} + \cdots+x_{i,d_i}$ is the sum of $d_i$ line elements in $R'$.
\end{Proposition}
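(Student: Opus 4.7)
The strategy is to follow the standard construction from \cite{fulton-lang}: build $R'$ as an explicit polynomial-type extension of $R(G)$ in which the chosen representations split, and then verify the $\lambda$-ring and filtration compatibility separately. By induction on $k$ it suffices to handle a single representation $\rho$ of dimension $d$; iterating the construction then gives the general statement.

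First I would form the ring
\[
    R' = R(G)[t_1, \ldots, t_d]/J,
\]
where $J$ is the ideal generated by the elements $e_j(t_1, \ldots, t_d) - \lambda^j(\rho)$ for $j = 1, \ldots, d$, with $e_j$ the $j$-th elementary symmetric polynomial. A standard argument (essentially that the $e_j$ form a regular sequence in $\ZZ[t_1, \ldots, t_d]$, and base change along $\ZZ \to R(G)$) shows that $R'$ is free of rank $d!$ as an $R(G)$-module, with a basis given by the monomials $t_1^{a_1} \cdots t_d^{a_d}$ with $0 \le a_i \le d - i$. In particular, $R(G) \hookrightarrow R'$.

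Next, I would declare each $t_i$ to be a line element and extend the $\lambda$-operations to $R'$ by imposing $\lambda_T(\sum \alpha_i) = \prod (1 + \alpha_i T)$ for any formal sum of line elements, combined with the addition formula for virtual elements. The defining relations cutting out $J$ say exactly that $\lambda_T(\rho) = \prod_{i=1}^{d}(1 + t_i T)$ in $R'[[T]]$, so these extended operations agree with the existing $\lambda$-structure on $R(G)$. Verification of the (special) $\lambda$-ring axioms on $R'$ reduces in this polynomial setup to symmetric-function identities in the $t_i$ holding in $\ZZ[t_1, \ldots, t_d]$, and so poses no real difficulty.

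The main obstacle is the equality $\Gamma^n R' \cap R(G) = \Gamma^n R(G)$. Since each $t_i$ is a line element, $t_i - 1 = \gamma^1(t_i-1) \in \Gamma^1 R'$, and I would re-express the monomial basis above in triangular form in terms of products $\gamma^{n_1}(t_{i_1}-1)\cdots\gamma^{n_s}(t_{i_s}-1)$. This yields a direct sum decomposition of $R'$ as an $R(G)$-module which is homogeneous with respect to the $\Gamma$-filtration, with the summand corresponding to the trivial multi-index equal to $R(G)$ itself. An element of $R(G)$ lying in $\Gamma^n R'$ then projects to this trivial summand and must already be a sum of products of $\gamma^{i_j}(x_j)$ with $\sum i_j \ge n$ for $x_j \in I(G)$; that is, it lies in $\Gamma^n R(G)$. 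The delicate point is checking that the triangular change of basis really is filtration-preserving, which amounts to bookkeeping for how the $\gamma$-operations interact with the relations defining $J$.
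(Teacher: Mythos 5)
First, note that the paper does not prove \Cref{splittingprinciple} at all: it is imported verbatim from \cite{fulton-lang}, III.\S1, so the relevant comparison is with the construction there. Your splitting algebra $R' = R(G)[t_1,\ldots,t_d]/J$ is essentially that construction done in one stroke (Fulton and Lang adjoin one line element at a time, $R[T]/\bigl(\sum_j(-1)^j\lambda^j(\rho)T^{d-j}\bigr)$, and iterate), and the freeness of $R'$ of rank $d!$ with the stated monomial basis, hence the injectivity of $R(G)\to R'$, is fine. So the skeleton is the right one; the problem is that the two places where you say the verification "poses no real difficulty" or "amounts to bookkeeping" are exactly where the entire content of the proposition lies, and neither is carried out.

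Concretely: (i) the $\lambda$-structure on $R'$ does not come for free. The ideal $J$ is not (obviously) a $\lambda$-ideal of $R(G)[t_1,\ldots,t_d]$ with the $t_i$ declared line elements, so you cannot simply push the operations to the quotient; and if instead you define $\lambda^n$ on $R'$ through your chosen $R(G)$-basis, the operations are well defined as maps, but the special $\lambda$-ring axioms do \emph{not} reduce to symmetric-function identities in $\ZZ[t_1,\ldots,t_d]$: products of basis monomials must be reduced modulo the relations $e_j(t)=\lambda^j(\rho)$ before your basis-dependent definition applies, and compatibility of the operations with exactly these relations is the point that has to be proved (this is why the cited source proceeds one line element at a time, using the positive structure/verification principle). (ii) For the filtration statement, the assertion that an element of $R(G)\cap\Gamma^nR'$ "projects to the trivial summand" inside $\Gamma^nR(G)$ needs the containment $\Gamma^nR'\subseteq\sum_m\Gamma^{\,n-w_m}R(G)\cdot b_m$, where $b_m$ runs over your basis rewritten as monomials in the $(t_i-1)$ and $w_m$ is the degree of $b_m$. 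The missing ingredient that makes this bookkeeping close is that the defining relations convert symmetric expressions in the $(t_i-1)$ into elements deep in the Grothendieck filtration of $R(G)$: since $\prod_i\bigl(1+(t_i-1)T\bigr)=\gamma_T(\rho-d)$, one has $e_j(t_1-1,\ldots,t_d-1)=\gamma^j(\rho-d)=C_j(\rho)\in\Gamma^jR(G)$, i.e.\ the elementary symmetric functions of the "Chern roots minus one" are the lifted Chern classes. Without proving this and propagating it through the reduction to the basis, the claim that the change of basis is filtration-preserving is unsupported. As it stands, your proposal is a correct plan that matches the standard proof, but it defers both of the genuinely hard verifications rather than giving them.
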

Thus for a character $\rho$ of degree $n$, we have, in the graded ring $gr^*R'$:
\[
	c_T(\rho) = c_T(x_1 + \cdots + x_n) = \prod_{i=1}^{n}(1+c_1(x_i)T).
\]
The graded ring $R^*(G)$ appears as a subring of $gr^*R'$, and we can recover $c_k(\rho)$ as the coefficient of $T^k$ in the above polynomial, that is, the symmetric polynomial of degree $k$ in the $n$ variables $c_1(x_1),\cdots,c_1(x_n)$. \par
As a first practical example of the splitting principle, consider the following computation. Recall that the determinant of a representation $\rho$ of $G$ of degree $n$ is defined as $\det(\rho) = \lambda^n(\rho)$. In particular, by the splitting principle, if we write $\rho = x_1 + \cdots + x_n$ then we have $\det \rho = \prod x_i$.

\begin{Lemma}\label{chern_class_determinant}
	For a representation $\rho$ of $G$, we have $c_1(\rho) = c_1(\det \rho)$.
\end{Lemma}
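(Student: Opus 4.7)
The plan is a direct application of the splitting principle (\Cref{splittingprinciple}) together with the two parts of \Cref{chern_class_homomorphism} already recorded, namely additivity of $c_T$ and additivity of $c_1$ on products of line elements.

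First I would pass to a $\lambda$-ring extension $R'$ of $R(G)$ in which $\rho = x_1 + \cdots + x_n$ is a sum of line elements $x_i$; this is allowed since the splitting principle guarantees that $\Gamma^n R' \cap R(G) = \Gamma^n R(G)$, so the computation carried out in $gr^* R'$ faithfully reflects the identity we want in $R^*(G)$. Working in $gr^* R'$, the multiplicativity of $c_T$ gives
\[
    c_T(\rho) = \prod_{i=1}^n (1 + c_1(x_i) T),
\]
and reading off the coefficient of $T$ yields $c_1(\rho) = \sum_{i=1}^n c_1(x_i)$.

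Next I would compute $\det \rho$. By the splitting principle, $\det \rho = \lambda^n(\rho) = x_1 x_2 \cdots x_n$, which is itself a line element (a product of line elements is a line element). Iterating part \eqref{chern_class_product} of \Cref{chern_class_homomorphism}, which states that $c_1$ turns products of line elements into sums, gives
\[
    c_1(\det \rho) = c_1(x_1 x_2 \cdots x_n) = \sum_{i=1}^n c_1(x_i).
\]
Comparing the two expressions yields $c_1(\rho) = c_1(\det \rho)$. There is no real obstacle here: once one is confident that the splitting principle may be invoked and that $\det \rho$ is the product of the virtual line elements, the lemma falls out immediately from the already-established formal properties of the first Chern class.
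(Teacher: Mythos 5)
Your proposal is correct and follows essentially the same route as the paper: split $\rho$ into line elements in an extension $R'$, read off $c_1(\rho)=\sum_i c_1(x_i)$ from the multiplicativity of $c_T$, and compare with $c_1(\det\rho)=c_1(\prod_i x_i)=\sum_i c_1(x_i)$ via the additivity of $c_1$ on products of line elements. No substantive difference from the paper's argument.
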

\begin{proof}
	Let $R'$ be an extension of $R(G)$ as in \Cref{splittingprinciple}. The ring $R'$ is a $\lambda$-ring, and (\ref{chern_class_product}) and (\ref{chern_class_addition}) of \Cref{chern_class_homomorphism} do apply in full generality. In $R'$, write $\rho = x_1+\cdots+x_n$ as a sum of line elements. Then, by \Cref{chern_class_homomorphism}(\ref{chern_class_addition}):
	\begin{align*}
		c_T(\rho) &= c_T(x_1 + \cdots + x_n) = \prod_{i = 1}^n c_T(x_i) \\
			&= \prod_{i=1}^n (1+c_1(x_i) T).
	\end{align*}
	The coefficient of $T$ is $c_1(\rho) = \sum_{i=1}^n c_1(x_i)$. On the other hand, by \Cref{chern_class_homomorphism}(\ref{chern_class_line_elements}):
	\[
		c_1(\det \rho) = c_1\left(\prod_{i=1}^n x_i\right) = \sum_{i=1}^n c_1(x_i) = c_1(\rho).
	\]
\end{proof}

Moreover, the splitting principle, together with property (\ref{chern_class_product}) in \Cref{chern_class_homomorphism}, imply that $c_k(\sigma\tau)$ is a polynomial in the Chern classes of $\sigma$ and $\tau$. As a direct consequence, we have:

\begin{Lemma} \label{R*G_generators}
	Let $\chi_1, \cdots, \chi_n$ be characters of representations of $G$ of degrees $d_1, \cdots, d_n$ respectively. If the $\chi_i$ generate $R(G)$ as a ring, then the classes $c_k(\chi_i)$ for $1 \leq k \leq d_i$ generate $R^*(G)$ as a ring.
\end{Lemma}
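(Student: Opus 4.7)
The plan is to prove the lemma in two stages: first, establish that $R^*(G)$ is generated as a ring by the Chern classes of arbitrary (virtual) characters; second, reduce these to the Chern classes of the specific generators $\chi_i$.

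For the first stage, I would unpack the definition of $\Gamma^n$: it is the additive subgroup of $R(G)$ generated by products $\gamma^{i_1}(y_1)\cdots \gamma^{i_k}(y_k)$ with each $y_j \in I$ and $\sum_j i_j \geq n$. Any $y_j \in I$ can be written as $\rho_j - \epsilon(\rho_j)$ for some virtual character $\rho_j$, so that $\gamma^{i_j}(y_j) = \gamma^{i_j}(\rho_j - \epsilon(\rho_j))$ is, by definition, a representative of $c_{i_j}(\rho_j)$ modulo $\Gamma^{i_j+1}$. Passing to $\Gamma^n/\Gamma^{n+1}$, the monomials with $\sum i_j > n$ vanish (their representatives lie in $\Gamma^{n+1}$), while those with $\sum i_j = n$ map precisely to the product $c_{i_1}(\rho_1)\cdots c_{i_k}(\rho_k)$. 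Hence $R^*(G)$ is generated as a ring by the collection $\{c_k(\rho) : \rho \in R(G),\ k \geq 1\}$.

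For the second stage, I would invoke \Cref{chern_class_homomorphism}(\ref{chern_class_addition}), which makes $c_T$ a homomorphism from $(R(G),+)$ to the multiplicative group $1 + T\cdot R^*(G)[[T]]$, together with the observation recorded immediately before the lemma that $c_k(\sigma\tau)$ is a polynomial in the Chern classes of $\sigma$ and $\tau$. These two facts together imply that whenever a virtual character $\rho$ is built from some characters $\tau_1,\ldots,\tau_m$ via the operations $+,-,\cdot$, each $c_k(\rho)$ is a polynomial in the $c_l(\tau_j)$; the case of negation is handled by $c_T(-\rho) = c_T(\rho)^{-1}$, whose degree-$k$ component is a polynomial in $c_1(\rho),\ldots,c_k(\rho)$ by the usual geometric-series inversion in $1 + T\cdot R^*(G)[[T]]$. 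Since the $\chi_i$ generate $R(G)$ as a ring, every virtual character is a $\ZZ$-polynomial in the $\chi_i$, so every $c_k(\rho)$ is a polynomial in the $c_l(\chi_i)$. Finally, \Cref{chern_class_homomorphism}(\ref{chern_class_line_elements}) gives $c_l(\chi_i) = 0$ for $l > d_i$, so the indices may be restricted to $1 \leq l \leq d_i$; combining with the first stage yields the claim.

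The main obstacle is a bookkeeping one: keeping track of how arbitrary $\ZZ$-polynomial expressions in the $\chi_i$ translate through $c_T$ into polynomial expressions in the $c_l(\chi_i)$. Once multiplicativity of $c_T$ and the polynomial formula for $c_k(\sigma\tau)$ are in hand this is purely formal, so no real technical difficulty is expected.
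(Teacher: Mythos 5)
Your proof is correct and follows essentially the same route as the paper: the paper's own (very terse) proof likewise observes that each $\Gamma^n$ is by definition generated by products of Chern classes of virtual characters, and then reduces to the $c_k(\chi_i)$ using the Whitney formula, the splitting-principle fact that $c_k(\sigma\tau)$ is polynomial in the Chern classes of $\sigma,\tau$, and the vanishing $c_l(\chi_i)=0$ for $l>d_i$. Your write-up just makes explicit the bookkeeping (including inversion of $c_T$ for negatives) that the paper leaves to the reader.
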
 
\begin{proof}
	By definition, each $\Gamma^n$ is generated by products of Chern classes of virtual characters of $G$. The result follows from the above discussion.
\end{proof}

We conclude this section with the following improvement on \cite[Lem. 3.2]{guillot-minac}:
\begin{Proposition} \label{R*G_G-torsion}
	The graded piece $R^n(G)$ is $\vert G \vert$-torsion for $n>0$.
\end{Proposition}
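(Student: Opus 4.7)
The plan is to exploit the key property of the regular representation $r_G \in R_\KK(G)$: for any virtual character $\rho$, one has $r_G \cdot \rho = \epsilon(\rho) \, r_G$. Equivalently, $r_G$ annihilates the augmentation ideal $I = \ker \epsilon = \Gamma^1$. This is standard (e.g.\ in \cite{serre}) and follows from the fact that tensoring the regular representation with any irreducible yields a multiple of the regular.

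Granted this, the proof should be a one-liner. Since $\epsilon(r_G) = |G|$, we can write
\[
    r_G = |G| + (r_G - |G|), \qquad r_G - |G| \in \Gamma^1.
\]
Now let $n \geq 1$ and take any $x \in \Gamma^n$. Because $\Gamma^n \subseteq \Gamma^1 = I$, we have $r_G \cdot x = 0$, hence
\[
    |G|\cdot x = -(r_G - |G|)\cdot x \in \Gamma^1 \cdot \Gamma^n \subseteq \Gamma^{n+1},
\]
where the last inclusion uses the multiplicativity of the $\Gamma$-filtration. Passing to the quotient $R^n(G) = \Gamma^n/\Gamma^{n+1}$, multiplication by $|G|$ acts as zero, which is exactly the desired torsion statement.

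There is no real obstacle here; the only thing to verify carefully is the annihilation property $r_G \cdot I = 0$, which one may either cite from standard character theory or re-derive in one line using $\langle r_G, \chi \rangle = \chi(1)$ for any character $\chi$. Note that the argument uses only $\Gamma^1 \cdot \Gamma^n \subseteq \Gamma^{n+1}$, so it works verbatim for the $\Gamma$-filtration rather than merely for the $I$-adic filtration; the improvement over \cite[Lem. 3.2]{guillot-minac} alluded to in the statement is presumably the sharper bound $|G|$ (as opposed to some power of $|G|$, or an exponent depending on $n$).
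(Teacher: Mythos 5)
Your proof is correct and follows essentially the same route as the paper: both use the fact that the regular character annihilates the augmentation ideal, then split it as $|G|$ plus an element of $\Gamma^1$ and invoke $\Gamma^1\cdot\Gamma^n\subseteq\Gamma^{n+1}$. No gaps; the paper merely phrases the annihilation step via the pointwise formula $\chi\cdot\rho(g)=\epsilon(\rho)|G|\,\delta_{1,g}$ instead of citing $r_G\cdot\rho=\epsilon(\rho)\,r_G$ directly.
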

\begin{proof}
	Consider the regular representation $\KK G$ of $G$, with character $\chi$, and let $\rho$ be any character of $G$. For any $g \in G$,
	\[
		\chi\cdot\rho(g) = (\epsilon(\rho)\cdot\vert G\vert )\delta_{1_G, g}.
	\]
	Pick a virtual character $\rho \in \Gamma^n$ for $n>0$, and write $\rho = \rho^+ - \rho^-$ with $\rho^+$, $\rho^- \in R^+(G)$ and $\epsilon(\rho^+) = \epsilon(\rho^-)$. Then  $\chi\cdot\rho^+ = \chi \cdot \rho^-$ and thus $\chi \cdot \rho = 0$. Looking modulo $\Gamma^{n+1}$, we obtain:
	\[
		0 = \chi\cdot\rho = (\chi - \vert G\vert)\rho + |G|\cdot\rho = |G|\cdot\rho \ (\Mod \Gamma^{n+1}),
	\]
	since $\chi-|G| \in I = \Gamma^1$.
\end{proof}

	\section{Computing from the definition: cyclic groups} \label{cyclicgroups}
	As introductory examples, we determine the graded character rings of some cyclic groups. In \Cref{gradedring_Cp}, we consider cyclic groups over an algebraically closed field: their graded character ring was computed in \cite{guillot-minac} and many of our subsequent examples will rely on it. For the sake of completeness, we reproduce here the calculation of Guillot and Min\'{a}\v{c}, which is an exercise in the definitions. \par
	In \Cref{prop_gradedringQQ}, we prove a surprising general result about graded character rings of $p$-groups over the rationals: the classical interplay between Adams operations and rationality (see \cite[Th. 13.29]{serre}), translates to a condition on the generators of $R^*_\QQ(G)$. We illustrate this statement in \Cref{R*GCp_QQ} with the computation of $R^*_\QQ(C_p)$, where $C_p$ is a cyclic group of prime order. This constitutes our only incursion outside the field of complex numbers; even the computation for cyclic groups of arbitrary order remains wide open over a general field. \par
	
	\begin{Proposition}[{\cite[Prop. 3.4]{guillot-minac}}]\label{gradedring_Cp}
		Let $C_N$ be the cyclic group of order $N$. Whenever $\KK$ is an algebraically closed field of characteristic prime to $N$,
		\[ 
			R^*_\KK(C_N)  = \frac{\ZZ\left[x\right]}{(Nx)} 
		\]
		with $x = c_1(\rho)$ for a one-dimensional representation $\rho$ of $C_N$ that generates $R_\KK(C_N)$.
	\end{Proposition}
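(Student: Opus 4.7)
The plan is first to identify the ring generators and Chern-class relations predicted by the general theory, and then to verify that there are no further relations by computing the Grothendieck filtration directly from the concrete ring structure of $R_\KK(C_N)$.

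For the setup, I would fix a faithful one-dimensional character $\rho$ of $C_N$, so that $R_\KK(C_N) = \ZZ[\rho]/(\rho^N - 1)$; writing $t = \rho - 1$ gives $R_\KK(C_N) \cong \ZZ[t]/((1+t)^N - 1)$ with augmentation ideal $I = (t)$. Since $\rho$ is a line element, $c_k(\rho) = 0$ for $k \geq 2$, and \Cref{R*G_generators} shows $R_\KK^*(C_N)$ is generated as a ring by $x := c_1(\rho)$. The relation $Nx = 0$ then follows from $\rho^N = 1$ and the additivity of $c_1$ on products of line elements (\Cref{chern_class_homomorphism}(\ref{chern_class_product})): $Nx = c_1(\rho^N) = c_1(1) = 0$. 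This produces a surjective ring map $\varphi \colon \ZZ[x]/(Nx) \twoheadrightarrow R_\KK^*(C_N)$, and the remaining task is to prove injectivity.

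To do so, I would identify the Grothendieck filtration with the $I$-adic filtration, i.e.\ show $\Gamma^n = I^n$. The inclusion $\Gamma^n \supseteq I^n$ is built into the definitions. For the reverse inclusion, the key observation is that $I$ is freely generated as an abelian group by the differences $\rho^i - 1$ for $1 \leq i \leq N-1$, each of which is a line element minus the identity, so that $\gamma_T(\rho^i - 1) = 1 + (\rho^i - 1)T$. Using the multiplicativity $\gamma_T(a+b) = \gamma_T(a)\gamma_T(b)$, any $y = \sum n_i (\rho^i - 1) \in I$ satisfies
\[
	\gamma_T(y) = \prod_{i} \bigl(1 + (\rho^i - 1)T\bigr)^{n_i},
\]
so $\gamma^k(y)$ is a $\ZZ$-polynomial of total degree $k$ in the $\rho^i - 1$, hence lies in $I^k$. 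Consequently every generator of $\Gamma^n$ lies in $I^n$.

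It then remains to compute the $I$-adic associated graded ring via the standard formula $gr_I(\ZZ[t]/(f)) = \ZZ[x]/gr_I(f)$, with $f = (1+t)^N - 1 = Nt + \binom{N}{2}t^2 + \cdots + t^N$. Since $f$ has leading term $Nt$, any product $g(t)f(t)$ lying in $I^k$ forces $g \in I^{k-1}$, and its initial form in degree $k$ is a $\ZZ$-multiple of $Nx^k$; hence $gr_I(f) = (Nx)$ and $gr_I R_\KK(C_N) \cong \ZZ[x]/(Nx)$. Combined with $\Gamma^n = I^n$, the element $x = c_1(\rho)$ is identified with the class of $t = \rho - 1$, and $\varphi$ is an isomorphism. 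The main obstacle is the equality $\Gamma^n = I^n$: the gamma operations are not additive, so the argument hinges on the multiplicativity of $\gamma_T$ together with the very special feature that $I$ admits a generating set of line-element differences, for which $\gamma_T$ takes the simplest possible form; once this is secured, the rest is a transparent calculation of the initial form of a single explicit polynomial.
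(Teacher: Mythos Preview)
Your argument is correct and follows essentially the same route as the paper: both establish $\Gamma^n = I^n$ (the paper asserts this in one line, while you spell out the $\gamma_T$-computation on the basis $\rho^i-1$), and both then show that $d\,t^n \in (t^{n+1}) + ((1+t)^N-1)$ in $\ZZ[t]$ forces $N\mid d$. Your phrasing via initial forms and the paper's explicit degree-lowering induction are the same computation in different dress; the only point to tighten is the notation ``$gr_I(f)$'' --- what you actually compute (and need) is that the full initial ideal $\mathrm{in}((f))$ equals $(Nx)$, which your valuation argument $v_t(gf)=v_t(g)+1$ indeed establishes.
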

	\begin{proof}
		Let $\rho$ be a generating character for $R(G)$. By \cref{R*G_generators}, its first chern class  $x = c_1(\rho)$ generates $R^*(G)$, and by \cref{R*G_G-torsion} we have $Nx = 0$. 
		It remains to show that there is no additional relation in $R^*(G)$, so suppose that $dx^n = 0$ for some $d$; that is, $d(\rho-1)^n \in \Gamma^{n+1}(G)$. We show that necessarily $N$ divides $d$. Note that since $G$ is cyclic, the augmentation ideal is $ I = (\rho - 1)$ and the Grothendieck filtration coincides with the $I$-adic filtration. Let $X = C_1(\rho)$, then the relation $dx^n = 0$ lifts to $dX^n = X^{n+1}P(X)$ in $R(G)$, which we can then lift to $\ZZ[X]$ as: 
		\[
			dX^n = P(X)X^{n+1} + Q(X)\left((X+1)^{N}-1\right).
		\]
		If $n > 1$, by considering the terms of degree $1$ on each side, we conclude that $Q(0) = 0$. We can then divide by $X$ and get a similar equation, with $dX^{n-1}$ on the left. We repeat this process until we reach an equation of the form
		\[
			dX = P(X)X^2 + Q(X)\left((X+1)^N - 1\right).
		\]
		By looking again at terms of degree $1$, we see that $d = NQ(0)$, which is what we wanted.
	\end{proof}
	
	Before we move on to more involved computations, here is a rather nice application of \cref{R*G_G-torsion} to graded character rings over the rationals. The proof of the following requires the use of Adams operations; they are $\lambda$-homomorphisms that exist on any $\lambda$-ring, whose precise definition and main properties are outlined in \cite[\S 5] {atiyah-tall}. For our purposes, it suffices to know that for a character of $G$, the $k$-th Adams operation is defined as $\psi^k( \chi(g) ) = \chi(g^k)$. 
	
	\begin{Proposition}\label{prop_gradedringQQ}
		Let $G$ be a $p$-group. Then $R^*_\QQ(G)$ is concentrated in degrees multiple of $(p-1)$. 
	\end{Proposition}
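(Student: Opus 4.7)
The plan is to exploit the Adams operations $\psi^k$ on $R_\QQ(G)$, leveraging two standard facts. First, from the general theory of $\lambda$-rings (see e.g.\ \cite[\S 5]{atiyah-tall}), the operator $\psi^k - k^n$ sends $\Gamma^n$ into $\Gamma^{n+1}$; hence $\psi^k$ acts on $R^n_\QQ(G) = \Gamma^n/\Gamma^{n+1}$ as multiplication by $k^n$. Second, for a rational character $\chi \in R_\QQ(G)$ and any $k$ coprime to $|G|$, one has $\psi^k(\chi) = \chi$: indeed $\psi^k(\chi)(g) = \chi(g^k)$ is the image of $\chi(g) \in \QQ$ under the Galois automorphism $\zeta_{|g|} \mapsto \zeta_{|g|}^k$ of $\QQ(\zeta_{|g|})/\QQ$, and therefore equals $\chi(g)$ itself. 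This is the rationality-via-Adams-operations criterion cited as \cite[Th. 13.29]{serre}.

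Since $G$ is a $p$-group, coprimality to $|G|$ is the same as coprimality to $p$. Combining the two facts above, multiplication by $k^n - 1$ annihilates $R^n_\QQ(G)$ for every integer $k$ coprime to $p$. By \Cref{R*G_G-torsion}, the group $R^n_\QQ(G)$ is moreover $|G|$-torsion, hence $p$-primary.

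To close the argument, one chooses $k$ to be a primitive root modulo $p$. Then $k^n \equiv 1 \pmod{p}$ if and only if $(p-1) \mid n$. So whenever $(p-1) \nmid n$, the integer $k^n - 1$ is coprime to $p$, and $R^n_\QQ(G)$ is simultaneously annihilated by $k^n - 1$ and by a power of $p$, two coprime integers; it must therefore vanish. The only subtle point is the rationality criterion for $\psi^k$; once that, together with the $k^n$-action of $\psi^k$ on the graded pieces, is in hand, what remains is a short elementary congruence argument.
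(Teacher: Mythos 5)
Your proof is correct and follows essentially the same route as the paper: Adams operations act as multiplication by $k^n$ on $\Gamma^n/\Gamma^{n+1}$, rationality forces $\psi^k = \mathrm{id}$ for $(k,|G|)=1$ by \cite[Th. 13.29]{serre}, and combining this with the $|G|$-torsion of \Cref{R*G_G-torsion} kills $R^n_\QQ(G)$ when $(p-1)\nmid n$. Your explicit choice of a primitive root modulo $p$ just makes precise the paper's "pick any $k \in (\ZZ/p\ZZ)^\times$" step.
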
	 
	\begin{proof}
		By \cite[Prop. 5.3]{atiyah-tall}, for $x \in \Gamma^n$, we have $\psi^k(x) = k^nx \ (\Mod \Gamma^{n+1})$. Moreover, by \cite[Th. 13.29]{serre}, over the rationals, $\psi^k(x) = x$ whenever $(|G|,k) = 1$. In particular, picking any $k \in (\ZZ/p\ZZ)^\times$ we have $(k^n-1)x \in \Gamma^{n+1}$. Since $x$ is $|G|$-torsion, we conclude that $x = 0$ whenever $(k^n-1) \neq 0 \ (\Mod p)$, that is, whenever $n$ is not a multiple of $(p-1)$. 
	\end{proof}

	A straightforward application of this result is the computation of $R^*_\QQ(G)$ for $G$ cyclic of prime order $p$.	
	
	\begin{Corollary}\label{R*GCp_QQ}
		\[
			R^*_\QQ(\ZZ/p\ZZ) = \frac{\ZZ[x]}{(px)}
		\]	
		with $x = c_{p-1}(\chi)$ where $\chi$ is the character of $\QQ[\ZZ/p\ZZ]$. 
	\end{Corollary}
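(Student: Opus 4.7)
The plan is to combine the degree restriction from \Cref{prop_gradedringQQ} with a comparison to the (known) complex case via extension of scalars. The rational irreducible characters of $C_p = \ZZ/p\ZZ$ are the trivial character and the $(p-1)$-dimensional character $\chi$ afforded by $\QQ(\zeta_p)$, so $R_\QQ(C_p)$ is generated as a ring by $\chi$. By \Cref{R*G_generators}, $R^*_\QQ(C_p)$ is then generated by $c_1(\chi),\ldots,c_{p-1}(\chi)$; but \Cref{prop_gradedringQQ} kills the graded pieces in degrees $1,\ldots,p-2$, leaving $x := c_{p-1}(\chi)$ as the sole surviving generator, in degree $p-1$. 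Combined with $px = 0$ from \Cref{R*G_G-torsion}, this yields a surjection $\ZZ[x]/(px) \twoheadrightarrow R^*_\QQ(C_p)$.

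The substance of the proof is showing that this surjection is injective, i.e.\ that no further relation occurs. For this I would use the extension-of-scalars map $R_\QQ(C_p) \hookrightarrow R_\CC(C_p)$, which is a $\lambda$-ring homomorphism (since exterior powers commute with base change) and hence induces a graded ring map $R^*_\QQ(C_p) \to R^*_\CC(C_p)$. Fixing a faithful one-dimensional complex character $\rho$ of $C_p$, one has $\chi = \rho + \rho^2 + \cdots + \rho^{p-1}$ in $R_\CC(C_p)$. Each $\rho^k$ is a line element with $c_1(\rho^k) = k\, c_1(\rho) = ky$ by \Cref{chern_class_homomorphism}(ii), where $y := c_1(\rho)$, so the splitting principle gives
\[
	c_T(\chi) = \prod_{k=1}^{p-1}\bigl(1 + ky\, T\bigr).
\]
Extracting the coefficient of $T^{p-1}$ and invoking Wilson's theorem together with $py = 0$, one computes that $x$ maps to $(p-1)!\, y^{p-1} = -y^{p-1}$ in $R^*_\CC(C_p) = \ZZ[y]/(py)$ (using \Cref{gradedring_Cp}).

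It then suffices to observe that the induced ring map $\ZZ[x]/(px) \to \ZZ[y]/(py)$ sending $x \mapsto -y^{p-1}$ is manifestly injective: the powers $y^{n(p-1)}$ are nonzero of additive order exactly $p$ and sit in pairwise distinct degrees, so no nontrivial $\ZZ/p$-linear combination of them can vanish. The composition through $R^*_\QQ(C_p)$ is therefore injective, forcing our surjection to be an isomorphism. The only delicate point is the comparison with the complex graded ring and the identification of the image of $x$; once Wilson's theorem is deployed, the rest is bookkeeping on top of \Cref{gradedring_Cp} and \Cref{prop_gradedringQQ}.
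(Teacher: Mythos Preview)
Your proof is correct and follows essentially the same route as the paper: reduce to the single generator $c_{p-1}(\chi)$ via \Cref{prop_gradedringQQ}, then establish non-nilpotence and exact order $p$ by pushing forward along $R^*_\QQ(C_p)\to R^*_\CC(C_p)$ and computing the image as $(p-1)!\,c_1(\rho)^{p-1}$. The only discrepancy is that the statement (and the paper's proof) takes $\chi$ to be the regular character of dimension $p$, whereas you use the $(p-1)$-dimensional irreducible afforded by $\QQ(\zeta_p)$; since these differ by a trivial summand their positive-degree Chern classes coincide, so your argument goes through unchanged.
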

	\begin{proof}
		Let $G = \ZZ/p\ZZ$. By \cite[Prop. 13.30 and Ex. 13.1]{serre}, the ring $R_\QQ(G)$ is generated by the characters of permutation representations of the subgroups of $G$.  The only two subgroups of $G$ are the trivial group $\{0\}$ and $G$ itself, so $R_\QQ(G)$ is generated by $\chi$, the regular representation. So $R^*_\QQ(G)$ is in turn generated by $c_i(\chi)$ for $1\leq i \leq p$; by \cref{prop_gradedringQQ}, it is generated by $c_{p-1}(\chi)$. It remains to show that this generator has additive order $p$ and is non-nilpotent. Consider the natural map $R_\QQ(G) \to R_\CC(G)$; it sends $\chi$ to $1+\rho+\cdots+\rho^{p-1}$, with $\rho$ a generating character of $R_\CC(G)$. The total Chern class of $1+\rho+\cdots+\rho^{p-1}$ is
		\[
			c_t\left(\sum_{i=0}^{p-1}\rho^i\right) = \prod_{i = 0}^{p-1}c_t(\rho^i) = \prod_{i = 0}^{p-1}(1+ic_1(\rho)),
		\]
		so $c_{p-1}(\rho) = (p-1)!\cdot c_1(\rho)$, which proves our claim using \cref{gradedring_Cp}.
	\end{proof}

The graded ring of a general cyclic group over the rationals is not known. In the sequel, unless mentioned otherwise, all graded character rings will be computed over the complex numbers.
	\section{The restriction homomorphism} \label{restrictions}
Graded character rings are computed in two steps: first, we identify a minimal set of generators for $R^*(G)$ using general information on the representation theory of $G$, and we determine possible relations in higher degree. The second step consists in showing that there are no extra relations in $R^*(G)$, and is usually much less straightforward. In the case of cyclic groups (see \Cref{gradedring_Cp}), we used an ad hoc method for this step; in this section we rely on the functoriality of $R^*(-)$ to look at restrictions of representations to subgroups of $G$. We rely on this technique, and on \Cref{gradedring_Cp}, to compute of $R^*(C_p^k)$ in \Cref{gradedrepring_Cpk}. We then turn to the dihedral groups $D_p$ for odd primes $p$ in \Cref{gradedring_Dp}, and to $D_4$ in \Cref{gradedring_D4}. In passing, we prove a Künneth formula for groups of coprime order.

\begin{Lemma}\label{R*G_functorial}
	The graded character ring $R^*(-)$ is functorial in $G$: a group homomorphism $\phi : G \to H$ induces a well-defined ring map $\phi^*: R^*(H) \to R^*(G)$, which sends each generator $c_n(\rho)$ to $c_n(\rho\circ\phi)$.\par
	 In particular, if $H$ is a subgroup of $G$, the restriction of representation $\res^{G}_{H}: R(G) \to R(H)$ induces a well-defined homomorphism of graded character rings, also denoted $\res^{G}_{H}: R^*(G) \to R^*(H)$, with
	\[
		\res^{G}_{H}c_n(x) = c_n(\res^{G}_{H}(x))
	\] 
	for all $x\in R(G), n \in \NN$.
\end{Lemma}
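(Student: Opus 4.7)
The plan is to reduce everything to the observation that pullback along $\phi$ is a $\lambda$-ring homomorphism, so that it automatically respects the $\Gamma$-filtration.

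First I would define $\phi^\sharp: R(H) \to R(G)$ at the level of virtual characters by $\phi^\sharp(\chi) = \chi \circ \phi$, or equivalently by pulling back $H$-representations along $\phi$ to $G$-representations. This is clearly a ring homomorphism (tensor products of representations pull back to tensor products) and it is augmentation-preserving since $\dim(\rho \circ \phi) = \dim \rho$. The crucial additional point is that $\phi^\sharp$ commutes with the $\lambda$-operations: for a representation $V$ of $H$, the exterior power $\Lambda^n V$ pulled back along $\phi$ is canonically the exterior power of the pullback, so $\lambda^n(\rho \circ \phi) = \lambda^n(\rho) \circ \phi$. This extends to all virtual characters using the addition formula for $\lambda^n$.

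Next I would deduce that $\phi^\sharp$ commutes with the gamma operations, since $\gamma^n$ is defined from $\lambda^n$ by a fixed polynomial formula. Writing $I_G, I_H$ for the augmentation ideals, the equality $\epsilon_G \circ \phi^\sharp = \epsilon_H$ gives $\phi^\sharp(I_H) \subseteq I_G$. Consequently, for any $x_1,\dots,x_k \in I_H$ and $i_1+\cdots+i_k \geq n$, the element
\[
\phi^\sharp\bigl(\gamma^{i_1}(x_1)\cdots\gamma^{i_k}(x_k)\bigr) = \gamma^{i_1}(\phi^\sharp x_1)\cdots\gamma^{i_k}(\phi^\sharp x_k)
\]
is a generator of $\Gamma^n(G)$. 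Hence $\phi^\sharp(\Gamma^n(H)) \subseteq \Gamma^n(G)$ for all $n$.

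Having established that $\phi^\sharp$ respects the filtration, it passes to the associated graded to give a graded ring homomorphism $\phi^* : R^*(H) \to R^*(G)$. To verify the formula on Chern classes, I would unwind the definitions: the generator $c_n(\rho) \in R^n(H)$ is the class of $C_n(\rho) = \gamma^n(\rho - \epsilon(\rho))$, and since $\phi^\sharp$ commutes with $\gamma^n$ and with the augmentation,
\[
\phi^\sharp\bigl(\gamma^n(\rho - \epsilon(\rho))\bigr) = \gamma^n(\phi^\sharp\rho - \epsilon(\phi^\sharp\rho)) = C_n(\rho \circ \phi),
\]
whose class in $R^n(G)$ is $c_n(\rho \circ \phi)$. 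The statement about restrictions is the special case where $\phi$ is the inclusion $H \hookrightarrow G$. The only mildly delicate point in the whole argument is the compatibility of $\phi^\sharp$ with $\lambda^n$, but this is transparent from the definition of exterior powers of representations; no nontrivial obstacle should arise.
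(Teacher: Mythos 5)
Your argument is correct and is exactly the standard verification the paper has in mind: the paper's own proof is just ``This is clear,'' relying implicitly on the fact that pullback along $\phi$ is an augmentation-preserving $\lambda$-ring homomorphism, hence preserves the $\gamma$-operations, the $\Gamma$-filtration, and the classes $C_n(\rho)=\gamma^n(\rho-\epsilon(\rho))$. So you have simply supplied the details of the same approach, and they are all in order.
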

\begin{proof}
	This is clear.
\end{proof}

The second consequence of \Cref{R*G_functorial} is to reduce the computation of graded character rings of abelian groups to that of $R^*(G)$ for $p$-groups:
\begin{Corollary}\label{GH_coprime} Let $G$ and $H$ be groups with coprime order. Then 
	\[
		R^*_\CC(G\times H) = R^*_\CC(G) \otimes_\ZZ R^*_\CC(H)		
	\]
\end{Corollary}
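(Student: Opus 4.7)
The plan is to build the natural comparison map from the projections, use the coprime hypothesis to collapse the source via the torsion result \Cref{R*G_G-torsion}, and then check injectivity and surjectivity separately. The projections $\pi_G\colon G\times H \to G$ and $\pi_H\colon G\times H \to H$ give, via \Cref{R*G_functorial}, graded ring homomorphisms $\pi_G^*\colon R^*_\CC(G)\to R^*_\CC(G\times H)$ and $\pi_H^*\colon R^*_\CC(H)\to R^*_\CC(G\times H)$; since the target is commutative, these assemble into
\[
\phi\colon R^*_\CC(G)\otimes_\ZZ R^*_\CC(H) \longrightarrow R^*_\CC(G\times H), \qquad a\otimes b \longmapsto \pi_G^*(a)\,\pi_H^*(b),
\]
and the goal is to prove $\phi$ is an isomorphism. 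By \Cref{R*G_G-torsion}, $R^i_\CC(G)$ is $|G|$-torsion and $R^j_\CC(H)$ is $|H|$-torsion for $i,j>0$, so the coprime hypothesis forces $R^i_\CC(G)\otimes_\ZZ R^j_\CC(H)=0$ whenever $i,j>0$. Writing $R^+$ for the positive-degree part, the source thus collapses to $\ZZ \oplus R^+_\CC(G) \oplus R^+_\CC(H)$.

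For injectivity, I would use the sections $i_G\colon G\hookrightarrow G\times H$, $g\mapsto(g,1)$, and the analogous $i_H$. Since $\pi_G\circ i_G=\mathrm{id}_G$ and $\pi_H\circ i_G$ is the constant map to $1_H$, functoriality yields $i_G^*\pi_G^*=\mathrm{id}$ on $R^*_\CC(G)$, while $i_G^*\pi_H^*$ factors through $R^*_\CC(\{1\})=\ZZ$ and hence annihilates $R^+_\CC(H)$. Applied to a relation $\pi_G^*(a)+\pi_H^*(b)=0$ with $a\in R^+_\CC(G)$ and $b\in R^+_\CC(H)$, this forces $a=b=0$ after applying $i_G^*$ and $i_H^*$ respectively.

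For surjectivity, \Cref{R*G_generators} reduces the question to showing each Chern class of a representation of $G\times H$ lies in the image of $\phi$; since every such representation is a sum of external products $\pi_G^*\rho\cdot\pi_H^*\sigma$, the addition formula of \Cref{chern_class_homomorphism} further reduces to the case of $\rho,\sigma$ irreducibles. Applying the splitting principle (\Cref{splittingprinciple}) in each factor, write $\rho=\sum x_i$, $\sigma=\sum y_j$ as sums of line elements; then $\pi_G^*\rho\cdot\pi_H^*\sigma=\sum_{i,j}\pi_G^*(x_i)\pi_H^*(y_j)$ is a sum of line elements with first Chern classes $\pi_G^*c_1(x_i)+\pi_H^*c_1(y_j)$, and expanding
\[
c_T(\pi_G^*\rho\cdot\pi_H^*\sigma) = \prod_{i,j}\bigl(1+(\pi_G^*c_1(x_i)+\pi_H^*c_1(y_j))T\bigr)
\]
exhibits each $c_k$ as a symmetric polynomial in the $\pi_G^*c_1(x_i)$ and $\pi_H^*c_1(y_j)$, hence as a polynomial in $\pi_G^*c_l(\rho)$ and $\pi_H^*c_m(\sigma)$. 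Note that this surjectivity argument uses no coprimality, so the full substantive content of the coprime hypothesis sits in the tensor-product collapse; I therefore expect the main obstacle to be clean bookkeeping with the splitting extensions rather than any deeper difficulty.
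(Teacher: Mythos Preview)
Your proposal is correct and follows essentially the same route as the paper: build $\phi$ from the projections, use \Cref{R*G_G-torsion} with coprimality to collapse the tensor product to $\ZZ\oplus R^+_\CC(G)\oplus R^+_\CC(H)$, get surjectivity from the fact that Chern classes of external products are polynomials in the pulled-back Chern classes, and use the section inclusions $i_G,i_H$ for injectivity (the paper phrases this last step as constructing a two-sided inverse $(\iota_G^*,\iota_H^*)$, which amounts to the same thing). Your surjectivity argument via the splitting principle is spelled out in more detail than the paper's one-line appeal, but the content is identical.
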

\begin{proof}
	Let $\pi_G, \pi_H: G\times H \to G,H$ be the projection maps. By \cite[Th. 3.10]{serre}, for any complex irreducible character $\rho$ of $G\times H$, there are irreducible characters $\sigma_G,\sigma_H$ of $G, H$ respectively such that $\rho = (\sigma_G \circ\pi_G) \cdot (\sigma_H\circ\pi_H$). Let $\rho_G = \sigma_G\circ \pi_G$ and $\rho_H = \sigma_H\circ\pi_H$, then $R^*_\CC(G\times H)$ is generated by classes of the form $c_n(\rho_G\cdot\rho_H)$, which can be written as polynomials in the Chern classes of $\rho_G, \rho_H$. In other words, the projection maps $\pi_G, \pi_H$ induce a surjective homomorphism 
	\[
		\pi_G^*\otimes\pi_H^* : R^*_\CC(G)\otimes R^*_\CC(H) \to R^*_\CC(G\times H).
	\]
	Moreover, applying \Cref{R*G_G-torsion} to the case where $|G|, |H|$ are coprime, we have that
	\[
		\bigoplus_{i+j = n}\left(R^i(G)\otimes R^j(H)\right) \cong R^n_\CC(G) \oplus R^n_\CC(H)
	\]
	for any $n\geq 1$. In particular, the surjection above decomposes as $\bigoplus(\pi_G^*)^{n} \otimes (\pi_H^*)^n: R^n_\CC(G)\oplus R^n_\CC(H) \to R^*_\CC(G\times H)$. The inclusions $\iota_G,\iota_H: G,H \to G\times H$ induce a two-sided inverse $(\iota^*_G,\iota^*_H) = \bigoplus_{n} (\iota_G^n, \iota_H^n): R^n_\CC(G\times H) \to R^n_\CC(G)\oplus R^n_\CC(H)$ to this surjection.
\end{proof}

We now use \Cref{R*G_functorial} to compute of the graded character rings of elementary abelian groups. Let $p$ be a prime number, and let $C_p$ be the cyclic group of order $p$, with a choice of generator $g$. Recall that we fixed $\KK = \CC$.
	\begin{Proposition} \label{gradedrepring_Cpk}
		Let $G = C_p^k$. Then
			\[
				R^*(G) = \frac{\ZZ[x_1,\cdots,x_k]}{(px_i,x_i^px_j = x_ix_j^p)}.
			\]
		with $x_i = c_1(\rho_i)$ where $\rho_i$ restricts to a nontrivial one-dimensional representation of the $i$-th factor $C_p$.
	\end{Proposition}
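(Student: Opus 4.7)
The plan is to exhibit a candidate surjection $A := \ZZ[x_1,\ldots,x_k]/(px_i,\, x_i^p x_j - x_i x_j^p) \twoheadrightarrow R^*(G)$, then prove injectivity by restricting to cyclic subgroups of order $p$.

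Since every irreducible character of $G$ is a monomial $\rho_1^{a_1}\cdots\rho_k^{a_k}$, $R(G)$ is generated as a ring by $\rho_1,\ldots,\rho_k$, so \Cref{R*G_generators} implies that $R^*(G)$ is generated by the first Chern classes $x_i := c_1(\rho_i)$. From $\rho_i^p = 1$ one gets $p x_i = c_1(\rho_i^p) = 0$ by additivity of $c_1$ on products of line elements. For the higher relation, I would set $u_i := \rho_i - 1$ and use the identity
\[
	0 = (1+u_i)^p - 1 = p u_i + \binom{p}{2}u_i^2 + \cdots + \binom{p}{p-1}u_i^{p-1} + u_i^p
\]
in $R(G)$ to establish, by induction on $n$, the filtration bound $p \cdot I^n \subseteq I^{n+p-1}$ (where $I$ is the augmentation ideal; this uses that every intermediate $\binom{p}{k}$ is divisible by $p$). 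Combining with the analogous expansion for $u_j$,
\[
	u_i^p u_j - u_i u_j^p = -\sum_{k=2}^{p-1}\binom{p}{k}\bigl(u_i^k u_j - u_i u_j^k\bigr) \in p\cdot I^3 \subseteq I^{p+2}.
\]
For abelian $G$ one has $\gamma^n(\rho-1) = 0$ whenever $\rho$ is a line element and $n \ge 2$, hence $\Gamma^n = I^n$; so $x_i^p x_j - x_i x_j^p$ vanishes in $R^{p+1}(G)$, and we obtain the advertised surjection $A \twoheadrightarrow R^*(G)$.

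For injectivity I would use restriction to cyclic subgroups. The subgroups of $G$ of order $p$ are parameterized by $[\vec a] \in \mathbb{P}^{k-1}(\FF_p)$ via $H_{\vec a} = \langle g_1^{a_1}\cdots g_k^{a_k}\rangle$, and \Cref{R*G_functorial} together with \Cref{gradedring_Cp} gives a restriction $R^*(G) \to R^*(H_{\vec a}) = \ZZ[y]/(py)$ sending $x_i$ to $a_i y$. On a homogeneous $P \in A$ of degree $n \ge 1$ this reads $\res_{H_{\vec a}}(P) = P(\vec a)\,y^n$, so any $P$ in the kernel of $A \twoheadrightarrow R^*(G)$ must satisfy $P(\vec a) \equiv 0 \pmod p$ for every $\vec a \in \FF_p^k$. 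I would then invoke the algebraic fact that the ideal of homogeneous polynomials in $\FF_p[x_1,\ldots,x_k]$ vanishing on $\FF_p^k$ is precisely $(x_i^p x_j - x_i x_j^p : i \neq j)$: Fermat gives one inclusion, while the other follows by noting that such a polynomial vanishes on each of the $(p^k-1)/(p-1)$ lines through the origin, combined with a Hilbert-series comparison (or a short induction on $k$). Since $p$ annihilates $A$ in positive degree, this forces $P = 0$, and the surjection is an isomorphism.

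The principal obstacle I expect is the filtration estimate $p \cdot I^n \subseteq I^{n+p-1}$: promoting the naive identity in $R(G)$, which only gives $u_i^p u_j - u_i u_j^p \in p\cdot I^3$, to an element of $\Gamma^{p+2}$ requires iterative application of $p u_i \equiv -u_i^p \pmod{I^{p+1}}$ and careful bookkeeping of $p$'s. The algebraic lemma invoked at the end is classical but deserves a self-contained proof for $k \ge 3$.
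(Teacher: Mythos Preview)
Your proposal is correct and follows the same architecture as the paper: exhibit the surjection $A \twoheadrightarrow R^*(G)$, then prove injectivity by restricting to all cyclic subgroups of order $p$. Two differences in execution are worth noting. For the relation $x_i^p x_j = x_i x_j^p$, the paper avoids your filtration estimate entirely: it factors $X_i^p = pX_i\bigl(-1+\phi(X_i)\bigr)$ with $\phi$ having no constant term, and then
\[
X_i^pX_j\bigl(-1+\phi(X_j)\bigr) = pX_iX_j\bigl(-1+\phi(X_i)\bigr)\bigl(-1+\phi(X_j)\bigr) = X_iX_j^p\bigl(-1+\phi(X_i)\bigr)
\]
gives $X_i^pX_j - X_iX_j^p \in I^{p+2}$ in one line. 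Your route via $p\cdot I^n \subseteq I^{n+p-1}$ is also fine, and easier than you fear: iterating $p\cdot I^n \subseteq I^{n+p-1}+p\cdot I^{n+1}$ for $p-1$ steps lands you in $I^{n+p-1}+p\cdot I^{n+p-1}=I^{n+p-1}$, so no delicate bookkeeping is needed. For injectivity, the paper does not invoke your algebraic lemma about the homogeneous vanishing ideal of $\FF_p^k$ as a black box; instead it writes down an explicit spanning set of monomials in each degree (those where only the first nonzero exponent may exceed $p-1$) and shows their restrictions to all cyclic subgroups are linearly independent via a Vandermonde argument and induction on $k$---which is exactly the ``short induction on $k$'' you gesture at, so the two arguments are the same underneath.
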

	\begin{proof}
		 Denote by $g_i$ the element $(1,\cdots,1,g,1,\cdots,1)$ with $g$ in $i$-th position, so that $G$ is generated by $g_1,\cdots,g_k$. Let $\omega = \exp^{2i\pi/p}$, and let $\rho_i$ be the representation of $G$ defined by $\rho_i: g_j \mapsto \omega^{\delta_{ij}}$. The $\rho_i$'s generate $R(G)$, so the elements $x_i := c_1(\rho_i)$ generate $R^*(G)$. Note that $\rho_i^p = 1$ for all $i$, so $px_i = 0$ by \Cref{chern_class_homomorphism}(\ref{chern_class_product}).\\
		 The relation $x_i^px_j = x_ix_j^p$ is obtained as follows: let $X_i$ be the standard lift $\rho_i - 1$ of $x_i$ to $R(G)$. Then $(X_i+1)^p = 1$, so that
			\begin{align*}
				X_i^p &= -\sum_{l=1}^{p-1}\binom{p}{l}X_i^l = X_i\left(-\sum_{l=0}^{p-2}\binom{p}{l+1}X_i^l\right) \\
					&= pX_i(-1+\phi(X_i)),
			\end{align*}
			where $\phi(T) \in \ZZ[T]$ has no constant term. For any $i,j$, write
			\begin{align*}
			 	 X_i^pX_j\left( -1 + \phi(X_j) \right) &= pX_iX_j\left(-1+\phi(X_i)\right)\left(-1+\phi(X_j)\right) \\
			 	 	&= X_iX_j^p\left(-1+\phi(X_i)\right).
			\end{align*}
			In $R^{p+1}(G)$, this is:
			\[
				x_i^px_j = x_ix_j^p,
			\]
			and the generators of $R^*(G)$ satisfy all the required relations. \\
			Let us show that there are no extra relations: the graded piece of rank $l$ is generated by monomials of the form:
			\[
				x_1^{s_1}\cdots x_k^{s_k}, \ \ \ \  \sum_{i=1}^{k} s_i = l.
			\]
			Let $S_l \subset \ZZ^k_{\geq 0}$ be the set of multi-indices $(s_1,\cdots,s_k)$ such that $\sum s_i = l$ and only the first nonzero coordinate of each $s\in S_l$ is (possibly) greater than $p-1$. We must show that the monomials $x^s = x_1^{s_1}\cdots x_k^{s_k}$ are linearly independent. Consider a zero linear combination:
			\begin{align}
				\sum_{s\in S_l}a_sx^s = 0 \label{Cpk_linearcombination}
			\end{align}
			and let $\psi: R^*(C_p^k) \rightarrow R^*(C_p)$ be the restriction to the cyclic group generated by the product $g_1^{t_1}\cdots g_k^{t_k}$ for some $0\leq t_j \leq p-1$. Then $\psi(x_j) = t_j\cdot z$, where $z$ is the standard one-degree generator of $R^*(C_p)$, and \Cref{Cpk_linearcombination} becomes:
			\begin{equation*}
				\sum_{s\in S_l}a_st_1^{s_1}\cdots t_k^{s_k}z^l = 0,
			\end{equation*}
			that is,
			\begin{equation}
				\sum_{s\in S_l}a_st_1^{s_1}\cdots t_k^{s_k} = 0 \in \Fp, \label{Cpk_linearcombinationFp}
			\end{equation}
			for all possible strings $(t_1,\cdots,t_k)$ with $0\leq t_j \leq p-1$. In particular, grouping terms by powers of $t_k$ in \Cref{Cpk_linearcombinationFp}, we get:
			\begin{align}
				\begin{dcases} \ \ \ a_{(0,\cdots,0,l)}t_k^{l} = 0 &\text{ when } t_1 = \cdots = t_{k-1} = 0 \\
					\ \ \ \sum_{t=0}^{p-1}\left(\sum_{s\in S_{l-i}} b_st_1^{s_1}\cdots t_{k-1}^{s_{k-1}} \right) t_k^i = 0 &\text{otherwise.}	
				\end{dcases}				
			\end{align}
			This implies that the coefficient of $x_k^l$ in \Cref{Cpk_linearcombination} is zero; more generally, the second equation must be true for all values of $t_k$, from $0$ to $p-1$. In other words, the $\left( \sum b_st_1^{s_1}\cdots t_{k-1}^{s_{k-1}} \right)$ are the entries of a vector in the kernel of the Vandermonde matrix $(t_k^i)_{t_k = 1,\cdots,p-1}^{i = 1,\cdots,p-1}$, which is invertible in $\Fp$. Therefore 
	\[
		\sum_{s\in S_{l-i}} b_st_1^{s_1}\cdots t_{k-1}^{s_{k-1}} = 0
	\]		
	for all combinations $(t_1,\cdots,t_{k-1})$. An immediate induction shows that we must have each $a_s = 0$, so the monomials $\lbrace x^s\rbrace_{s\in S_l}$ are linearly independent.
	\end{proof}
	Note that the relations between the generators of $R^*((C_p)^k)$ appear in degree $p+1$, so the degree of relations goes to $\infty$ as $p \to \infty$. In \cref{quillen} we shed light on this phenomenon, via a general result about the minimal degree of relations in a $p$-group. 

	We now turn to our first non-abelian group, for which the computation combines the restriction map with some basic Chern class algebra. Let $p$ be an odd prime and consider the dihedral group:
	\[
	D_p = \left\langle \tau, \sigma \vert \tau^2 = \sigma^p = 1, \tau\sigma\tau = \sigma^{-1} \right\rangle.
	\]	
	There are $(p+1)/2$ irreducible representations of $D_p$: 
	\begin{itemize}
		\item Two representations of degree 1, the trivial representation 1 and the signature $\varepsilon$ which sends elements of the form $\sigma^j$ to 1, and elements of the form $\tau\sigma^j$ to -1.
		\item And $(p-1)/2$ representations $\chi_1,  \cdots \chi_{(p-1)/2}$ of degree 2:
			\begin{align*}
			\chi_k(\sigma^j) &= \begin{pmatrix} e^{\frac{2ikj\pi}{p}} & 0 \\
				0 &  e^{-\frac{2ikj\pi}{p}}\end{pmatrix} \\
			\chi_k(\tau) &= \begin{pmatrix} 0 & 1 \\ 1 & 0 \end{pmatrix} \begin{matrix} \vphantom{0}\\ \vphantom{1}.\end{matrix}
			\end{align*}
	\end{itemize}
	The characters of these generate the ring $R(D_p)$. For convenience, define $\chi_0 = 1+\varepsilon$; we have the following relations:
	\begin{align}
		\varepsilon^2 &= 1  \label{Dp_order_epsilon} \\
		\varepsilon\cdot\chi_k &= \chi_k \\
		\chi_k\cdot\chi_l &= \chi_{k+l} + \chi_{k-l}. \label{Dp_chikchil}
	\end{align}
	\begin{Proposition} \label{gradedring_Dp}
		Let $x = c_1(\chi_1)$ and $y = c_2(\chi_1)$, then
		\[
		R^*(D_p) = \frac{\ZZ\left[x,y\right]}{(2x,py,xy)}.
		\]
	\end{Proposition}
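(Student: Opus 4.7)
The plan is to identify $x$ and $y$ as generators, derive the relations $2x = 0$, $xy = 0$, and $py = 0$, and then rule out further relations by restricting to cyclic subgroups.

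By iterating $\chi_1 \chi_k = \chi_{k+1} + \chi_{k-1}$, every $\chi_k$ is a polynomial in $\chi_1$ and $\varepsilon$, so $R(D_p)$ is generated as a ring by these two characters. \Cref{R*G_generators} then says that $R^*(D_p)$ is generated by $c_1(\varepsilon)$, $c_1(\chi_1)$, and $c_2(\chi_1)$. The matrix representation of $\chi_1$ shows that $\det \chi_1 = \varepsilon$, so \Cref{chern_class_determinant} identifies $c_1(\chi_1) = c_1(\varepsilon) = x$, leaving only $x$ and $y$. The relation $2x = 0$ then follows from $\varepsilon^2 = 1$ and \Cref{chern_class_homomorphism}(\ref{chern_class_product}).

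For $xy = 0$ and $py = 0$, I apply \Cref{splittingprinciple}: in an extension $R'$ of $R_\CC(D_p)$, write $\chi_1 = a + b$ with $ab = \varepsilon$, and set $\alpha = c_1(a)$, $\beta = c_1(b)$, so that $x = \alpha + \beta$ and $y = \alpha \beta$. Define $s_k = a^k + b^k$; the Newton-type recursion $s_{k+1} = \chi_1 s_k - \varepsilon s_{k-1}$ (with $s_0 = 2, s_1 = \chi_1$) parallels the recursion for $\chi_k$, and induction yields $\chi_k = a^k + b^k$ when $k$ is odd and $\chi_k = a^k + b^k + \varepsilon - 1$ when $k$ is even. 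In both cases one reads off $c_2(\chi_k) = k^2 y$ modulo $2x = 0$. Splitting $\chi_{(p-1)/2} = c + d$ with $cd = \varepsilon$ and writing $k = (p-1)/2$, a direct computation gives $c_T(\chi_{(p-1)/2}^2) = (1 + 4 k^2 y T^2)(1 + x^2 T^2)$. The identity $\chi_{(p-1)/2}^2 = \chi_{p-1} + \chi_0 = \chi_1 + 1 + \varepsilon$ (coming from $\chi_k \chi_l = \chi_{k+l} + \chi_{k-l}$ together with $\chi_{p-k} = \chi_k$) then forces
\[
(1 + 4 k^2 y T^2)(1 + x^2 T^2) = c_T(\chi_1 + 1 + \varepsilon) = 1 + (x^2 + y) T^2 + xy T^3.
\]
The coefficient of $T^3$ yields $xy = 0$, and the $T^2$ coefficient yields $(4k^2 - 1) y = p(p-2) y = 0$. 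Combined with $2py = 0$ from \Cref{R*G_G-torsion} (using $|D_p| = 2p$), and the fact that $\gcd(p(p-2), 2p) = p$ since $p$ is odd, this forces $py = 0$.

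To rule out extra relations, I use \Cref{R*G_functorial} to restrict to $\langle \sigma \rangle \cong C_p$ and $\langle \tau \rangle \cong C_2$. Since $\res_{\langle\sigma\rangle}(\chi_1) = \rho + \rho^{-1}$ for $\rho(\sigma) = e^{2i\pi/p}$, one has $\res_{\langle\sigma\rangle}(x) = 0$ and $\res_{\langle\sigma\rangle}(y) = -z^2$, where $z$ generates $R^*(C_p)$; similarly $\res_{\langle\tau\rangle}(\chi_1) = 1 + \varepsilon_{C_2}$ gives $\res_{\langle\tau\rangle}(x) = w$ and $\res_{\langle\tau\rangle}(y) = 0$, where $w$ generates $R^*(C_2)$. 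Since $xy = 0$ kills all mixed monomials, any element of degree $n$ in $\ZZ[x, y]/(2x, py, xy)$ is of the form $a x^n + b y^{n/2}$ (with the second term absent when $n$ is odd), and the combined restriction sends it to $\bigl((-1)^{n/2} b z^n,\, a w^n\bigr) \in \ZZ/p \oplus \ZZ/2$, which is injective. Hence the natural surjection $\ZZ[x, y]/(2x, py, xy) \to R^*(D_p)$ is an isomorphism. The main technical step is the Chebyshev-style identification of $\chi_k$ in the splitting ring; the remaining relations and the injectivity of the restriction are then routine coefficient bookkeeping.
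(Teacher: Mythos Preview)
Your proof is correct, and while it shares the overall scaffolding with the paper (splitting principle, the product relation \eqref{Dp_chikchil}, and restriction to cyclic subgroups), the execution is genuinely different. The paper splits each $\chi_k$ and $\chi_l$ separately, applies $c_T$ to the general identity $\chi_k\chi_l=\chi_{k+l}+\chi_{k-l}$, and extracts the three relations \eqref{Dp_relationsum}--(4.9); it then specialises $k=l$ to obtain $xy_k=0$ and $py_k=0$, and finally shows all $y_k$ are multiples of $y$ via the recursion. You instead split only $\chi_1$, identify $\chi_k$ with $a^k+b^k$ (up to the correction $\varepsilon-1$ for even $k$, using $(\varepsilon-1)\chi_1=0$) to read off $c_2(\chi_k)=k^2y$ directly, and then apply $c_T$ to the single instance $\chi_{(p-1)/2}^2=\chi_1+1+\varepsilon$; the $T^3$-coefficient gives $xy=0$ immediately and the $T^2$-coefficient gives $p(p-2)y=0$, which combined with $2py=0$ yields $py=0$ by a gcd argument. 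Your Chebyshev-style identification buys the explicit formula $y_k=k^2y$ up front (the paper only gets ``multiple of $y$''), at the cost of a slightly more delicate induction in the splitting ring. For the absence of further relations, both arguments are the same restriction idea; yours is simply written out more explicitly as an injectivity statement for the combined map to $R^*(C_p)\oplus R^*(C_2)$.
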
	
	\begin{proof}
		Let $x$ and $y$ be as above; note that \Cref{chern_class_determinant} implies that $x = c_1(\chi_1) = c_1(\chi_k) = c_1(\mathrm{det} \ \chi_k) = c_1(\varepsilon)$ for any $k$, and $0 = c_1(\varepsilon^2) = 2c_1(\varepsilon) = 2x$. For the other relations, we use \Cref{Dp_chikchil} above and apply the total Chern class $c_t$ to both sides:
		\begin{equation}
			c_t(\chi_k\chi_l) = c_t(\chi_{k+l})c_t(\chi_{k-l}). \label{Dp_totalclass}
		\end{equation}
		Let $y_i = c_2(\chi_i)$. Expand the right-hand side:
		\begin{align*}
			c_t(\chi_{k+l})c_t(\chi_{k-l}) &= (1+xT+y_{k+l}T^2)(1+xT+y_{k-l}T^2) \\
				&= 1 + 2xT + (x^2+ y_{k+l} + y_{k-l})T^2 + (xy_{k+l} + xy_{k-l})T^3 + y_{k+l}y_{k-l}T^4. \numberthis \label{Dp_ctrhs}
		\end{align*}
		For the left-hand side, we use the splitting principle (\Cref{splittingprinciple}): in some extension of $R(D_p)$, we can write $\chi_k = \rho_1 + \rho_2$ and $\chi_l = \eta_1 + \eta_2$ with $\rho_i$, $\eta_i$ of dimension 1, in a way that is compatible with the $\Gamma$-filtration. Then:
		\begin{align*}
			c_t(\chi_k\chi_l) =& c_t\left((\rho_1+\rho_2)(\eta_1+\eta_2)\right) \\
				=& c_t(\rho_1\eta_1)c_t(\rho_1\eta_2)c_t(\rho_2\eta_1)c_t(\rho_2\eta_2) \\
				=& (1+(c_1(\rho_1)+c_1(\eta_1))T)\cdot(1+(c_1(\rho_1)+c_1(\eta_2))T)\\ 
					&\cdot(1+(c_1(\rho_2)+c_1(\eta_1))T)\cdot(1+(c_1(\rho_2)+c_1(\eta_2))T).
		\end{align*}
		Now, let $s_1, s_2$ (resp. $t_1, t_2$) be the first and second symmetric polynomials in $(\rho_1, \rho_2)$ (resp. ($\eta_1,\eta_2)$). Then $c_i(\chi_k) = s_i$ and $c_i(\chi_l) = t_i$. The last equality can be rewritten:
		\begin{align*}
			c_t(\chi_k\chi_l) = &1+ 2(s_1+t_1)T + (t_1^2+s_1^2+3s_1t_1+2t_2+2s_2)T^2 \\
				&+(s_1^2t_1+s_1t_1^2+2s_1s_2+2t_1t_2+2s_1t_2 + 2s_2t_1)T^3 \\
				&+ (t_2^2 + s_2^2 + s_1s_2t_1 + s_1t_1t_2 + s_1^2t_2 + s_2t_1^2 -2s_2t_2)T^4.
		\end{align*}
		We replace $s_1 = t_1 = x$ and eliminate all occurrences of $2x$ to obtain
		\begin{equation}
			c_t(\chi_k\chi_l) = 1 + (x^2+2y_k+2y_l)T^2 + (y_k-y_l)^2T^4. \label{Dp_ctlhs}
		\end{equation}
		Comparing coefficients in \Cref{Dp_ctrhs} and \Cref{Dp_ctlhs}, we obtain:
		\begin{align}
			y_{k+l}+y_{k-l} &= 2(y_k + y_l) \label{Dp_relationsum} \\
			xy_{k+l} + xy_{k-l} &= 0 \label{Dp_relationproduct} \\
			y_{k+l}y_{k-l} &= (y_k-y_l)^2 .
		\end{align}
		First look at \Cref{Dp_relationproduct} with $k=l$. Note that $y_0 = c_2(1+\varepsilon) = 0$, and thus \Cref{Dp_relationproduct} yields $x\cdot y_{2k} = 0$ for all $k$, which is equivalent to $x\cdot y_k = 0$ for all $k$ since indices are understood modulo the odd prime $p$.\\
		We then show that $py_k = 0$ for all $k$. Recall that $R^*(D_p)$ is $2p$-torsion, and consider \Cref{Dp_relationsum} with $k=l$. Multiplying by $p$, we obtain $py_{2k} = 0$ for all $k$. Again, this implies that $py_k = 0$ for all $k$. \\
		Finally, consider \Cref{Dp_relationsum} with $l = k$ and $l = k+1$. This gives:
		\begin{align*}
			y_{2k} &= 4y_k \\
			y_{2k+1} &= 2(y_k+y_{k+1}) - y_1.
		\end{align*}
		Together, these two relations imply that all $y_k$'s are multiples of $y_1 =: y$.\\		
		It remains to show that these are the only relations in $R^*(D_p)$, that is, $x$ and $y$ are not nilpotent, and there is no extra dependency relation between them. Restricting $x$ to $C_2$ and $y$ to $C_p$ shows none of the generators are nilpotent, while restricting both $x$ and $y$ to $C_2$ eliminates any extra possible relation.
	\end{proof}
	
	A similar argument gives $R^*(D_4)$, where $D_4$ is the dihedral group $D_4$ of order 8. Note that $R^*(D_4)\otimes \FF_2$ is already known and was computed in \cite[Prop. 3.12]{guillot-minac}. It has four nontrivial irreducible representations:
	 \begin{itemize}
	 	\item In degree 1, the representations $\rho: r \mapsto -1, \ s\mapsto 1$ and $\eta: r \mapsto 1, s\mapsto -1$ and their product $\rho\eta$,
	 	\item And in degree 2, the representation $\Delta$, which sends $s$ to $\begin{pmatrix} 1 & 0 \\ 0 & -1 \end{pmatrix}$ and $r$ to $\begin{pmatrix} 0 & -1 \\ 1 & 0\end{pmatrix}$,
	 \end{itemize}
	with relations:
	\begin{align}
		\rho^2 &= \eta^2 = 1 \\
		\rho\Delta &= \eta\Delta = \Delta  \\
		\Delta^2 &= 1+\rho+\eta+\rho\eta. \label{D4_relationdelta}
	\end{align}
	\begin{Proposition}\label{gradedring_D4}
	Let $c_1(\rho) = x$; $c_1(\eta) = y$ and $c_2(\Delta) = b$. Then 
		\[
		R^*(D_4) = \frac{\ZZ\left[x,y,b\right]}{(2x,2y,4b,xy,xb-yb)}.
		\]
	\end{Proposition}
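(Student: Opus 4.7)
Following the approach used for $D_p$, I would first observe that $\det \Delta = \eta$ (immediate from the given matrices), so \Cref{chern_class_determinant} forces $c_1(\Delta) = c_1(\eta) = y$. Consequently, $\Delta$ contributes only the new Chern class $b = c_2(\Delta)$, and by \Cref{R*G_generators} the ring $R^*(D_4)$ is generated by $x, y, b$. The degree-$1$ relations $2x = 2y = 0$ follow from $\rho^2 = \eta^2 = 1$ via \Cref{chern_class_homomorphism}(\ref{chern_class_product}).

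The deeper relations I would extract by applying the total Chern class $c_t$ to the identities $\rho\Delta = \eta\Delta = \Delta$ and $\Delta^2 = 1+\rho+\eta+\rho\eta$, together with the splitting principle (\Cref{splittingprinciple}). Writing $\Delta = \xi_1 + \xi_2$ in a $\lambda$-ring extension and setting $u = c_1(\xi_1)$, $v = c_1(\xi_2)$ so that $u + v = y$ and $uv = b$, one has for example
\[
	c_t(\rho\Delta) = (1 + (x+u)T)(1 + (x+v)T) = 1 + (2x + y)T + (x^2 + xy + b)T^2,
\]
which, compared with $c_t(\Delta) = 1 + yT + bT^2$ and reduced by $2x = 0$, yields the degree-$2$ relation. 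Similarly, expanding $c_t(\Delta^2) = (1+2uT)(1+yT)^2(1+2vT)$ and $c_t(1+\rho+\eta+\rho\eta) = (1+xT)(1+yT)(1+(x+y)T)$ and matching coefficients after the reductions $2x = 2y = 0$ delivers both the $4b$-torsion relation (from $T^2$) and a cubic relation (from $T^3$). The remaining cubic relation coupling $b$ to the degree-$1$ generators can be obtained from the auxiliary identity $\eta \cdot C_2(\Delta) = C_2(\Delta)$ in $R(D_4)$ — valid because the lift $\lambda^2(\Delta - 1) = 1 - \Delta + \eta$ is $\eta$-invariant — whose image in the graded ring produces a multiplicative relation between $y$ and $b$.

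To confirm that no extra relations hold, I would apply \Cref{R*G_functorial} to restrict to the subgroups $\langle r\rangle \cong C_4$, $\langle s\rangle \cong C_2$, and the Klein-$4$ subgroup $\langle r^2, s\rangle$, whose graded rings are known by \Cref{gradedring_Cp} and \Cref{gradedrepring_Cpk}. The restriction values of the generators are straightforward to compute (for instance, on $\langle r \rangle$ one has $x \mapsto 2w$, $y \mapsto 0$, and $b \mapsto 3w^2$, where $w$ is the standard degree-$1$ generator of $R^*(C_4)$), and a degree-by-degree check ensures that any putative additional relation among $x, y, b$ must fail at least one restriction.

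The main obstacle is the cubic relation involving $b$: extracting its precise form requires a careful synthesis of the higher-order Chern class expansions with the auxiliary identity above, and one must simultaneously check via the restriction argument that this is the only cubic relation needed — the bookkeeping between the symmetric-function expressions in $u,v$ and the torsion reductions $2x=2y=0$, $4b=0$ is where genuine care is needed.
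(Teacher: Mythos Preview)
There are two real gaps.

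First, your identification $\det\Delta=\eta$ (hence $c_1(\Delta)=y$) is what the given matrices say, but with $u+v=y$ the degree-$2$ comparison from $\rho\Delta=\Delta$ gives $x^2+xy=0$, not $xy=0$; likewise your auxiliary identity $\eta\cdot C_2(\Delta)=C_2(\Delta)$ yields $yb=0$, not $xb-yb=0$. So the relations you actually derive are not the ones in the stated presentation, and you never reconcile this. The paper's proof instead takes $\det\Delta=\rho\eta$ (so $c_1(\Delta)=x+y$) and, rather than expanding $c_t(\rho\Delta)$ or $c_t(\Delta^2)$, simply computes in $R(D_4)$ that the lifts satisfy $XB=YB=XY$. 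That one line gives both $xy=0$ (since $XY=XB\in\Gamma^3$) and $xb=yb$ simultaneously, and is considerably cleaner than your symmetric-function bookkeeping.

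Second, your ``no extra relations'' step does not close. On each of the three subgroups you list, the class $x^2$ restricts to zero: on $\langle r\rangle\cong C_4$ you have $x\mapsto 2w$, so $x^2\mapsto 4w^2=0$; on $\langle s\rangle$ and on $\langle r^2,s\rangle$ the character $\rho$ is trivial, so $x\mapsto 0$. Hence these restrictions cannot exclude the spurious relation $x^2=0$ (which must be excluded, since $x^2\ne 0$ in the claimed ring). The paper avoids this difficulty entirely by quoting the prior computation of $R^*(D_4)\otimes\FF_2$ from \cite{guillot-minac}, which already rules out nilpotence and any further relations modulo $2$; only the precise additive order of the powers $b^i$ is then verified by the single restriction to $\langle r\rangle\cong C_4$.
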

	\begin{proof}
	Note that $c_1(\rho\eta) = x+y$ and $c_1(\Delta) = c_1(\det\Delta) = c_1(\rho\eta)$. So the graded ring is indeed generated by $x,y,b$. We have $2x = 2y = 0$ from the relations above; and, letting $X,Y,B$ being the standard lifts $C_1(\rho), C_1(\eta), C_2(\Delta)$ of $x,y$ and $b$ to $R(D_4)$, we compute that $XB = YB = XY$. So $XY \in \Gamma^3$, thus $xy = 0$ and $xb = yb$. Finally, applying the total Chern class to \Cref{D4_relationdelta} yields the equation:
	\[
		5c_1(\Delta)^2 + 4b = x^2+3xy+y^2 = x^2+ y^2 = (x+y)^2
	\]
	and since $c_1(\Delta) = x+y$, we obtain $4b = 0$. \\
	To see these are the only relations, we use the computation of $R^*(D_4)\otimes\FF_2 = \frac{\ZZ[x,y,b]}{(xy,xb-yb)}$ from \cite{guillot-minac}: tensoring with $\FF_2$ shows that none of $x,y,b$ is nilpotent and that there are no extra relations between the genrerators. Finally, restriction to $C_4 = \langle r \rangle$ shows that any power $b^i$ of $b$ has additive order $4$. 
	\end{proof}		
	
	\section{Universal enveloping algebras} \label{quillen}
The aim of this section is to construct, for any abelian $p$-group $G$, a map:
\[
	R^*(G)\otimes \Fp \to gr_\bullet (\Fp G),
\]
where $gr_\bullet \Fp G$ is the graded ring associated to the filtration of the group ring $\Fp G$ by powers of its augmentation ideal. To this effect, we apply the main result of \cite{quillen}: fix a prime $p$, and let $\{G_n\}$ denote the lower central series of $G$, defined by $G_1 = G$ and $ G_{n+1} = (G_n,G) $. Consider the sequence $\{D_n\}$, where $D_n$ is the $n$-th mod $p$ dimension subgroup of $G$:
\[
D_n := \prod_{ip^s \geq n}G_i^{p^s}.
\]

Then $\{D_{n}\}$ is a $p$-filtration of $G$, that is, it satisfies:
\begin{itemize}
	\item $(D_r, D_s) \subseteq D_{r+s}$
	\item $x\in D_r \implies x^p \in D_{pr}$ for all $r$.
\end{itemize}
Moreover, $\{D_{n}\}$ is the fastest descending $p$-filtration of $G$ (see \cite[\S 11.1]{dixon}). Set $L_\bullet(G) = \bigoplus D_n / D_{n+1}$, then $L_\bullet(G)$ is a $p$-restricted graded Lie algebra over $\Fp$.
On the other hand, if $I$ denotes the augmentation ideal of the group ring $\Fp G$, then
\[
  F_n := \{x \in G \ | \ x-1 \in I^n \}
\]
is also a $p$-filtration of $G$, thus $F_n \supset D_n$ and there is a map of Lie algebras:
\[
	\psi: \begin{cases} L_\bullet (G)&\to gr_\bullet (\Fp G) \\
		g \ (\mathrm{mod} \ F^n) &\mapsto (g-1) \ (\mathrm{mod} \ I^n). \end{cases}
\]

\begin{Theorem}[{\cite[\S 1]{quillen}}] 
	The homomorphism $\widehat{\psi}$ from $gr_\bullet (\mathbb{F}_pG)$ to the universal enveloping algebra $U(L_\bullet(G))$ induced by $\psi$  is an isomorphism. 
\end{Theorem}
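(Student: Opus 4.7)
The map $\widehat{\psi}$ is the algebra homomorphism $U(L_\bullet(G)) \to gr_\bullet(\mathbb{F}_p G)$ obtained by applying the universal property of the (restricted) universal enveloping algebra to $\psi$; the plan is to show this extension is an isomorphism. First I would verify that $\psi$ is a morphism of $p$-restricted Lie algebras. Two identities in $\mathbb{F}_p G$ do the work: for $g \in D_r$, $h \in D_s$,
\[
	(g,h) - 1 \equiv (g-1)(h-1) - (h-1)(g-1) \pmod{I^{r+s+1}},
\]
which shows compatibility with brackets, together with the freshman's dream $g^p - 1 \equiv (g-1)^p \pmod{I^{pr+1}}$, which shows compatibility with the $p$-power operation. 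Because $\{F_n\}$ is a $p$-filtration and $D_n \subseteq F_n$, both identities land in the appropriate degree of $gr_\bullet(\mathbb{F}_p G)$, so $\widehat{\psi}$ exists by the universal property.

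Surjectivity is immediate: the augmentation ideal $I$ is spanned over $\mathbb{F}_p$ by $\{g-1 : g \in G\}$, hence $gr_\bullet(\mathbb{F}_p G)$ is generated as an algebra by the degree-one classes $\overline{g-1} \in I/I^2$, all of which lie in the image of $\psi$.

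The main obstacle is injectivity. When $G$ is a finite $p$-group I would handle it by a dimension count: the filtration $\{D_n\}$ reaches the trivial subgroup in finitely many steps, so $\prod_{n\geq 1}|D_n/D_{n+1}| = |G|$ and hence $\dim_{\mathbb{F}_p} L_\bullet(G) = \log_p|G|$. The Poincaré–Birkhoff–Witt theorem for $p$-restricted Lie algebras then gives $\dim_{\mathbb{F}_p} U(L_\bullet(G)) = p^{\dim L_\bullet(G)} = |G| = \dim_{\mathbb{F}_p} gr_\bullet(\mathbb{F}_p G)$, so the surjection $\widehat{\psi}$ between equidimensional finite-dimensional vector spaces is automatically an isomorphism. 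For a general group the same statement follows by naturality: $\widehat{\psi}$ is compatible with the projections $G \twoheadrightarrow G/D_N$, both constructions agree in degrees $< N$ with their counterparts for $G/D_N$, and a degreewise limit reduces to the finite $p$-group case. An alternative route, closer in spirit to Quillen's original proof, is to observe that both sides carry compatible cocommutative graded Hopf algebra structures making $\widehat{\psi}$ a Hopf algebra map, and then to identify the primitives on each side by an inductive degree argument.
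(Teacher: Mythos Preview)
The paper does not prove this theorem; it is quoted directly from \cite{quillen} and used as a black box. There is therefore no in-paper argument to compare your proposal against.

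Your proposal is nonetheless a correct proof. One minor point: the statement as printed in the paper has the arrows the wrong way round (it reads ``from $gr_\bullet(\mathbb{F}_p G)$ to $U(L_\bullet(G))$''), whereas the universal property of the restricted enveloping algebra, applied to $\psi: L_\bullet(G) \to gr_\bullet(\mathbb{F}_p G)$, naturally yields a map in the direction you wrote, $U(L_\bullet(G)) \to gr_\bullet(\mathbb{F}_p G)$; this is almost certainly a typo. Your verification that $\psi$ respects brackets and $p$-th powers is standard and correct, surjectivity is as you say, and the dimension count for finite $p$-groups via restricted PBW (namely $\dim_{\mathbb{F}_p} u(L) = p^{\dim L}$, matched against $\dim_{\mathbb{F}_p} gr_\bullet(\mathbb{F}_p G) = |G|$) is the cleanest way to obtain injectivity. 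The reduction of the general case to finite $p$-quotients $G/D_N$ is also sound. As you correctly note, Quillen's own argument proceeds through the cocommutative Hopf algebra structure and an analysis of primitives; your dimension-counting route is more elementary and entirely adequate for the finite groups that are the sole concern of this paper.
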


Now suppose $G$ is an abelian $p$-group of the form $C_{p^{i_1}}\times \dots \times C_{p^{i_m}}$. Then $D_{n} = G^{p^i}$ for $p^i$ the smallest power of $p$ such that $p^i \geq n$. Thus
\[
L_n(G) \cong \begin{cases} \{1\}, &n \neq p^i \\
  C_p\times\cdots \times C_p, &n = p^i.
 \end{cases}
\]

\begin{Lemma}
	If $G$ is abelian then $R(G)\otimes \mathbb{F}_p \cong \mathbb{F}_pG$ through an isomorphism that sends the Grothendieck filtration $\{\Gamma_p^n\}_n$ induced on $R(G)\otimes\Fp$ to the $I$-adic filtration on $\Fp G$.
\end{Lemma}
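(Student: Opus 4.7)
The plan is to exhibit the isomorphism concretely, by first identifying $R(G)$ with $\mathbb{Z}[\hat{G}]$, where $\hat{G}$ denotes the character group of $G$, and then verifying that both the ring structure and the filtration transport correctly to $\mathbb{F}_p G$. Since $G$ is abelian and $\mathbb{K} = \mathbb{C}$, every irreducible complex representation of $G$ is one-dimensional, so the irreducible characters form a $\mathbb{Z}$-basis of $R(G)$, and the tensor product of one-dimensional representations corresponds to pointwise multiplication of characters in $\hat{G}$. This yields a ring isomorphism $R(G) \cong \mathbb{Z}[\hat{G}]$ under which the degree augmentation $\epsilon$ becomes the standard augmentation of the group ring. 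By the structure theorem for finite abelian groups $\hat{G} \cong G$ as abstract groups; choosing any such isomorphism identifies $\mathbb{Z}[\hat{G}]$ with $\mathbb{Z}G$, and tensoring with $\mathbb{F}_p$ yields $R(G) \otimes \mathbb{F}_p \cong \mathbb{F}_p G$, sending the augmentation ideal of $R(G) \otimes \mathbb{F}_p$ onto the augmentation ideal $I$ of $\mathbb{F}_p G$.

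The nontrivial step is to show that in $R(G)$ itself the Grothendieck filtration $\{\Gamma^n\}$ coincides with the $J$-adic filtration, where $J$ denotes the augmentation ideal of $R(G)$. The inclusion $J^n \subseteq \Gamma^n$ is immediate from $\gamma^1(y) = y$ for $y \in J$. For the converse, I will verify that $\gamma^i(y) \in J^i$ for every $y \in J$ and every $i \geq 0$. Each $g \in \hat{G}$ is a line element of $R(G)$, so a direct computation gives $\gamma_T(g-1) = 1 + (g-1)T$; combined with the addition formula $\gamma_T(y+z) = \gamma_T(y)\gamma_T(z)$, this yields $\gamma_T(a(g-1)) = (1+(g-1)T)^a$ for $a \in \mathbb{Z}$, so that $\gamma^i(a(g-1)) = \binom{a}{i}(g-1)^i \in J^i$. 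An arbitrary $y \in J$ can be written as $\sum_g a_g(g-1)$ with $a_g \in \mathbb{Z}$, and the addition formula again gives $\gamma_T(y) = \prod_g (1+(g-1)T)^{a_g}$, whose $T^i$-coefficient is a $\mathbb{Z}$-linear combination of products of $(g-1)$'s of total degree $i$, and so lies in $J^i$. This proves $\Gamma^n = J^n$, an equality preserved by $-\otimes \mathbb{F}_p$; transporting along the ring isomorphism above then identifies $\Gamma_p^n$ with $I^n$ inside $\mathbb{F}_p G$.

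The main obstacle is precisely the equality $\Gamma^n = J^n$ of filtrations: \Cref{gammatopology_coincides_idaictopology} only asserts that the two filtrations induce the same topology on $R(G)$, and for non-abelian $G$ they generally do not agree term by term. The decisive simplification in the abelian case is that $J$ is additively generated by differences $g - 1$ of line elements, on which the $\gamma$-operations admit the explicit closed form displayed above; this description is what collapses the Grothendieck filtration onto the $J$-adic one.
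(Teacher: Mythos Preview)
Your proof is correct and follows essentially the same approach as the paper: identify $R(G)$ with the integral group ring of $\hat G\cong G$, and use that every irreducible character is a line element to collapse the $\Gamma$-filtration onto the augmentation-ideal filtration. The paper compresses your explicit $\gamma_T$ computation into the single sentence ``since every irreducible character of $G$ has dimension $1$, the filtration $\{\Gamma_p^n\}_n$ coincides with the $\Gamma_p^1$-adic filtration,'' but the content is the same.
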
 
\begin{proof}
	Write $G$ as a product of cyclic groups. The isomorphism that sends each cyclic group generator $g$ to the character $\rho_g: g \mapsto e^{2\pi i /|g|}$, sends $I \subset \Fp$ to $\Gamma_p^1$. Since every irreducible character of $G$ has dimension 1, the filtration $\{\Gamma_p^n\}_n$ coincides with the $\Gamma_p^1$-adic filtration.
\end{proof}

So $gr_\bullet(R(G) \otimes \mathbb{F}_p) \cong gr_\bullet(\mathbb{F}_pG) \cong U(L_\bullet(G))$ with universal map
\[
  h: \begin{cases} L_\bullet(G) &\to gr_\bullet(R(G)\otimes\mathbb{F}_p) \\
    g &\mapsto C_1(\rho_g) \ (\Mod \Gamma_p^2)
    \end{cases},
\]
and there is a map $\phi$ of algebras induced by $L_\bullet(G) \to gr_\bullet(\Fp G)$:
\[
	\phi: gr_\bullet(R(G)\otimes \Fp) \to gr_\bullet(\mathbb{F}_pG).
\]
On the other hand, the map $R(G) \to R(G)\otimes \Fp$ preserves the $\Gamma$-filtration and induces a maps $R^*(G) \to gr_\bullet(R(G)\otimes \Fp)$, and thus a map $R^*(G)\otimes \Fp \to gr_\bullet(R(G)\otimes\Fp)$. Composing this latter map with $\phi$, we obtain a map 
\[
	R^*(G)\otimes \Fp \to gr_\bullet(\Fp G)
\]
satisfying $\phi(c_1(\rho_g)) = g-1$.

A straightforward corollary of this is the following:
\begin{Theorem}\label{theorem_quillenrelations}
	Let $G = C_{p^{i_1}}\times\cdots\times C_{p^{i_n}}$, and let $\rho_k$ be the generating character of $R(C_{p^{i_k}})$ sending a generator $g_k$ of $C_{p^{i_k}}$ to $e^{2i\pi/p^{i_k}}$. Then there is a well-defined homomorphism:
	\[
		R^*(G)\otimes\Fp \to \frac{\Fp[u_1,\cdots, u_n]}{(u_1^{p^{i_1}}, \cdots, u_n^{p^{i_n}} )} 
	\]
	sending $c_1(\rho_k)$ to $u_k$.  
\end{Theorem}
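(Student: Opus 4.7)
The plan is to identify the target $gr_\bullet(\Fp G)$ of the map constructed just before the theorem with the polynomial quotient $\Fp[u_1,\ldots,u_n]/(u_1^{p^{i_1}},\ldots,u_n^{p^{i_n}})$ in the statement; the preceding paragraph already produces a homomorphism $R^*(G)\otimes\Fp \to gr_\bullet(\Fp G)$ sending $c_1(\rho_{g_k}) = c_1(\rho_k)$ to the class of $g_k - 1$, so once the target is identified, the composition is exactly the morphism in the statement. By Quillen's isomorphism $gr_\bullet(\Fp G)\cong U(L_\bullet(G))$, the task reduces to computing the restricted universal enveloping algebra of $L_\bullet(G)$.

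Since $G$ is abelian the Lie bracket on $L_\bullet(G)$ vanishes, so its whole structure is carried by the restricted $p$-th power map. From the formula $D_m = G^{p^j}$ (where $p^j$ is the least $p$-th power at least $m$), the graded piece $L_m(G)$ is nonzero only when $m = p^j$, in which case $L_{p^j}(G) = G^{p^j}/G^{p^{j+1}}$ is an $\Fp$-vector space with basis the classes of those $g_k^{p^j}$ for which $j < i_k$. The $[p]$-operation sends the class of $g_k^{p^j}\in L_{p^j}$ to the class of $g_k^{p^{j+1}}\in L_{p^{j+1}}$, and this vanishes precisely when $j+1 = i_k$.

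By the Poincar\'e--Birkhoff--Witt theorem for restricted Lie algebras, $U(L_\bullet(G))$ is freely generated as a commutative algebra by these basis vectors $v_{k,j}$ (the class of $g_k^{p^j}$, placed in degree $p^j$) subject only to the restricted-power relations $v_{k,j}^p = v_{k,j+1}$ for $0\leq j < i_k - 1$ and $v_{k,i_k-1}^p = 0$. Iteratively substituting out $v_{k,j}$ for $j \geq 1$ collapses this presentation to $\Fp[u_1,\ldots,u_n]/(u_1^{p^{i_1}},\ldots,u_n^{p^{i_n}})$ with $u_k := v_{k,0}$; under the Quillen isomorphism $u_k$ corresponds to the class of $g_k - 1$ in $I/I^2\subset gr_\bullet(\Fp G)$, which is exactly the image of $c_1(\rho_k)$ under the map of the preceding paragraph.

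The only real work is the restricted-enveloping-algebra bookkeeping in the second and third paragraphs. I do not foresee a genuine obstacle, since all the representation-theoretic content (the construction of the map into $gr_\bullet(\Fp G)$, compatibility of filtrations, and Quillen's theorem itself) has already been set up above the statement; what remains is simply to unwind a presentation of $U(L_\bullet(G))$ for an elementary abelian-with-$p$-power-structure Lie algebra.
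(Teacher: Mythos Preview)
Your proposal is correct and follows the approach the paper intends: the paper states the theorem as a ``straightforward corollary'' of the preceding construction of $R^*(G)\otimes\Fp \to gr_\bullet(\Fp G)$ and gives no further argument, so you have simply filled in the computation of $gr_\bullet(\Fp G)\cong U(L_\bullet(G))$ that the paper leaves implicit. Your restricted-PBW bookkeeping is sound (in the abelian case the $[p]$-map is $\Fp$-linear, so the enveloping ideal is generated by the relations on basis vectors, and the substitution $v_{k,j}=u_k^{p^j}$ collapses everything as you describe).

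One small remark: an even shorter route, bypassing $U(L_\bullet(G))$ entirely, is to compute $gr_\bullet(\Fp G)$ directly. Writing $\Fp G \cong \bigotimes_k \Fp[T_k]/(T_k^{p^{i_k}}-1)$ and setting $u_k = T_k-1$, one has $(u_k+1)^{p^{i_k}}-1 = u_k^{p^{i_k}}$ in characteristic $p$, so $\Fp G \cong \Fp[u_1,\ldots,u_n]/(u_k^{p^{i_k}})$ with augmentation ideal $(u_1,\ldots,u_n)$; since the relations are already homogeneous, the associated graded ring is the same quotient. This avoids invoking restricted PBW, though your route is the one naturally suggested by the paper's setup via Quillen's theorem.
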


Although we do not directly refer to it in the sequel, \Cref{theorem_quillenrelations} proves useful when "guessing" relations in $R^*(G)$, as illustrated by the graded character rings of abelian 2-groups: in \Cref{Z4Z4}, we show that
\[
			R^*(C_4 \times C_4) = \frac{\ZZ[x,y]}{(4x,4y,2x^2y+2xy^2, x^4y^2 - x^2y^4)}
\]
with $x = c_1(\rho_{(1,0)})$, $y = c_1(\rho_{(0,1)})$. By \Cref{theorem_quillenrelations}, modulo 2, nontrivial relations must involve $x^4$ or $y^4$. Since one can easily rule out relations of the form $x^4, y^4 = 0$ by restriction to $C_4$, we know that any extra relation will occur in degree 5 or more. Here, it occurs in degree 6.
Again, in \Cref{Z4Z2}, we show
\[
	R^*(C_4 \times C_2) = \frac{\ZZ[x,y]}{(4x,2y,xy^3+x^2y^2)}
\]
with $x = c_1(\rho_{(1,0)})$, $y = c_1(\rho_{(0,1)})$. We know by \Cref{theorem_quillenrelations} that any nontrivial relation modulo $2$ must involve $x^2$ or $y^4$.

	 \section{Continuity of characters} \label{continuity}

In the sequel, we view $R(G)$ as a topological ring, with the topology induced by the filtration $\lbrace \Gamma^n \rbrace$; that is, a subset $U \subseteq R(G)$ is open if for any $x\in U$ there is a $t$ such that $x + \Gamma^t \subseteq U$.
	If $G$ has exponent $m$, then each conjugacy class representative $g \in G$ gives rise to a ring morphism:
	\[
		\phi_g : \begin{cases} R(G) \rightarrow \ZZ[\mu_m] \\
			\ \ \rho \mapsto \chi_\rho(g) \end{cases},
	\]
	where $\mu_m$ is a choice of primitive $m$-th root of unity. We are interested in continuity and density questions with respect to $p$-adic topologies on $\ZZ$. Note that, to make any kind of rigorous statement, we need to fix an extension of the $p$-adic valuation to $\ZZ[\mu_m]$. However, we are primarily interested in the case where $m$ is a power of $p$; in that case, as is well-known, there is only one such extension. In particular, \Cref{proposition_pgroupscontinuous} states that whever $G$ is a $p$-group, all evaluation morphisms are continuous. We apply this result in \Cref{Q8_gradedring} to the computation of $R^*(Q_8)$. \par
	Suppose we are given additive groups $\widetilde{\Gamma}^n \subseteq \Gamma^n$ $(n\geq 1)$ such that:
	\begin{enumerate}[A.]
		\item $\widetilde{\Gamma}^{n+1} \subseteq \widetilde{\Gamma}^n$ \label{admissiblefiltration1}
		\item $\Gamma^n = \widetilde{\Gamma}^n + \Gamma^{n+1}$ \label{admissiblefiltration2}
	\end{enumerate}
	(think of $\widetilde{\Gamma}^n$ as an approximation of $\Gamma^n$). Then by an immediate induction:
	\begin{Lemma}
		For all $k\in\mathbb{N}$,
			\[
				\Gamma^n = \widetilde{\Gamma}^n + \Gamma^{n+k}
			\] \qed
	\end{Lemma}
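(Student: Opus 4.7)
The plan is to proceed by induction on $k$, the hypotheses A and B being tailored precisely for this. The base case $k=1$ is exactly property B, so no work is required there.

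For the inductive step, suppose the equality $\Gamma^n = \widetilde{\Gamma}^n + \Gamma^{n+k}$ has been established for some $k \geq 1$. I would then apply property B at level $n+k$ to write $\Gamma^{n+k} = \widetilde{\Gamma}^{n+k} + \Gamma^{n+k+1}$, and use property A iteratively to note that $\widetilde{\Gamma}^{n+k} \subseteq \widetilde{\Gamma}^n$. Substituting this back into the inductive hypothesis gives
\[
    \Gamma^n = \widetilde{\Gamma}^n + \Gamma^{n+k} \subseteq \widetilde{\Gamma}^n + \widetilde{\Gamma}^{n+k} + \Gamma^{n+k+1} \subseteq \widetilde{\Gamma}^n + \Gamma^{n+k+1}.
\]
The reverse inclusion is immediate since both $\widetilde{\Gamma}^n \subseteq \Gamma^n$ and $\Gamma^{n+k+1} \subseteq \Gamma^n$.

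There is no real obstacle here; the argument is purely formal and the hypotheses are designed to make it go through in one line of induction. The only point that deserves a moment of attention is ensuring one invokes property A the right number of times (namely $k$ times) to bring $\widetilde{\Gamma}^{n+k}$ back down to $\widetilde{\Gamma}^n$, but this is a triviality.
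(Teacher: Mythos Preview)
Your proof is correct and is exactly the ``immediate induction'' the paper has in mind; the paper does not even spell it out, simply stating the lemma with a \qed. There is nothing to add.
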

	
	Call $\{\widetilde{\Gamma}^n \}_n$ an \textit{admissible approximation} for $\{\Gamma^n\}_n$ if it satisfies conditions (\ref{admissiblefiltration1}) and (\ref{admissiblefiltration2}).	
	\begin{Remark}
		Whenever $\{\widetilde{\Gamma}^n\}$ is an admissible approximation, each $\widetilde{\Gamma}^n$ is dense in $\Gamma^n$ for the $\Gamma$-topology.
	\end{Remark}

	\begin{Proposition}\label{evaluation_approximation}
		Let $p$ be a prime number, and suppose the evaluation morphisms 
		\[
			\phi_1,\cdots \phi_k : R(G) \mapsto \ZZ[\mu_m]		
		\]
		are continuous with respect to the topology induced by the filtration $\lbrace \Gamma^n \rbrace$ on $R(G)$, and the $p$-adic topology on $\ZZ[\mu_m]$. Then for all $x \in \Gamma^n$, and for all $M>0$, there is an element $\widetilde{x} \in \widetilde{\Gamma}^n$ such that for all $i = 1,\cdots,k$:
		\[
			\begin{cases} v_p(\phi_i(\widetilde{x})) =  v_p(\phi_i(x)) \text{ whenever } v_p(\phi_i(x)) < +\infty \\
			v_p(\phi_i(\widetilde{x}))>M \text{ whenever } \phi_i(x) = +\infty  			
			\end{cases}
		\]
	\end{Proposition}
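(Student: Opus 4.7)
The plan is to leverage the continuity of each $\phi_i$ to convert the decomposition $\Gamma^n = \widetilde{\Gamma}^n + \Gamma^T$ (valid for every $T \geq n$ by the preceding lemma) into a valuation-preserving approximation of $x$. The central observation is that if the error term $y = x - \widetilde{x}$ lies deep enough in the $\Gamma$-filtration, then each $\phi_i(y)$ is $p$-adically negligible compared to $\phi_i(x)$, so the ultrametric inequality forces $v_p(\phi_i(\widetilde{x})) = v_p(\phi_i(x))$ exactly when the latter is finite, and forces $v_p(\phi_i(\widetilde{x}))$ to be as large as we please otherwise.

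Concretely, I would first fix a common cutoff $T$ handling all $k$ evaluations at once. For each index $i$, set
\[
  N_i = \begin{cases} v_p(\phi_i(x)) + 1 & \text{if } v_p(\phi_i(x)) < +\infty, \\ M + 1 & \text{if } \phi_i(x) = 0. \end{cases}
\]
The set $\{z \in \ZZ[\mu_m] : v_p(z) \geq N_i\}$ is an open neighborhood of $0$ in the $p$-adic topology (non-archimedean balls are open), so by continuity of $\phi_i$ and the fact that $\{\Gamma^t\}_t$ is a neighborhood basis of $0$ in $R(G)$, there exists an integer $T_i$ with $\phi_i(\Gamma^{T_i}) \subseteq \{z : v_p(z) \geq N_i\}$. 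Put $T = \max_i T_i$, and invoke the preceding lemma to write $x = \widetilde{x} + y$ with $\widetilde{x} \in \widetilde{\Gamma}^n$ and $y \in \Gamma^T$.

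The last step is to read off the valuations from $\phi_i(\widetilde{x}) = \phi_i(x) - \phi_i(y)$. When $v_p(\phi_i(x)) < +\infty$, we have $v_p(\phi_i(y)) \geq N_i > v_p(\phi_i(x))$, and strict inequality between the two valuations forces $v_p(\phi_i(\widetilde{x})) = v_p(\phi_i(x))$ by the standard ultrametric principle. When $\phi_i(x) = 0$, we simply get $\phi_i(\widetilde{x}) = -\phi_i(y)$, hence $v_p(\phi_i(\widetilde{x})) \geq N_i > M$. In both cases $\widetilde{x}$ meets the required conditions.

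I do not anticipate a genuine obstacle: the argument essentially repackages continuity and admissibility. The one point deserving care is producing a \emph{single} cutoff $T$ that simultaneously controls all $k$ evaluations and treats both the finite-valuation case and the vanishing case uniformly, which is handled by taking the maximum of the individual $T_i$'s.
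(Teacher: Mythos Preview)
Your argument is correct and follows essentially the same route as the paper: pick a large enough index in the $\Gamma$-filtration so that each $\phi_i$ sends it past the relevant threshold, then use the admissible-approximation lemma to split $x$ accordingly and conclude via the ultrametric inequality. The only cosmetic difference is that the paper chooses a single threshold $\max\bigl(\max_{v_p(\phi_i(x))<\infty} v_p(\phi_i(x)),\,M\bigr)$ up front, whereas you pick per-index thresholds $N_i$ and then take $T=\max_i T_i$; you may also want to replace $T$ by $\max(T,n)$ so that the decomposition $\Gamma^n=\widetilde{\Gamma}^n+\Gamma^T$ from the lemma literally applies, but this is harmless.
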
 
	\begin{proof}
		
		Let $x \in \Gamma^n$, $M>0$. Since all the $\phi_i$ are continuous with respect to the $p$-adic topology, there exists $N$ such that for all $j$ and for all $y \in \Gamma^N$ we have
		\[
			v_p(\phi_j(y)) > \max\left( \max_{v_p(\phi_i(x)) < \infty} \ v_p(\phi_i(x)), \ M \right).
		\]
		We can then write $x = \widetilde{x}+r$ with $\widetilde{x}\in\widetilde{\Gamma}^n$ and $r \in \Gamma^N$.
	\end{proof}
	
	\begin{Proposition}\label{proposition_pgroupscontinuous}
		Let $G$ be a $p$-group. Then the morphisms $\phi_g$, for $g \in G$, are all continuous with respect to the $p$-adic topology on $\ZZ[\mu_m]$.
	\end{Proposition}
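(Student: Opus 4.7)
The plan is to reduce to the $I$-adic topology and exploit cyclotomic ramification at $p$. By \Cref{gammatopology_coincides_idaictopology}, the $\Gamma$-topology on $R(G)$ coincides with the $I$-adic topology, so it suffices to show that $\phi_g(I^n) \to 0$ in the $p$-adic topology on $\ZZ[\mu_m]$ as $n \to \infty$. Since $\phi_g$ is a ring homomorphism and $I^n$ is the $n$-th power of $I$, the containment will follow from showing $\phi_g(I) \subseteq \mathfrak{m}$, where $\mathfrak{m}$ denotes the unique maximal ideal of $\ZZ[\mu_m]$ lying over $p$ (uniqueness is ensured because $m = p^\ell$ is a prime power, as the paper already notes).

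First I would note that $I$ is additively generated by the virtual characters $\rho - \dim\rho$ for $\rho$ a character of $G$. Evaluating,
\[
\phi_g(\rho - \dim\rho) = \chi_\rho(g) - \dim\rho = \sum_i(\zeta_i - 1),
\]
where the $\zeta_i$ are the eigenvalues of $\rho(g)$, each a $p^k$-th root of unity (with $p^k = \ord(g)$). The key ingredient is the classical fact that for $\zeta$ a $p$-power root of unity, $\zeta - 1$ lies in $\mathfrak{m}$; indeed $p$ is totally ramified in $\ZZ[\mu_{p^\ell}]$ with $(1-\zeta)$ as uniformizer and normalized valuation $v_p(1-\zeta) = 1/(p^{\ell-1}(p-1))$. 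Hence each summand $\zeta_i - 1$ lies in $\mathfrak{m}$, and so $\phi_g(\rho - \dim\rho) \in \mathfrak{m}$, giving $\phi_g(I) \subseteq \mathfrak{m}$.

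Combining these observations, $\phi_g(I^n) \subseteq \mathfrak{m}^n$, and therefore
\[
v_p(\phi_g(x)) \geq \frac{n}{p^{\ell-1}(p-1)} \quad\text{for every } x \in I^n,
\]
which tends to $+\infty$. This is exactly continuity of $\phi_g$ at $0$, and hence continuity everywhere. There is no real obstacle: the whole argument rests on the single ramification statement that cyclotomic units of $p$-power order are congruent to $1$ modulo the prime above $p$. The only subtlety worth flagging is why the hypothesis that $G$ is a $p$-group is essential: for $q \mid |G|$ with $q \neq p$, a primitive $q$-th root of unity $\zeta_q$ satisfies $v_p(\zeta_q - 1) = 0$, so $\phi_g(I)$ need not lie in $\mathfrak{m}$ and continuity generally fails.
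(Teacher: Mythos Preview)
Your proof is correct and follows essentially the same approach as the paper: reduce to the $I$-adic topology via \Cref{gammatopology_coincides_idaictopology}, then show $\phi_g(I)\subseteq\mathfrak{m}$ by writing $\chi_\rho(g)-\dim\rho$ as a sum of terms $\zeta-1$ with $\zeta$ a $p$-power root of unity. Your version is somewhat more explicit about the ramification and the resulting lower bound on $v_p$, and your closing remark on why the $p$-group hypothesis is necessary is a useful addition not present in the paper.
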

	\begin{proof}
		Fix an element $g \in G$ and let $|G| =: p^n$. By \Cref{gammatopology_coincides_idaictopology}, it suffices to show that $\phi_g$ is continuous with respect to the $I$-adic topology on the left. We show that for any irreducible character $\rho$ of $G$,
		\[
			v_p\left(\phi_g\left(\rho - \varepsilon\left(\rho\right)\right)\right) > 0,
		\]
		which implies continuity. Since $G$ is a $p$-group, every character is a sum of $p^n$-th roots of unity, so
		\begin{align*}
			\phi_g(\rho-\varepsilon(\rho)) &= \rho(g) - \varepsilon(\rho)	\\
				&= \sum_{l = 1}^{\varepsilon(\rho)}(\mu_{p^n}^{i_l} - 1),
		\end{align*}
		and each $(\mu_{p^n}^{i_l} - 1)$ has positive $p$-valuation.
	\end{proof}
			
		The continuity method allows us to solve questions of torsion and nilpotency in $R^*(G)$: if some element $x \in R(G)$ is contained in $\Gamma^M$ with $M$ large, then $\phi_g(x)$ must be divisible by a large power of $p$. Here is a concrete example: let $G = Q_8 = \langle \ i,j,k \ \vert \ i^2 = j^2 = k^2 = ijk \ \rangle$ be the quaternion group of order $8$.
	\begin{Theorem}	 \label{Q8_gradedring}
		\[
		R^*(Q_8) = \frac{\ZZ[x,y,u]}{(2x,2y,8u,x^2,y^2, xy-4u)}
		\]
		where $\vert x \vert = \vert y \vert = 1$ and $\vert u \vert = 2$, with explicit generators as described in \Cref{Q8_relations}.
	\end{Theorem}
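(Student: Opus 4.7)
The plan is to follow the template used for $D_4$ in \Cref{gradedring_D4} to exhibit generators and relations, and then to invoke \Cref{proposition_pgroupscontinuous} together with \Cref{evaluation_approximation} to replace the step where, for $D_4$, the calculation of $R^*(D_4)\otimes \FF_2$ was imported from the literature. Since $R(Q_8)$ is generated as a ring by $\rho, \eta$ and the faithful two-dimensional $\Delta$, and $Q_8$ embeds in $\mathrm{SU}(2)$ so that $\det \Delta = 1$ and \Cref{chern_class_determinant} forces $c_1(\Delta) = 0$, \Cref{R*G_generators} ensures that $R^*(Q_8)$ is generated by $x = c_1(\rho)$, $y = c_1(\eta)$ and $u = c_2(\Delta)$.

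The identities $\rho^2 = \eta^2 = 1$ give $2x = 2y = 0$. Rewriting $\rho\Delta = \Delta$ as $(\rho-1)(\Delta-2) = -2(\rho-1)$ and noting that $\rho - 1 \in \Gamma^1$ while $\Delta - 2 \in \Gamma^2$ (since $c_1(\Delta)=0$), the left-hand side lies in $\Gamma^3$, whence $2(\rho-1) \in \Gamma^3$; combined with $(\rho-1)^2 = -2(\rho-1)$ coming from $\rho^2 = 1$, this yields $x^2 = 0$, and symmetrically $y^2 = 0$. To get $xy = 4u$, I would apply $c_T$ to $\Delta^2 = 1+\rho+\eta+\rho\eta$: on the right, expansion and the relations already established collapse $c_2$ to $xy$; on the left, the splitting principle writes $\Delta = \alpha+\beta$ with $c_1(\alpha)+c_1(\beta)=0$ and $c_1(\alpha)c_1(\beta) = u$, so $c_T(\Delta^2) = (1+2aT)(1-2aT) = 1 + 4uT^2$. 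Finally, $8u = 0$ follows by multiplying $xy = 4u$ by $2$.

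These relations produce a surjection from $\ZZ[x,y,u]/(2x,2y,8u,x^2,y^2,xy-4u)$ onto $R^*(Q_8)$. The presented ring has degree-$2k$ part cyclic of order at most $8$ on $u^k$, and degree-$2k+1$ part at most $\ZZ/2 \cdot xu^k \oplus \ZZ/2 \cdot yu^k$. Restricting to the cyclic subgroups $\langle i\rangle$ and $\langle j\rangle$, which by \Cref{gradedring_Cp} have graded rings $\ZZ[z]/(4z)$, sends $(x,y,u) \mapsto (0, 2z, -z^2)$ and $(2z, 0, -z^2)$ respectively: this shows $x$ and $y$ are linearly independent of order exactly $2$, that $xu^k$ and $yu^k$ are linearly independent of order exactly $2$, and that $u^k$ has additive order at least $4$.

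The main obstacle is to show that $u^k$ is not in fact of order $4$ but of order $8$, something restriction to cyclic subgroups cannot detect. For this I would apply \Cref{evaluation_approximation} with the admissible approximation $\widetilde{\Gamma}^n$ spanned additively by the monomials $X^a Y^b U^c$ with $a+b+2c \geq n$ (admissibility holds because $R^*(Q_8)$ is generated by $x,y,u$), together with the continuity of every $\phi_g$ supplied by \Cref{proposition_pgroupscontinuous}. Supposing $4U^k \in \Gamma^{2k+1}$, I would approximate $4U^k = \widetilde z + r$ with $\widetilde z = \alpha U^{k+1} + \beta YU^k + \gamma XU^k + (\text{higher-valuation terms})$ and $r$ of arbitrarily high $2$-adic valuation under every evaluation. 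Computing $\phi_{-1}(X,Y,U) = (0,0,4)$, $\phi_i(X,Y,U) = (0,-2,2)$, $\phi_j(X,Y,U) = (-2,0,2)$ and $\phi_k(X,Y,U) = (-2,-2,2)$ and then matching leading $2$-adic valuations of $\phi_g(4U^k)$ forces $\alpha, \beta, \gamma$ each to be odd (via $\phi_{-1}, \phi_i, \phi_j$ respectively), while the $\phi_k$ equation forces $\alpha - \beta - \gamma$ to be even---a contradiction. Hence $u^k$ has order exactly $8$ and the presentation is complete.
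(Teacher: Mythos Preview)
Your argument is correct and follows essentially the paper's approach: establish the relations, then use the continuity machinery of \Cref{evaluation_approximation} and \Cref{proposition_pgroupscontinuous} with an admissible approximation to pin down the additive order of $u^k$. A few remarks on where you diverge from (or slightly improve upon) the paper's presentation:

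Your derivation of $x^2=0$ via $(\rho-1)(\Delta-2)=-2(\rho-1)$ and $\Delta-2\in\Gamma^2$ is a clean alternative to the paper's splitting-principle computation of $c_T(\rho\Delta)$, and obtaining $8u=0$ from $2\cdot(xy=4u)$ is a pleasant shortcut over invoking \Cref{R*G_G-torsion}. Using restriction to $\langle i\rangle$ and $\langle j\rangle$ to settle the odd-degree pieces and to get $\operatorname{ord}(u^k)\ge 4$ is a good idea; the paper instead repeats the continuity argument for $xu^n$, $yu^n$.

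For the order-$8$ step, the paper works with the \emph{smaller} admissible approximation spanned by $X^{\epsilon_1}Y^{\epsilon_2}U^c$ with $\epsilon_1+\epsilon_2\le 1$, and uses only $\phi_i,\phi_j,\phi_k$, arriving at a $3\times 3$ linear system over $\FF_2$ with only the trivial solution. Your larger approximation still works because one checks that the only monomials in $\widetilde{\Gamma}^{2k+1}$ of minimal $2$-valuation at each of $\phi_{-1},\phi_i,\phi_j,\phi_k$ are among $U^{k+1}$, $XU^k$, $YU^k$; adding $\phi_{-1}$ is a genuine simplification, since it forces $\alpha$ odd outright and the contradiction $\alpha\equiv\beta+\gamma\pmod 2$ is immediate.

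One phrasing point: $\phi_i$ alone does not force $\beta$ odd; it forces $\alpha\equiv\beta\pmod 2$ (the $2^{k+1}$-coefficient at $\phi_i$ is $\alpha-\beta$), and you then use $\alpha$ odd from $\phi_{-1}$. Likewise for $\gamma$ via $\phi_j$. With that adjustment your contradiction is exactly $\alpha-\beta-\gamma\equiv 0$ with $\alpha,\beta,\gamma$ all odd.
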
 
	We will use continuity of characters to show that the additive order of $u$ is $8$ .\\
	The group $Q_8$ has 5 conjugacy classes: $\lbrace 1\rbrace$, $\lbrace -1\rbrace$, $\lbrace \pm i\rbrace$, $\lbrace \pm j\rbrace$, $\lbrace \pm k\rbrace$ so 5 irreducible representations on $\CC$. They are as follows:
	\begin{itemize}
		\item In dimension 1, the trivial representation, and the characters $\rho_1: \begin{cases} i \mapsto 1 \\ j\mapsto -1 \end{cases}$, $\rho_2 = -\rho_1$ and $\rho_3 = \rho_1\rho_2$,
		\item and in dimension 2, the representation $\Delta$:
		\begin{equation*}
		 \Delta(i) = \begin{pmatrix} i & 0 \\ 0 & -i \end{pmatrix}, \ \ \ \Delta(j) = \begin{pmatrix} 0 & -1 \\ 1 & 0 \end{pmatrix}, \ \ \ \Delta(k) = \begin{pmatrix} 0 & -i \\ -i & 0 \end{pmatrix} \begin{matrix} \vphantom{1} \\ \vphantom{0}. \end{matrix}
		\end{equation*}
	\end{itemize}
	These representations satisfy the relations
	\begin{align}
		\rho_i^2 &= 1	\\
		\rho_3 &= \rho_1\rho_2 \\
		\Delta\rho_i &= \Delta  \label{Q8_relationdelta} \\
		\Delta^2 &= 1 + \rho_1 + \rho_2 + \rho_3.
	\end{align}
	
	Let us first take a look at the generators and relations of $Q_8$:
	\begin{Lemma} \label{Q8_relations}
		The graded ring $R^*(Q_8)$ is generated by the elements 
		\[
			x := c_1(\rho_1), \ \ \ \ y := c_1(\rho_2), \ \ \ \ u := c_2(\Delta),
		\]
		which satisfy the relations in \Cref{Q8_gradedring}, that is:
		\[
			2x = 2y = 8u = 0 ,  \ \ \ \ x^2 = y^2 = 0, \ \ \ \ xy = 4u.
		\]
	\end{Lemma}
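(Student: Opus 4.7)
First I would establish that $\{x, y, u\}$ is a generating set. Since $\rho_3 = \rho_1\rho_2$, the ring $R(Q_8)$ is multiplicatively generated by $\rho_1, \rho_2, \Delta$, and \Cref{R*G_generators} then ensures $R^*(Q_8)$ is generated by $c_1(\rho_1)$, $c_1(\rho_2)$, $c_1(\Delta)$, $c_2(\Delta)$. A direct determinant computation on the matrices $\Delta(i), \Delta(j), \Delta(k)$ shows that $\det\Delta$ is the trivial character, so by \Cref{chern_class_determinant}, $c_1(\Delta) = c_1(\det\Delta) = 0$. Only $x, y, u$ remain.

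The order relations are quick. Since $\rho_i^2 = 1$, \Cref{chern_class_homomorphism}(\ref{chern_class_product}) gives $2x = c_1(\rho_1^2) = c_1(1) = 0$ and symmetrically $2y = 0$, while $8u = 0$ follows from \Cref{R*G_G-torsion}, as $|Q_8|=8$.

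For the polynomial relations, the plan is to push the representation-theoretic identities $\Delta \rho_1 = \Delta = \Delta \rho_2$ and $\Delta^2 = 1 + \rho_1 + \rho_2 + \rho_3$ through the total Chern class via the splitting principle (\Cref{splittingprinciple}). Write $\Delta = \alpha + \beta$ in a suitable $\lambda$-ring extension and set $a := c_1(\alpha)$, $b := c_1(\beta)$, so that $a + b = c_1(\Delta) = 0$ and $ab = u$. Then
\[
c_T(\Delta \rho_1) = (1 + (a + x)T)(1 + (b + x)T),
\]
which, after invoking $a + b = 0$ and $2x = 0$, collapses to $1 + (u + x^2)T^2$. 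Comparing with $c_T(\Delta) = 1 + uT^2$ yields $x^2 = 0$; symmetrically $y^2 = 0$. For $xy = 4u$, expand the right-hand side of $\Delta^2 = 1 + \rho_1 + \rho_2 + \rho_3$ using $c_1(\rho_3) = c_1(\rho_1) + c_1(\rho_2) = x + y$:
\[
c_T(1 + \rho_1 + \rho_2 + \rho_3) = (1 + xT)(1 + yT)(1 + (x+y)T),
\]
whose coefficient of $T^2$ reduces to $xy$ once $x^2 = y^2 = 0$ and $2xy = 0$ are applied. On the other side, the splitting principle yields $c_T(\Delta^2) = (1 + 2aT)(1 + (a+b)T)^2(1 + 2bT) = 1 + 4uT^2$, forcing $xy = 4u$.

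Once the right identities are chosen, no individual step is deep; the Lemma is a bookkeeping exercise in the splitting principle. The genuine obstacle in the broader theorem lies \emph{beyond} this Lemma: showing that the relations listed are exhaustive, and in particular that the additive order of $u$ is exactly $8$ rather than a proper divisor. That is precisely where the continuity of evaluation morphisms $\phi_g$ developed earlier in the section is intended to come in.
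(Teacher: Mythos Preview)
Your proof is correct and follows essentially the same approach as the paper: the generating set is reduced via \Cref{chern_class_determinant} and $\det\Delta = 1$, the torsion relations come from \Cref{chern_class_homomorphism}(\ref{chern_class_product}) and \Cref{R*G_G-torsion}, and both $x^2 = y^2 = 0$ and $xy = 4u$ are extracted by applying the total Chern class, via the splitting principle, to $\Delta\rho_i = \Delta$ and $\Delta^2 = 1 + \rho_1 + \rho_2 + \rho_3$ respectively. The only cosmetic difference is that you simplify with $a + b = c_1(\Delta) = 0$ before expanding $c_T(\Delta^2)$, whereas the paper expands first and substitutes afterward; your closing remark about where the real difficulty lies is also on target.
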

	\begin{proof}
		First, we eliminate the redundant generators: $c_1(\rho_3) = c_1(\rho_1\rho_2) = c_1(\rho_1) + c_1(\rho_2)$, and $c_1(\Delta) = c_1(\det (\Delta) ) = c_1(1) = 0$. So $R^*(Q_8)$ is indeed generated by $x,y$ and $u$. \\
		Now since $\rho_1^2 = \rho_2^2 = 1$, we have $2x = 2y = 0$, and the order of $Q_8$ kills $R^*(Q_8)$ so $8u = 0$. For the relations in degree 2, we apply the total Chern class to \Cref{Q8_relationdelta}, splitting the 2-dimensional representation $\Delta$ into $\sigma_1+\sigma_2$. On the left-hand side we have:
		\begin{align*}
			c_t(\Delta\rho_i) =& c_t(\sigma_1\rho_i)c_t(\sigma_2\rho_i) \\
				=& 1 + \left[ c_1(\sigma_1) + c_1(\sigma_2) + 2c_1(\rho_i) \right]T \\
					&+ \left[ c_1(\sigma_1)c_1(\sigma_2) + c_1(\sigma_1)c_1(\rho_i) + c_1(\sigma_2)c_1(\rho_i) + c_1(\rho_i)^2 \right]T^2
		\end{align*}
		While on the right-hand side:
		\begin{align*}
			c_t(\Delta) = 1 + \left[c_1(\sigma_1)+c_1(\sigma_2)\right]T + \left[c_1(\sigma_1)c_1(\sigma_2)\right]T^2.
		\end{align*}
		In degree 2, this yields the relation:
		\begin{align*}
			c_1(\rho_i)(c_1(\sigma_1)+c_1(\sigma_2)) + c_1(\rho_i)^2 = 0.
		\end{align*}
		Bearing in mind that $c_1(\sigma_1)+c_1(\sigma_2) = c_1(\Delta) = 0$, we obtain $x^2 = y^2 = 0$.
		The relation $xy = 4u$ is obtained by applying $c_t$ to the relation $\Delta^2 = 1 + \rho_1 + \rho_2 + \rho_1\rho_2$ and identifying the terms in degree 2, which yields
		\begin{align*}
			5c_1(\Delta)^2+4u = x^2 + y^2 + 3xy,
		\end{align*}
		that is, $4u = xy$.
	\end{proof}
	
	What remains to be proven is that these are the only relations satisfied by the generators; thus we want to check that we have no extra nilpotency or torsion conditions on $u$, and that the products $xu^i$ and $yu^i$ are nonzero for any $i$. For this, we look at the $2$-valuation of the characters of $Q_8$: we define an admissible approximation $(\widetilde{\Gamma}^n)$ that only takes into accounts some generators. This allows us to restrict ourselves when we compute the $2$-valuations of our evaluation morphisms, which we use to extract information about torsion in $R^*(G)$. 
	
	Let $X, Y, U$ be standard lifts of $x,y,u$. We consider the approximation $\{\widetilde{\Gamma}^n\}$, where $\widetilde{\Gamma}^n$ is the additive subgroup of $(R(G), +)$ generated by
	\begin{equation}
	X^{\epsilon_1}Y^{\epsilon_2}U^k, \text{ with } 2k+ \epsilon_1 + \epsilon_2 \geq n \text{ and } 0 \leq \epsilon_1+\epsilon_2 \leq 1. 
	\end{equation}
	
	\begin{Lemma}
		The approximation $(\widetilde{\Gamma}^n)$ is admissible.
	\end{Lemma}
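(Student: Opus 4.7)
The plan is to verify the two defining conditions of an admissible approximation in turn. Condition (A) is immediate: if $2k + \epsilon_1 + \epsilon_2 \geq n+1$ then a fortiori $2k + \epsilon_1 + \epsilon_2 \geq n$, so every generator of $\widetilde{\Gamma}^{n+1}$ is already a generator of $\widetilde{\Gamma}^n$, whence $\widetilde{\Gamma}^{n+1} \subseteq \widetilde{\Gamma}^n$.

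For condition (B), I would reformulate the equality $\Gamma^n = \widetilde{\Gamma}^n + \Gamma^{n+1}$ as the statement that the composite $\widetilde{\Gamma}^n \hookrightarrow \Gamma^n \twoheadrightarrow R^n(Q_8)$ is surjective. By \Cref{R*G_generators} applied to the generating characters $\rho_1, \rho_2, \Delta$ of $R(Q_8)$ -- together with $c_1(\Delta) = 0$, which follows from \Cref{chern_class_determinant} and $\det \Delta = 1$ -- the graded ring $R^*(Q_8)$ is generated by $x, y, u$. Hence each $R^n(Q_8)$ is spanned as an abelian group by monomials $x^a y^b u^c$ with $a + b + 2c = n$. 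Using the relations $x^2 = y^2 = 0$ and $xy = 4u$ from \Cref{Q8_relations}, every such monomial collapses to a $\ZZ$-multiple of $u^k$ (when $n = 2k$), or of $xu^k$ or $yu^k$ (when $n = 2k+1$) -- precisely the images of the generators of $\widetilde{\Gamma}^n$ of minimal filtration degree.

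The final check is that this reduction, which a priori holds in $R^*(Q_8)$, actually lifts to $R(Q_8)$ modulo $\Gamma^{n+1}$. The relations $x^2 = y^2 = 0$ and $xy = 4u$ in $R^2(Q_8)$ translate to $X^2, Y^2 \in \Gamma^3$ and $XY - 4U \in \Gamma^3$. Multiplying each such congruence by $U^j \in \Gamma^{2j}$ yields a congruence modulo $\Gamma^{2j+3}$, and by induction on $a + b$ this allows any product $X^a Y^b U^k$ with $a + b + 2k = n$ to be rewritten as a $\ZZ$-multiple of a generator of $\widetilde{\Gamma}^n$ plus an element of $\Gamma^{n+1}$. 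I do not foresee any real obstacle here: this is essentially a careful bookkeeping exercise, and the relations in \Cref{Q8_relations} carry exactly the filtration-degree precision required to close the induction.
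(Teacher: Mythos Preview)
Your proof is correct and follows essentially the same approach as the paper: both reduce arbitrary monomial generators to the form $U^k$, $XU^k$, $YU^k$ modulo $\Gamma^{n+1}$ using the relations $x^2 = y^2 = 0$ and $xy = 4u$ from \Cref{Q8_relations} (the paper works directly in $R(Q_8)$ with the lifts $X,Y,Z,T,U$, while you pass to the quotient $R^n(Q_8)$). Your reformulation of condition~(B) as surjectivity of $\widetilde{\Gamma}^n \hookrightarrow \Gamma^n \twoheadrightarrow R^n(Q_8)$ is a clean framing, and in fact renders your final paragraph redundant: once $R^n(Q_8)$ is spanned by $u^k$ (resp.\ $xu^k, yu^k$), and these are visibly the images of the minimal-degree generators of $\widetilde{\Gamma}^n$, surjectivity is established and there is nothing further to ``lift''.
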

	\begin{proof}
		Obviously we have $\widetilde{\Gamma}^{n+1} \subset \widetilde{\Gamma}^n$, so $(\widetilde{\Gamma}^n)$ satisfies condition  (\ref{admissiblefiltration1}). To check (\ref{admissiblefiltration2}), let $Z = \rho_3-1$ and $T = \Delta -2$ be lifts of $c_1(\rho_3)$, $c_1(\Delta)$, respectively. Let $\alpha$ be an additive generator for $\Gamma^n$. We know that $\Gamma^n$ is generated by products of Chern classes of the generating characters of $R(G)$, hence $\alpha$ is of the form:
		\[
		\alpha = X^{\epsilon_1}Y^{\epsilon_2}Z^{\epsilon_3}U^kT^l \in \Gamma^n, \ \ \ \ 2k+l+\epsilon_1+\epsilon_2+\epsilon_3 \geq n.
		\]
		If $\epsilon_{1} \geq 2$, then $\alpha$ contains a factor $X^2$, but the relation $x^2 = 0$ implies that $X^2 \in \Gamma^3$, so in that case $\alpha \in \Gamma^{n+1}$. The same goes for $\epsilon_2$, so we can restrict ourselves to monomials such that $\epsilon_1+\epsilon_2 \leq 1$. Similarly, since $c_1(\Delta) = 0$ we have $T \in \Gamma^2$, which means that $l \neq 0$ forces $\alpha \in \Gamma^{n+1}$. We proceed similarly for all factors and obtain
		\[
		\Gamma^n = \widetilde{\Gamma}^n + \Gamma^{n+1},
		\]
		so $(\widetilde{\Gamma}^n)$ satisfies condition (\ref{admissiblefiltration2}). 
	\end{proof}
	 There are 4 nontrivial evaluation morphisms on $R_\CC(Q_8)$:
	 \begin{itemize}
	 	\item $\phi_{-1}: \rho_i \mapsto 1, \Delta \mapsto -2$,
	 	\item $\phi_i: \rho_1 \mapsto 1, \ \ \ \rho_2,\rho_3 \mapsto -1, \ \ \ \Delta \mapsto 0$,
	 	\item $\phi_j: \rho_2 \mapsto 1, \ \ \ \rho_1,\rho_3 \mapsto -1, \ \ \ \Delta \mapsto 0$,
	 	\item $\phi_k: \rho_3 \mapsto 1, \ \ \ \rho_1, \rho_2 \mapsto -1, \ \ \ \Delta \mapsto 0$. 
	 \end{itemize}
	We apply those to our $X,Y,T,U$ and obtain:
	\begin{itemize}
		\item $\phi_{-1}: X,Y \mapsto 0, \ \ \ T \mapsto -4, \ \ \ U \mapsto 4$,
		\item $\phi_i: X \mapsto 0, \ \ \ Y,T \mapsto -2, \ \ \ U \mapsto 2$,
		\item $\phi_j: X, T \mapsto -2,  \ \ \ Y \mapsto 0, \ \ \ T \mapsto 2$,
		\item $\phi_k: X,Y,T \mapsto -2, \ \ \ T \mapsto 2$.
	\end{itemize}
	
	It is easy to check that, as stated in \Cref{proposition_pgroupscontinuous}, these morphisms are all continuous with respect to the $2$-adic topology. We can now wrap up the computation:
	\begin{proof}[Proof of \Cref{Q8_gradedring}.]
		We want to show that $R^{2n}(G) = \langle u^n \rangle = \ZZ/8\ZZ$, and $R^{2n+1}(G) = \langle xu^n, yu^n \rangle = (\ZZ/2\ZZ)^2$. \\
		We first look at $R^{2n}(G)$, where we need to show that $4u^n \neq 0$, that is, $4U^n \notin \Gamma^{2n+1}$. We have $\phi_g(4U^n) = 2^{n+2}$, for any $g = i,j,k$. Suppose that $4U^n \in \Gamma^{2n+1}$; then by \Cref{evaluation_approximation} there is an element $\widetilde{X} \in \widetilde{\Gamma}^{2n+1}$ satisfying
			\[
				n+2 = v_2(\phi_g(4U^n)) = v_2(\phi_g(\widetilde{X})). 
			\]
		Write 
			\begin{align*}
				\widetilde{X} =& a_1U^{n+1}+a_2XU^{n}+a_3YU^{n} \\
					&+ U^{n+2}P(X,Y,U) + XU^{n+1}Q(X,Y,U) + YU^{n+1}S(X,Y,U)
			\end{align*}
		where the $a_i$'s are integers and $P,Q,S$ are polynomials with integer coefficients. Apply $\phi_g$ for $g = i,j,k$ to this equation, divide each equation by $2^{n+1}$ and consider the result mod $2$. We obtain a system of three equations: 
		\begin{align*}
			0 &= a_1 + a_3 \\
			0 &= a_1 + a_2 \\
			0 &= a_1 + a_2 + a_3
		\end{align*}
		which has no nontrivial solution. But if $a_m = 0 (\mathrm{mod} \ 2)$ for $m=1,2,3$ then $v_2(\phi_g(\widetilde{x})) > n+2$, which is impossible. Thus $4U^n$ cannot be in $\Gamma^{2n+1}$, and the additive order of $u^m$ is indeed 8 for all $m$. \\
		The same process shows that $xu^n, yu^n$ and $xu^n+yu^n$ are nonzero elements of $R^{2n+1}(Q_8)$ for all $n$.
	\end{proof}

	\section{Abelian $2$-groups}\label{abelian2groups}
Abelian $2$-groups are a rich source of examples; we present below the computations of $R^*_\CC(C_4\times C_2)$ and $R^*_\CC(C_4\times C_4)$. To determine the graded rings in this section, we used all of the tools presented in this paper. \cref{theorem_quillenrelations} gives us a starting point to guess relations in the graded character rings, which we then determine precisely using basic virtual character algebra. Once we have a good candidate for $R^*(G)$, we discard as many extra relations as possible by restricting to various subgroups, and get rid of the last ones by looking at continuity of characters.
\begin{Remark}
	In the sequel, we denote the evaluation of a virtual character $X$ at $g \in G$ by $X\vert_{g}$ rather than $\phi_g(X)$. Besides, it will be convenient to use an additive notation throughout, so we denote the cyclic groups by $\ZZ/4$ and $\ZZ/2$.
\end{Remark}

	\begin{Proposition} \label{Z4Z2}
		\[
			R^*(\ZZ/4 \times \ZZ/2) = \frac{\ZZ[x,y]}{(4x,2y,xy^3+x^2y^2)}
		\]
		with $|x| = |y| = 1$.
	\end{Proposition}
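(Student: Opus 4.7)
My plan is to follow the three-step pattern used throughout the paper. First, since $R(G) \cong R(\ZZ/4) \otimes R(\ZZ/2)$ is generated as a ring by the one-dimensional characters $\rho_1, \rho_2$ pulled back from the two factors, \Cref{R*G_generators} immediately gives that $R^*(G)$ is generated in degree $1$ by $x = c_1(\rho_1)$ and $y = c_1(\rho_2)$; the relations $4x = 0$ and $2y = 0$ follow from $\rho_1^4 = \rho_2^2 = 1$ combined with \Cref{chern_class_homomorphism}(\ref{chern_class_product}).

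For the non-obvious relation $xy^3 + x^2y^2 = 0$, I would lift to the standard lifts $X = \rho_1 - 1$ and $Y = \rho_2 - 1$ in $\Gamma^1$. The identity $(Y+1)^2 = 1$ gives $Y^2 = -2Y$, so $XY^3 = 4XY$ and $X^2Y^2 = -2X^2Y$, which combine to
\[
XY^3 + X^2Y^2 \;=\; 2XY(2-X) \;=\; 4XY - 2X^2Y.
\]
Multiplying $(X+1)^4 - 1 = 0$ by $Y$ and using $4X \in \Gamma^2$, $X^2Y \in \Gamma^3$, $X^4Y \in \Gamma^5$ shows $4XY + 6X^2Y \in \Gamma^5$; the same estimates give $8X \in \Gamma^3$ and hence $8X^2Y \in \Gamma^5$. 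Writing $2XY(2-X) = (4XY + 6X^2Y) - 8X^2Y$ then places it in $\Gamma^5$, so it vanishes in $R^4(G)$.

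The main obstacle is ruling out any further relations. Iterating the relation just produced shows $x^a y^b = \pm\, x^{a+b-2} y^2$ for $a \geq 1, b \geq 2$, so for each $n \geq 4$ the group $R^n(G)$ is spanned by the four monomials $x^n, x^{n-1}y, x^{n-2}y^2, y^n$ (lower-degree pieces are handled similarly). To prove any hypothetical $\ZZ$-linear relation among these must have all coefficients in the expected torsion orders $(4, 2, 2, 2)$, I would use \Cref{R*G_functorial} with four restriction maps: to $\langle(1,0)\rangle \cong \ZZ/4$ (sending $(x, y) \mapsto (t, 0)$ with $t$ the generator of $R^*(\ZZ/4) = \ZZ[t]/(4t)$ from \Cref{gradedring_Cp}, detecting $x^n$ with full additive order); to $\langle(0,1)\rangle \cong \ZZ/2$ (detecting $y^n$); to the diagonal $\langle(1,1)\rangle \cong \ZZ/4$ (sending $(x, y) \mapsto (t, 2t)$, so $x^{n-1}y \mapsto 2t^n \neq 0$ while $x^{n-2}y^2 \mapsto 4t^n = 0$, isolating $x^{n-1}y$); and to $\langle(2,0),(0,1)\rangle \cong (\ZZ/2)^2$ (using the presentation of $R^*((\ZZ/2)^2)$ from \Cref{gradedrepring_Cpk} to detect the remaining mixed class $x^{n-2}y^2$). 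Together these four restrictions force every coefficient to vanish; as an independent sanity check, \Cref{theorem_quillenrelations} constrains any non-trivial mod-$2$ relation to have each monomial divisible by $x^4$ or $y^2$, which matches the shape $xy^2(x + y)$ of the relation we derived.
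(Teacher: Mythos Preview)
Your argument is correct, and it is genuinely simpler than the route taken in the paper. The paper derives the relation $xy^3+x^2y^2=0$ in essentially the same way and then, for the ``no extra relations'' step, explicitly asserts that the possibilities $x^{n-1}y=0$, $x^{n-2}y^2=0$, and $x^{n-1}y=x^{n-2}y^2$ \emph{cannot} be ruled out by restrictions; it therefore sets up an admissible approximation $\widetilde{\Gamma}^n=\langle X^n,Y^n,X^{n-1}Y,X^{n-2}Y^2\rangle$ and invokes the continuity machinery of \Cref{evaluation_approximation} and \Cref{proposition_pgroupscontinuous}, evaluating at $(2,1)$ and $(1,1)$ and comparing $2$-adic valuations. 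Your observation that restriction to the diagonal $\langle(1,1)\rangle\cong\ZZ/4$ sends $(x,y)\mapsto(t,2t)$, so that $x^{n-1}y\mapsto 2t^n\neq 0$ while $x^{n-2}y^2\mapsto 4t^n=0$, kills both $x^{n-1}y=0$ and $x^{n-1}y=x^{n-2}y^2$ in one stroke; and restriction to the Klein subgroup $\langle(2,0),(0,1)\rangle$, combined with \Cref{gradedrepring_Cpk} for $p=2$, $k=2$, disposes of $x^{n-2}y^2=0$ since $u^{n-2}v^2=u^{n-1}v\neq 0$ there. So you obtain the result using only \Cref{R*G_functorial} and the already-computed rings $R^*(\ZZ/4)$ and $R^*((\ZZ/2)^2)$, bypassing the continuity method entirely. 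The paper's approach, while heavier here, has the advantage of illustrating a technique that becomes indispensable in the next example $\ZZ/4\times\ZZ/4$, where restrictions alone no longer suffice.
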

	\begin{proof}
			Let $\rho$ be the generating character of $R(\ZZ/4)$ sending $1$ to $i$, and $\sigma$ the nontrivial representation of $\ZZ/2$. Then $R^*(G)$ is generated by $c_1(\rho) =: x$ and $c_1(\sigma) =: y$. By restriction to cyclic subgroups, we see that $x$ has additive order $4$ and $y$, additive order $2$. Now consider $X = \rho - 1$ and $Y = \sigma - 1$. Then by expanding $(X+1)^4 = 1$ we get
		\[
			4X = -(6X^2 + 4X^3 + X^4)
		\]
		On the other hand, $Y^n = (-2)^{n-1}Y$, so 
		\[
			XY^3 = 4XY = -X^4Y - 4X^3Y - 6X^2Y = -X^4Y - X^3Y^3 + 3X^2Y^2. 
		\]
		Modulo $\Gamma^5$, this is $xy^3 = x^2y^2$. The only other possible extra relations (that cannot be ruled out by restrictions to various subgroups) are: $x^{n-1}y = 0$, $x^{n-2}y^2 = 0$ or $x^{n-1}y = x^{n-2}y^2$ for some $n$. We use the continuity method to disprove all of these. Let $\widetilde{\Gamma}^n = \langle X^n, Y^n, X^{n-1}Y, X^{n-2}Y^2 \rangle$, then $\{ \widetilde{\Gamma}^n \}_n$ is an admissible approximation for $\{\Gamma^n\}_n$.\par
		First suppose that $x^{n-1}y = 0$ for some $n$, that is $X^{n-1}Y \in \Gamma^{n+1}$. Then for $N$ arbitrarily large, there exists $\widetilde{Z} \in \widetilde{\Gamma}^{n+1}$ such that $X^{n-1}Y = \widetilde{Z}+R$ with $R \in \Gamma^N$; in particular, by \Cref{evaluation_approximation}, for any $M>0$, there is an $N$ such that: 
		\[
			 \begin{cases} v_2\left(\widetilde{Z}\vert_{(k,\ell)}\right) = v_2\left(X^{n-1}Y\vert_{(k,\ell)}\right) \text{ whenever } v_2\left(X^{n-1}Y\vert_{(k,\ell)}\right) < \infty \\
			 	 v_2\left(\widetilde{Z}\vert_{(k,\ell)}\right) > M \text{ whenever } v_2\left(X^{n-2}Y^2\vert_{(k,\ell)}\right) = \infty
			 \end{cases},
		\]
		for all $(k,\ell) \in \ZZ/4\times\ZZ/2$.\par
		Write:
		\begin{align*}
			\widetilde{Z} =& a\cdot X^{n+1} + b\cdot X^{n}Y + c\cdot X^{n-1}Y^2 + d\cdot Y^{n+1} \\
				&+ P\cdot X^{n+2} + Q\cdot X^{n+1}Y + S\cdot X^{n}Y^2 + T\cdot Y^{n+2}
		\end{align*}
		where $a,b,c,d \in \ZZ$ and $P,Q,S,T \in \ZZ[X,Y]$. Evaluatin at $(2,1)$ gives $X\vert_{(2,1)} = Y\vert_{(2,1)} = -2$. Then the $2$-valuation of $X^{n-1}Y$ is $n$ while the $2$-valuation of $\widetilde{Z}$ is at least $n+1$. This shows that such a $\widetilde{Z}$ cannot exist, and thus $x^{n-1}y$ cannot be zero. A similar argument shows that $x^{n-2}y^2 \neq 0$.\par
		The only possible remaining relation is $x^{n-1}y = x^{n-2}y^2$. Let $Z = X^{n-1}Y + X^{n-2}Y^2$, and suppose $Z\in \Gamma^{n+1}$. Fix a large number $M>n+2$, and let $\widetilde{Z} \in \widetilde{\Gamma}^{n+1}$ and $R \in \Gamma^N$ satisfy $Z = \widetilde{Z} + R$ with the usual conditions on the valuation of $\widetilde{X}$. Then:
		\[
			\widetilde{Z}\vert_{(2,0)} = a\cdot(-2)^{n+1} + P\cdot (-2)^{n+2} 
		\]
		while $Z\vert_{(2,0)} = 0$, so $v_2(Z\vert_{(2,0)}) = +\infty$. In this case, we have $v_2(\widetilde{Z})>M>n+2$. This means in particular that $a = 0 \ (\Mod 2)$, hence $a = 2a'$ for some $a'$. \par
		We now evaluate at $(1,1)$:
		\[
			Z\vert_{(1,1)} = (i-1)^{n-2}\cdot 4 + (i-1)^{n-1}\cdot (-2).
		\]
		thus $v_2(Z\vert_{(1,1)}) = (n+1)/2$. On the other hand:
		\[
			\widetilde{Z}\vert_{(1,1)} = a'\cdot 2\cdot(i-1)^{n+1} + b\cdot(i-1)^{n}(-2) + c\cdot(i-1)^{n-1}4 + d\cdot(-2)^{n+1} + \widetilde{R}
		\]
		where $v_2(\widetilde{R}) \geq \frac{n+2}{2}$. We see that $v_2(\widetilde{Z}\vert_{(1,1)}) \geq (n+2)/2$, so we cannot have $v_2(\widetilde{Z}\vert_{(1,1)}) = \widetilde{Z}\vert_{(1,1)}$, in contradiction with our assumption. This means that $Z \notin \Gamma^{n+1}$, and thus  $x^{n-1}y \neq x^{n-2}y^2$. This completes the proof.
	\end{proof}
	
	\begin{Remark}
		Let $G = \ZZ/2^{n}\times \ZZ/2$. Then one can show, as above, that there is in $R^*(G)$ a relation of the form $ xy^{n+1} + x^2y^n = 0$. For $n=3$, it is possible to adapt the argument above and show that $R^*(G) = \frac{\ZZ[x,y]}{(8x, 2y, xy^{4} + x^2y^3)}$. Is it true in general that
		\[
			R^*(\ZZ/2^n \times \ZZ/2) = \frac{\ZZ[x,y]}{(2^nx, 2y, xy^{n+1} + x^2y^n)}?
		\]
	\end{Remark}
		
	\begin{Proposition} \label{Z4Z4}
		\[
			R^*(\ZZ/4 \times \ZZ/4) = \frac{\ZZ[x,y]}{(4x,4y,2x^2y+2xy^2, x^4y^2 - x^2y^4)}
		\]
		with $|x| = |y| = 1$
	\end{Proposition}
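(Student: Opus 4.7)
The plan is to reuse the strategy of \Cref{Z4Z2}: identify generators, derive the listed relations by direct character-ring algebra, and then prove there are no others by combining subgroup restrictions, Quillen's comparison from \Cref{theorem_quillenrelations}, and the continuity of characters method of \Cref{continuity}. Let $\rho_1, \rho_2$ be the characters of $G = \ZZ/4 \times \ZZ/4$ sending the generator of, respectively, the first and second factor to $i \in \CC$. Since $\rho_1, \rho_2$ generate $R(G)$, \Cref{R*G_generators} ensures that $x := c_1(\rho_1)$ and $y := c_1(\rho_2)$ generate $R^*(G)$.

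The relations $4x = 4y = 0$ follow from $\rho_i^4 = 1$ and \Cref{chern_class_homomorphism}(\ref{chern_class_product}). Setting $X = \rho_1 - 1$ and $Y = \rho_2 - 1$, the identity $4X = -6X^2 - 4X^3 - X^4$ from $(X+1)^4 = 1$, combined with its $Y$-analogue, gives two expansions of $4XY$; equating them modulo $\Gamma^4$ and using $4x^2y = 4xy^2 = 0$ in $R^3$ yields $2x^2y = 2xy^2$, i.e.\ $2x^2y + 2xy^2 = 0$. The degree-$6$ relation $x^4y^2 = x^2y^4$ is obtained by a deeper variant of the same manipulation: multiplying $X^4 + 4X^3 + 6X^2 + 4X = 0$ by $Y^2$ and the analogous $Y$-identity by $X^2$, subtracting, and iteratively applying $4X^n \equiv -6X^{n+1}$, $4Y^n \equiv -6Y^{n+1}$ at successively deeper filtration levels, together with the already-established degree-$3$ relation, shows that $X^4Y^2 - X^2Y^4 \in \Gamma^7$.

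To see these relations are exhaustive, let $A := \ZZ[x,y]/(4x, 4y, 2x^2y + 2xy^2, x^4y^2 - x^2y^4)$; we must show the surjection $A \to R^*(G)$ is injective. Restriction to the two coordinate copies of $\ZZ/4$, together with the diagonal and anti-diagonal embeddings $\ZZ/4 \hookrightarrow G$ (under which $(x, y)$ maps to $(z, 0), (0, z), (z, z), (z, 3z)$ respectively in $R^*(\ZZ/4) = \ZZ[z]/(4z)$, using \Cref{gradedring_Cp}), accounts for the additive order of the pure powers $x^n, y^n$ and of most mixed monomials. \Cref{theorem_quillenrelations} provides a surjection $R^*(G) \otimes \FF_2 \to \FF_2[u,v]/(u^4,v^4)$ sending $x \mapsto u$, $y \mapsto v$, so any remaining mod-$2$ relation must lie in the ideal $(x^4, y^4)$; combined with a monomial count in $A \otimes \FF_2 = \FF_2[x,y]/(x^4y^2 + x^2y^4)$, this pins down the mod-$2$ structure of $R^*(G)$.

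The main obstacle is ruling out residual $\ZZ/2$-torsion relations that neither restriction nor the Quillen map detects (for instance, possible equalities $2x^ay^b = 0$ for $a, b \geq 1$ not already forced by $2x^2y + 2xy^2 = 0$). Here we use continuity of characters as in \Cref{continuity}: introduce an admissible approximation $\widetilde{\Gamma}^n \subseteq \Gamma^n$ spanned, modulo the four relations of $A$, by a minimal reduced family of monomials in $X$ and $Y$ in each degree, and apply \Cref{evaluation_approximation} together with \Cref{proposition_pgroupscontinuous} at carefully chosen elements such as $(1,1), (1,2), (2,1), (1,3) \in G$. Each evaluation yields a congruence on the coefficients modulo a power of $2$; the resulting system, whose matrix contains a Vandermonde-like block, admits only the trivial solution, ruling out the putative extra relation. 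The main technical difficulty is bookkeeping: $\widetilde{\Gamma}^n$ contains more standard monomials than in the $\ZZ/4 \times \ZZ/2$ case, and we have four nontrivial group elements at which to evaluate, so the linear systems are correspondingly larger, but their structure remains the same.
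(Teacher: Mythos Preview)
Your plan follows essentially the same strategy as the paper: establish the four relations by direct manipulation in $R(G)$, then exclude further relations by combining restrictions to cyclic subgroups with the continuity/valuation method of \Cref{continuity}. Two remarks are in order.

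First, a gap: your claim that the Quillen map of \Cref{theorem_quillenrelations}, ``combined with a monomial count in $A\otimes\FF_2$, pins down the mod-$2$ structure of $R^*(G)$'' is an overstatement. The target $\FF_2[u,v]/(u^4,v^4)$ is finite-dimensional (and vanishes in degrees $\geq 7$), while $A\otimes\FF_2=\FF_2[x,y]/(x^4y^2+x^2y^4)$ is not, so in high degrees the Quillen map has a large kernel and no dimension comparison can close the argument. The paper in fact does not invoke \Cref{theorem_quillenrelations} in the proof at all; it writes a general degree-$n$ element on the six-monomial spanning set $\{x^n,\,x^{n-1}y,\,x^{n-2}y^2,\,x^{n-3}y^3,\,xy^{n-1},\,y^n\}$ (with the appropriate coefficient ranges) and eliminates the coefficients one by one. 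This is not fatal to your plan, since the restrictions and continuity evaluations you list are precisely what carry the weight and do suffice, but you should drop the Quillen step rather than rely on it.

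Second, a harmless variation: after restrictions to the two factors and the diagonal, together with evaluations at $(1,2)$ and $(2,1)$, the paper is left with a single surviving candidate $z=2x^{n-1}y+x^{n-2}y^2+x^{n-3}y^3$ and kills it via the automorphism $\tau\colon y\mapsto -y$, observing that $z+\tau^*z=2x^{n-2}y^2\neq 0$. Your anti-diagonal restriction $(x,y)\mapsto(z,-z)$ achieves the same thing more directly, sending this candidate to $2z^n\neq 0$ in $R^*(\ZZ/4)$; the two devices are equivalent.
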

	\begin{proof}
		Existence of relations: let $X, Y$ be the usual lifts of $x,y$. We use that $(X+1)^4 = 1$, that is,
		\begin{equation}\label{Z4Z4relationX}
			X^4 = -4X - 6X^2 -4X^3
		\end{equation}
		Then: 
		\begin{align*}
			X^4Y &= (-4X - 6X^2 - 4X^3)Y = -4XY - 6X^2Y -4X^3Y \\
				&= X(6Y^2 + 4Y^3 + Y^4) - 6X^2Y -4X^3Y\\
			 	6XY^2 - 6X^2Y &= X^4Y - XY^4 +4X^3Y -4XY^3 \numberthis \label{Z4Z4relationdegree3}\\
				0 &= 2XY^2 + 2X^2Y (\Mod \Gamma^4) 
		\end{align*}
		so $2x^2y = 2xy^2$. By repeatedly applying \Cref{Z4Z4relationX} to $X^6Y - XY^6$, one obtains:
		\begin{equation*}
			X^6Y - XY^6 = 14X^2Y^3 - 14X^3Y^2 + 11X^2Y^4 - 11X^4Y^2 + 5X^3Y^4 - 5X^4Y^3. 
		\end{equation*}
		The expression $14X^2Y^3 - 14X^3Y^2$ is the sum of $8X^2Y^3 - 8X^3Y^2$ which belongs to $\Gamma^7$, and $6X^2Y^3 - 6X^3Y^2$ which is simply \Cref{Z4Z4relationdegree3} multiplied by $XY$, and thus also belongs to $\Gamma^7$. Thus $x^4y^2 = x^2y^4$.\\
		To show there are no extra relations, let $a_0, \cdots a_n \in \ZZ, n\geq 2$ satisfy: 
			\[
				z = a_0x^n + a_1x^{n-1}y + a_2x^{n-2}y^2 + a_3x^{n-3}y^3 + a_{n-1}xy^{n-1} + a_ny^n = 0
			\]
		or in other words,
			\begin{align*}
				Z &:= a_0X^n + a_1X^{n-1}Y + a_2X^{n-2}Y^2 + a_3X^{n-3}Y^3 + a_{n-1}XY^{n-1} + a_nY^n \\
					&= \sum_{k = 0}^{n+1}b_kX^{n+1-k}Y^k + \sum_{k = 0}^{n+2}P_k(X,Y)X^{n+2-k}Y^k \in \Gamma^{n+1}. \numberthis \label{Z4Z4_polynomial}
			\end{align*}
			Suppose, without loss of generality, that $a_0, a_1, a_n, b_0, b_1, b_n \in \{0,1,2,3\}$ and $a_2, a_3, a_{n-1}, b_2, \cdots, b_{n-1} \in \{0,1\}$ while $P_k(X,Y) \in \ZZ[X,Y]$. Let $\widetilde{Z}$ be the right hand side of the equation, and consider the restriction of \Cref{Z4Z4_polynomial} to the following subgroups:
			\begin{itemize}
				\item To $\ZZ/4\times 1$: then \Cref{Z4Z4_polynomial} becomes $a_0X^n = b_0X^{n+1} +P_0(X,0)X^{n+2} \in \Gamma^{n+1}$. This implies that $a_0 = b_0 = 0(\Mod 4)$, and since we had assumed that $a_0,b_0 \in \{0,1,2,3\}$ we have $a_0 = b_0 = 0$.
				\item To $1\times\ZZ/4$: similarly we obtain $a_n = b_n = 0$.
				\item To $\langle (1,1) \rangle \cong \ZZ/4$: the generators $X$ and $Y$ both restrict to the generator $T$ and we obtain $\sum a_kT^n = 0 (\Mod \Gamma^{n+1})$, so 
				\begin{equation}\label{Z4Z4congruence}
					a_1+a_2+a_3+a_{n-1} = 0 (\Mod 4).
				\end{equation}
			\end{itemize}
			Evaluating at $(1,2)$ yields:
			\begin{align*}
				v_2(\phi_{(1,2)}(Z)) &\geq \min\left(v_2(a_1) + \frac{n+1}{2}, v_2(a_2) + \frac{n+2}{2}, v_2(a_3) + \frac{n+3}{2}, v_2(a_{n-1}) + \frac{2n-1}{2}\right) \numberthis \label{Z4Z4valuation} \\
				v_2(\phi_{(1,2)}(\tilde{Z})) &\geq \frac{n+2}{2}
			\end{align*}
			with equality if there is a strict minimum in \Cref{Z4Z4valuation}, so we must have $v_2(a_1) \geq 1$, that is $a_1 = 0$ or $a_1 = 2$. By evaluating at $(2,1)$ instead, one shows that $v_2(a_{n-1}) \geq 1$ and thus $a_{n-1} = 0$, since we had assumed $a_{n-1} \in \{0,1\}$. Our equation becomes $a_1+a_2+a_3 = 0 (\Mod 4)$. If $a_1 = 0$ then $a_2 =a_3 = 0$, so we can assume $a_1 = 2$, and then $a_2 = a_3 = 1$. Thus $Z = 2X^{n-1}Y + X^{n-2}Y^2 + X^{n-3}Y^3$. To rule out this last possibility, consider the automorphism $\tau$ of $G$, which leaves $(1,0)$ invariant and sends $(0,1) $ to $(0,3)$. This induces a well-defined map $\tau^*$ of graded rings, which maps $y$ to $-y$. Then
			\[
				\tau^*z = -2x^{n-1}y + x^{n-2}y^2 - x^{n-3}y^3
			\]
			and summing $z + \tau^* z$ we obtain $2x^{n-2}y= 0$, which is impossible since all monomials in $R^*(G)$ have additive order $4$. Thus the assumption $a_0 = 2$ is wrong, and this concludes the proof.
	\end{proof}

\bibliographystyle{alpha}

\bibliography{bibliography}

\end{document}